\documentclass[a4paper,10pt]{article}
\usepackage[utf8]{inputenc}
\usepackage{LatexDefinitions}
\usepackage{LatexDefinitions}
\usepackage{amssymb}
\usepackage{caption}
\usepackage{dsfont}

\title{Entropic transfer operators for stochastic systems}
\author{Hancheng Bi, Clément Sarrazin, Bernhard Schmitzer, Thilo D. Stier}
\begin{document}
\maketitle
\begin{abstract}
  Dynamical systems can be analyzed via their Frobenius--Perron transfer operator and its estimation from data is an active field of research. 
  Recently entropic transfer operators have been introduced to estimate the operator of deterministic systems. The approach is based on the regularizing properties of entropic optimal transport plans.
  In this article we generalize the method to stochastic and non-stationary systems and give a quantitative convergence analysis of the empirical operator as the available samples increase. We introduce a way to extend the operator's eigenfunctions to previously unseen samples, such that they can be efficiently included into a spectral embedding.
  The practicality and numerical scalability of the method are demonstrated on a real-world fluid dynamics experiment.
\end{abstract}

\section{Introduction} \label{sec:intro}
\subsection{Motivation and related work} \label{sec:motivation}
\paragraph{Dynamical systems and transfer operators.}
A time-discrete stochastic dynamical system can be described by a state space $\SpX$ and a transition kernel $(\kappa_x)_{x \in \SpX}$ where the probability measure $\kappa_x \in \prob(\SpX)$ gives the conditional distribution for the state $x_{t+1} \in \SpX$ at time $t+1$, conditioned on the observation that $x_t=x$.
Time-continuous systems can be captured in this description by integrating the corresponding stochastic differential equation over a (small) time interval $\tau>0$.
Dynamical systems are a versatile modelling tool and can be used to describe population dynamics \cite{ZhaoDynSys2017}, molecular dynamics \cite{SchuetteMilestoning2011}, chemical reaction networks \cite{SingerPNAS2009}, fluid dynamics \cite{Koltai_2020}, meteorology \cite{Froyland2021Climate}, and many other phenomena.

Systems of interest are often chaotic, stochastic, and high-dimensional.
Therefore, even if individual trajectories $(x_t)_t$ can be measured or simulated with high precision, it is difficult to obtain a structured understanding of the system's behaviour by direct inspection of such data, due to the sensitivity on the starting point, stochasticity, and high dimension.
Instead, one usually looks for a coarse-grained description, e.g.~by identifying metastable states, or low-dimensional effective coordinates that capture the dynamics on slower time scales.
One ansatz for obtaining such descriptions is via the \emph{transfer operator} $T : \prob(\SpX) \to \prob(\SpX)$, that describes how an ensemble of particles evolves. For instance, if the particles at discrete time $t$ are distributed according to some probability measure $\mu_t \in \prob(\SpX)$, then at time $t+1$ they will be distributed according to $\mu_{t+1} \assign T \mu_t$. For a transition kernel $(\kappa_x)_x$ as mentioned above, $T$ would formally be characterized by
\begin{align*}
  \int_{\SpX} \phi(y)\, \diff \mu_{t+1}(y) = \int_{\SpX} \left[ \int_{\SpX} \phi(y) \, \diff \kappa_x(y) \right] \diff \mu_t(x)
\end{align*}
for continuous test functions $\phi \in \Cont(\SpX)$.
$T$ is a linear operator so we can make use of methods of functional analysis to study it. Often, the restriction of $T$ to subsets of $\prob(\SpX)$ is analyzed, such as probability measures with densities in $\Leb^p(\mu)$ with respect to some reference measure $\mu$. The adjoint of $T$ is called the \emph{Koopman operator} $K=T^*$.\footnote{This adjoint is conventionally taken with respect to the pairing $(L^1(\mu),L^\infty(\mu))$ where $T$ is interpreted as operator on $L^1(\mu)$ \cite[Section 3]{LasotaMackey}, but depending on context, other pairings such as $(L^2(\mu),L^2(\mu))$ or Radon measures and bounded continuous functions may be appropriate.}
Spectral analysis of $T$, e.g.~by eigendecomposition can yield information about the long-term behaviour of the underlying dynamical system, and a corresponding coarse approximate description through spectral embedding \cite{Coifman2006}.

\paragraph{Approximation by compact operators and from discrete data.}
Often, an analytic description of $T$ is not available or not tractable for direct analysis. This difficulty may be exacerbated when $T$ is not a compact operator.
It is therefore important to find compact approximations $T^\veps$ of $T$, where $\veps>0$ is some regularization parameter. A common strategy is to consider a small stochastic perturbation of $T^\veps$, e.g.~by composing it with a small blur step \cite{DelJun1999,Froyland2013Coherent}.
Subsequently one seeks a discrete approximation $T_N^\veps$ of $T^\veps$ based on empirical or simulated data, where $N$ denotes the amount of available samples $(x_i,y_i)_{i=1}^N$.

Usually the $x_i$ are assumed to be independently identically distributed (i.i.d.) random variables with law $\mu \in \prob(\SpX)$ and $y_i$ are corresponding states observed after one time step, i.e.~the law of $y_i$ conditioned on $x_i=x$ is given by $\kappa_{x}$.
One can then interpret the pairs $(x_i,y_i)$ as i.i.d.~random variables with law $\pi \in \prob(\SpX \times \SpX)$ where $(\kappa_{x})_x$ is the disintegration of $\pi$ with respect to its first marginal (which equals $\mu$). The second marginal of $\pi$, which is the law of $y_i$ when not conditioned on $x_i$, is given by $\nu \assign T \mu$.
Specifying the measure $\pi$ is enough to fix $T$ as an operator from $\Leb^p(\mu)$ to $\Leb^p(\nu)$ (see \Cref{prop:inducedTO}).
Alternatively, one could consider data gathered from a single long trajectory of random variables $(z_t)_{t=1}^{N+1}$ where the law of $z_{t+1}$ conditioned on $z_t=x$ is given by $\kappa_x$.
When the system is ergodic, the i.i.d.~assumption in the former option is still a good approximation for the latter case if one interprets $(z_{t},z_{t+1})$ as a pair $(x_t,y_t)$ for $t=1,\ldots,N$.

In Ulam's celebrated and prototypical method \cite{Ulam60,li1976finite}, a finite partition $(\SpX_i)_i$ of $\SpX$ is introduced and the observed transitions of data points between partition cells offer a discrete approximation of the Markov kernel $(\kappa_x)_x$ at the level of the cells. The literature on approximating and analyzing $T$ from data is vast and we refer to \cite{DelJun1999,Koopmanism2012,Froyland2013Coherent,KluKolSch2016,TransferOperatorApproxReview18} or the monograph \cite{eisner2015operator} and references therein for exemplary starting points and \cite{BittracherTransitionManifolds2018,BittracherKernelTransitionManifolds21,BittracherRC2021,YangOTDynSys2021} for snapshots of recent developments.

\paragraph{Entropic transfer operators.}
In \cite{EntropicTransfer22} \emph{entropic transfer operators} were introduced. These can be interpreted as a partition-free variant of Ulam's method that works directly on the sample point cloud and mitigates discretization artefacts by a blurring kernel that is generated via entropic optimal transport.
\cite{EntropicTransfer22} considers time-discrete \emph{deterministic} dynamical systems where the state $x_{t+1}$ is given as $F(x_t)$ for a continuous evolution map $F : \SpX \to \SpX$. 
The transfer operator $T$ is then given by the push-forward $T\mu= F_{\#} \mu$, which is linear but not generally a compact operator. Given an invariant measure $\mu$ of $T$, i.e.~$\mu=T \mu$, \cite{EntropicTransfer22} then considers the restriction of $T$ to $\Leb^2(\mu)$ and constructs a compact approximation $T^\veps \assign G^\veps_{\mu\mu} T$ by composing $T$ with a transfer operator $G^\veps_{\mu\mu}$ induced by the entropic transport plan of $\mu$ onto itself (see Sections \ref{subsec:Opt_Trans} and \ref{sec:transfer} below for details) where $\veps$ denotes the strength of entropic regularization.
It was shown that $G_{\mu\mu}^\veps$ can be thought of as an operator that introduces blur at a length scale $\sqrt{\veps}$ while preserving the invariant measure $\mu$ (unlike the more naive blur used, for instance, in \cite{DelJun1999,Froyland2013Coherent}), and thus $T^\veps$ can be interpreted as a compactification of $T$ that preserves $\mu$ and the dynamics on length scales above $\sqrt{\veps}$.
Then an approximation $T_N^\veps : \Leb^2(\mu_N) \to \Leb^2(\mu_N)$ is introduced where $\mu_N\assign \tfrac1N \sum_i \delta_{x_i}$ is the random empirical measure associated with the random variables $(x_i)_i$.
For a suitable extension of $T_N^\veps$ from $\Leb^2(\mu_N)$ to $\Leb^2(\mu)$ qualitative convergence towards $T^\veps$ in Hilbert--Schmidt norm is then shown for fixed $\veps>0$ as $N \to \infty$.

In this article we expand the construction and analysis given in \cite{EntropicTransfer22} in several ways. In particular we modify the definition of $T^\veps$ and $T_N^\veps$ such that convergence also holds when the original system $T$ is stochastic, i.e.~not induced by a deterministic map $F$ but by a more general transition kernel $(\kappa_x)_x$.

\subsection{Contribution and outline} \label{sec:contribution}
Throughout the rest of Section \ref{sec:intro} we collect necessary notation and concepts on entropic optimal transport and transfer operators.

\paragraph{Introduction of double-blurred entropic transfer operator.}
In Section \ref{subsec:main_defs} we consider a measure $\pi \in \prob(\SpX \times \SpX)$ with marginals $\mu$ and $\nu$ and its induced transfer operator $T : \Leb^p(\mu) \to \Leb^p(\nu)$.
We then introduce a compact approximation
$$T^\veps \assign G_{\nu\nu}^\veps T G_{\mu\mu}^\veps  \;:\; \Leb^2(\mu) \to \Leb^2(\nu)$$
where $G_{\nu\nu}^\veps$ and $G_{\mu\mu}^\veps$ are entropic transport blur operators.
In a manner similar to \cite{EntropicTransfer22}, $T^\veps$ can be thought of as compact approximation of $T$ that preserves dynamics on length scales above $\sqrt{\veps}$ and is thus more amenable to interpretation or estimation from data.

Given data in the form of observed transitions $(x_i,y_i)_{i=1}^N$, we then define the empirical approximation
$$T^\veps_N \assign G_{\nu_N\nu_N}^\veps T_N G_{\mu_N\mu_N}^\veps \;:\; \Leb^2(\mu_N) \to \Leb^2(\nu_N).$$
Here $G_{\nu_N\nu_N}^\veps$ and $G_{\mu_N\mu_N}^\veps$ are empirical entropic transport blur operators and $T_N : \Leb^2(\mu_N)\allowbreak \to \Leb^2(\nu_N)$ is the operator induced by the $(x_i,y_i)_i$.
$T^\veps_N$ can be constructed and analyzed numerically, e.g.~its dominant singular values and vectors can be computed. Therefore the main question of this article is, how spectral analysis of $T^\veps_N$ relates to the regularized full operator $T^\veps$ or even $T$ itself (when the latter is already compact).

To this end we extend $T^\veps_N$ to an operator $T^{A,\veps}_N : \Leb^2(\mu) \to \Leb^2(\nu)$ by isometrically embedding $\Leb^2(\mu_N)$ and $\Leb^2(\nu_N)$ into $\Leb^2(\mu)$ and $\Leb^2(\nu)$ via piecewise constant functions. Therefore, $T^{A,\veps}_N$ has the same non-zero spectrum as $T^\veps_N$ (cf.~Section \ref{subsec:SpectralConvergence}). Note that only $T^{\veps}_N$ will be relevant for numerical methods and that $T^{A,\veps}_N$ is merely introduced for theoretical analysis. The main theoretical contribution of this article is then to quantify the (probabilistic) convergence of $T^{A,\veps}_N$ to $T^\veps$ in Hilbert--Schmidt norm, as $N \to \infty$. For this we introduce two additional auxiliary operators $T^{B,\veps}_N$ and $T^{C,\veps}_N$. All defined operators and their relations are summarized in Figure \ref{fig:operator_overview}.

Compared to the original definition in \cite{EntropicTransfer22} here the definition of $T^\veps$ no longer assumes that $\mu$ is an invariant measure (i.e.~$\mu \neq \nu$ in general) and two blurring steps are applied (similar to \cite{Froyland2013Coherent}). This is needed to ensure that the extension $T_N^{A,\veps}$ still converges to $T^\veps$ as $N \to \infty$ when $T$ is not induced by a deterministic continuous map $F$ (see Section \ref{subsec:exampleNonConv} for an example where a single blur operation is not sufficient).

\paragraph{Quantitative probabilistic convergence analysis of $T^{A,\veps}_N$ to $T^\veps$.}
The main mathematical contribution of this article is the quantitative analysis of the convergence of the extended empirical regularized operator $T_N^{A,\veps}$ to the true regularized $T^\veps$, extending the qualitative approach of \cite{EntropicTransfer22}. Some preliminary results are established in Section \ref{subsec:preliminary}.
The convergence itself is developed throughout Section \ref{subsec:quant_cvg} in three steps.
\textbf{\Cref{prop:cross_operators}} shows that $\| T_N^{A,\veps}-T_N^{B,\veps} \|_{\HS} \to 0$ as $N \to \infty$ with a dimension-dependent rate which is related to the sample complexity of unregularized optimal transport. This is expected, since the kernel $t^{A,\veps}_\veps$ of $T_N^{A,\veps}$ turns out to be a piecewise constant approximation of the kernel $t_N^{B,\veps}$ of $T_N^{B,\veps}$.
\textbf{\Cref{prop:prob_bias_bound}} then shows $\| T_N^{B,\veps}-T_N^{C,\veps} \|_{\HS} \to 0$ as $N\to \infty$ with parametric rate where the effective dimension (see \Cref{ass:min_ball_mass} for details) of the measures $\mu$ and $\nu$ enters in the constant.
The key step is to control the discrepancy between the kernels of $G_{\mu_N\mu_N}^\veps$ and $G_{\mu\mu}^\veps$ (and likewise for $\nu^N$ and $\nu$) with results on the sample complexity of entropic optimal transport \cite{luise2019sinkhorn}.
\textbf{\Cref{prop:prob_var_bound}} shows that $\| T_N^{C,\veps}-T^{\veps} \|_{\HS} \to 0$ as $N\to\infty$ with almost parametric rate (where the dimension enters again in the constant). For this the discrepancy between the true $\pi$ and its empirical approximation $\pi_N \assign \tfrac1N \sum_i \delta_{(x_i,y_i)}$ is accounted for with a concentration inequality that leverages the regularity of the kernel of $G_{\mu\mu}^\veps$.

\paragraph{Further convergence results.}
Sections \ref{subsec:cv_Teps2T}, \ref{subsec:stationary}, and \ref{subsec:SpectralConvergence} collect further convergence results with practical relevance for data analysis.
Section \ref{subsec:cv_Teps2T} addresses the convergence of $T^\veps$ to $T$ as $\veps \to 0$ under the assumption that $T$ has a kernel with Hölder-type continuity. This serves to illustrate that $T^{\veps}_N$ may not only approximate $T^\veps$ as $N \to \infty$ for fixed $\veps>0$, but also potentially $T$ directly in some joint limit $N\to \infty$ and $\veps \to 0$, if $T$ is sufficiently regular. Section \ref{subsec:stationary} gives an adjusted definition of $T_N^{\veps}$ (and all related operators) for the stationary setting where $\mu=\nu$. All prior results canonically carry over to this setting. Section \ref{subsec:SpectralConvergence} collects several results on the convergence of eigen- and singular value decompositions, which ensure that analysis of $T^\veps_N$ ultimately reveals properties of $T^\veps$ or $T$.

\paragraph{Out-of-sample embedding.}
Section \ref{subsec:out_of_sampling} then shows that the regularity of entropic optimal transport can be used to construct an extension of eigen- and singular functions of $T^\veps_N$ to the whole space $\SpX$, which can be used to obtain out-of-sample embeddings for new samples, when a spectral decomposition has previously been computed on a smaller subset of samples.
This is reminiscent of the Nystr\"om approximation of kernel matrices and related subsampling methods for (kernel) PCA \cite{WilliamsSeeger2000,Achlioptas2001,Coifman2006b}. However, our extension is based directly on the regularity of the entropic transport kernel and does not rely on pseudo inverses.

\paragraph{Examples and numerical experiments.}
Finally, \Cref{sec:numerics} gathers (numerical) examples. Section \ref{subsec:exampleNonConv} underscores the importance of double blurring when working with stochastic systems.
Section \ref{subsec:workflow} describes the algorithmic workflow for numerically analyzing a new empirical dataset. Section \ref{subsec:num_1d} illustrates the convergence behaviour of entropic transfer operators on the simple synthetic example of a stochastic shift on a torus.
A numerical comparison with Ulam's method on a synthetic example is given in Section \ref{sec:Ulam}.
Section \ref{sec:convection} analyses a dataset from fluid dynamics that was previously examined by a combination of diffusion maps and Ulam's method in \cite{Koltai_2020}. This demonstrates that entropic transfer operators are a robust and transparent method (with only a single parameter) that can scale to large datasets by the use of contemporary GPU hardware and suitable software \cite{Keops}.

\paragraph{Relation to \cite{beier2024transfer}.}
In this article we construct an approximation of the transfer operator $T$ from observed transitions $(x_i,y_i)_{i=1}^N$. In \cite{beier2024transfer} a variant of the problem is considered where points are observed in $N$ batches of $M$ particles per batch, i.e.~for each $i \in \{1,\ldots,N\}$ one obtains $M$ point pairs $(x_{i,j},y_{i,j})_{j=1}^M$, but the association between the points is not observed, instead $y_{i,j}$ is obtained as the evolution of $x_{i,\sigma_i(j)}$ for some unknown random permutation $\sigma_i$. It is then shown that one can still recover an approximation $\widehat{T}^\veps_N$ of the transfer operator by taking an ansatz $\widehat{T}^\veps_N=G_{\nu_N\nu_N}^\veps Q G_{\mu_N\mu_N}^\veps$ and optimizing a suitable approximate log-likelihood with respect to $Q$. The blur operators $G_{\nu_N\nu_N}^\veps$ and $G_{\mu_N\mu_N}^\veps$ serve to limit the bandwidth of the approximation and thus control the variance of the estimator. This ansatz is similar to the form $T^\veps_N  \assign G_{\nu_N\nu_N}^\veps T_N G_{\mu_N\mu_N}^\veps$ that we consider here.

The main objectives of \cite{beier2024transfer} are to show qualitative convergence (under suitable assumptions) of maximizers $\widehat{T}^\veps_N$ of the approximate likelihood to the true operator $T$ as $N \to \infty$, and to devise a numerical algorithm for optimizing over $Q$. In contrast, in the present article the `middle' operator $T_N$ is directly observed and quantitative convergence $T_N^\veps \to T^\veps$ is established.

\subsection{Setting and notation}
Throughout this article, let $(\SpX,d)$ be a compact metric space. We equip $\SpX \times \SpX$ with the metric $((x,y),(x',y')) \mapsto \sqrt{d(x,x')^2 + d(y,y')^2}$.
At some points we will assume in addition that $\SpX$ is a subset of $\R^d$ equipped with the Euclidean distance metric. This will be mentioned explicitly.
For a compact metric space $\SpZ$, denote by $\Cont(\SpZ)$ the Banach space of continuous real-valued functions on $\SpZ$, equipped with the supremum norm. We identify its dual space with the space of (finite) Radon measures on $\SpZ$, denoted by $\meas(\SpZ)$.
The subsets of non-negative and probability measures are denoted by $\measp(\SpZ)$ and $\prob(\SpZ)$ respectively.
For $\sigma, \tau \in \meas(\SpZ)$ we denote by $\KL(\sigma|\tau)$ the Kullback--Leibler divergence of $\sigma$ with respect to $\tau$, i.e.
\begin{align*}
  \KL(\sigma|\tau) \assign \begin{cases}
    \displaystyle \int_Z \varphi\left(\RadNik{\sigma}{\tau}\right)\,\diff \tau & \tn{if } \sigma, \tau \geq 0,\;\sigma \ll \tau, \\
    \displaystyle +\infty & \tn{otherwise,}
  \end{cases}
  \quad \tn{where} \quad
  \varphi : \R \ni s \mapsto \begin{cases}
    s \log s - s + 1 & \tn{for } s>0, \\
    1 & \tn{for } s=0, \\
    +\infty & \tn{otherwise.}
  \end{cases}
\end{align*}
For two compact metric spaces $\SpZ_1$, $\SpZ_2$,  $\mu\in\meas(\SpZ_1)$, and a measurable function $f : \SpZ_1\to\SpZ_2$, the push forward measure $f_\#\mu\in\meas(\SpZ_2)$ is defined by the relation
\begin{equation*}
  \int_{\SpZ_2} h\, \diff f_\#\mu  = \int_{\SpZ_1} h\circ f\, \diff\mu 
\end{equation*}
for any $h\in\Cont(\SpZ_2)$.
Let $P^1_{\SpX} : \SpX^2\ni(x,y)\mapsto x$ be the projection onto the first component and $P^2_{\SpX} : \SpX^2\ni(x,y)\mapsto y$ onto the second.
For two Borel measurable functions $f,g$ on $\SpX$, we can construct the function 
\begin{align*}
  f\oplus g:\SpX^2\ni(x,y)\mapsto f(x)+g(y).
\end{align*}
For any function $f : \SpX \to \R$ we denote by $\norm{f}_{\infty} := \sup_{x\in\SpX} \abs{f(x)}$.
Let $B(x,r)\assign \{x'\in\SpX\mid d(x,x')< r\}$ denote the open ball centered at $x\in\SpX$ with radius $r > 0$. 
For normed spaces $U, V$ and a linear operator $T : U \to V$ we denote by $\opNorm{T}$ the induced operator norm. 
When $U$ and $V$ are Hilbert spaces, we denote by $\norm{T}_{\HS}$ the Hilbert--Schmidt norm. 
In this case one has $\opNorm{T} \leq \norm{T}_{\HS}$.
For positive real valued functions $A ,B: \Theta \mapsto \R_+$ defined on some space $\Theta$, we use $A(\theta) \lesssim B(\theta)$ to indicate there exists some positive constant $C$ independent of $\theta$ such that $A(\theta) \leq C\cdot B(\theta)$ for all $\theta \in \Theta$.
We mention explicitly, which parameters are not part of $\theta$ in such instances.

\subsection{Optimal transport and entropic regularization} \label{subsec:Opt_Trans}
The following proposition collects a few standard properties of entropic optimal transport. Proofs can be found, for instance, in \cite{NS2021},
see also \cite{BoSch2020,FSVATP2018,santambrogio2015optimal}.
\begin{proposition}[Entropic optimal transport] \label{prop:EOT_intro}
  For $\mu, \nu \in \prob(\SpX)$, some cost function $c \in \Cont(\SpX \times \SpX)$, and a regularization parameter $\veps > 0$, the corresponding primal entropic optimal transport problem is given by
  \begin{equation} \label{eq:def_entrOT_primal}
    I_{c}^\veps(\mu,\nu) \assign \inf\left\{\int_{\SpX^2}c(x,y)\,\diff\pi(x,y)+ \veps \KL(\pi\mid\mu\otimes\nu) ~\middle|~\pi\in\Pi(\mu,\nu)\right\}
  \end{equation}
  where
  \begin{equation} \label{eq:def_trans_plan}
    \Pi(\mu,\nu) \assign \left\{\pi\in\mathcal{P}(\SpX\times \SpX) \middle| P_{\SpX\#}^1\pi = \mu , \ =P_{\SpX\#}^2\pi = \nu\right\}
  \end{equation}
  is the set of transport plans between $\mu$ and $\nu$ (recall that $P_{\SpX}^i$, $i=1,2$, are the projections from $\SpX \times \SpX$ to the first and second coordinate).
  The dual problem is given by
  \begin{align} \label{eq:def_entrOT_dual}
    \sup \left\{ \int_{\SpX} \alpha \, \diff \mu + \int_{\SpX} \beta \, \diff \nu - \veps \int_{{\SpX}^2} \left[\exp([\alpha \oplus \beta-c]/\veps)-1\right]\,\diff \mu \otimes \nu ~\middle|~ \alpha,\beta \in \Cont({\SpX}) \right\}.
  \end{align}
  Problem \eqref{eq:def_entrOT_primal} has a unique minimizer $\pi$, maximizers in \eqref{eq:def_entrOT_dual} exist. For any pair of maximizers $(\alpha,\beta)$ of \eqref{eq:def_entrOT_dual} one has
  \begin{equation} \label{eq:EntropicPD}
    \pi=\exp([(\alpha \oplus \beta) - c]/\veps) \cdot \mu \otimes \nu
  \end{equation}
  and
  \begin{equation} \label{eq:Sinkhorn}
    \begin{aligned}
      \alpha(x) & = -\veps \log\left(\int_{\SpX} \exp([\beta(y)-c(x,y)]/\veps)\,\diff \nu(y) \right), \\
      \beta(y) & = -\veps \log\left(\int_{\SpX} \exp([\alpha(x)-c(x,y)]/\veps)\,\diff \mu(x) \right)
    \end{aligned}
  \end{equation}
  for $\mu$-almost all $x$ and $\nu$-almost all $y$. In particular, for any two dual maximizers $(\alpha_1,\beta_1)$ and $(\alpha_2,\beta_2)$ their outer sums $\alpha_i \oplus \beta_i$, $i=1,2$, are the same $(\mu\otimes\nu)$-almost everywhere. 
  Furthermore, given one solution $(\alpha, \beta)$ of \eqref{eq:def_entrOT_dual}, the set of all solutions is given by shifts $(\alpha + t, \beta - t)$ for $t \in \R$ almost everywhere.
\end{proposition}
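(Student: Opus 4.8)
The plan is to obtain every assertion from convex duality together with the strict convexity and weak-$*$ lower semicontinuity of the Kullback--Leibler divergence; since complete proofs are available in \cite{NS2021} I only describe the structure. First I would settle existence and uniqueness of the primal minimizer. Since $\mu\otimes\nu\in\Pi(\mu,\nu)$ the feasible set is non-empty and the objective takes the finite value $\int_{\SpX^2}c\,\diff\mu\otimes\nu$ there, so $I^\veps_c(\mu,\nu)<\infty$. As $\SpX$ is compact, $\Pi(\mu,\nu)$ is tight and, being the intersection of $\prob(\SpX^2)$ with the weak-$*$ closed marginal constraints, is weak-$*$ compact by Prokhorov's theorem. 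On $\Pi(\mu,\nu)$ the term $\pi\mapsto\int_{\SpX^2} c\,\diff\pi$ is weak-$*$ continuous while $\pi\mapsto\KL(\pi\mid\mu\otimes\nu)$ is convex and weak-$*$ lower semicontinuous, so the objective attains its infimum; uniqueness follows from strict convexity of $\KL(\,\cdot\mid\mu\otimes\nu)$ on the convex set where it is finite.

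Next I would treat duality. The functional in \eqref{eq:def_entrOT_dual} is finite, concave and continuous on all of $\Cont(\SpX)\times\Cont(\SpX)$, so a Fenchel--Rockafellar argument taking the dual as the base problem shows that its biconjugate is exactly \eqref{eq:def_entrOT_primal}, with no duality gap (this is where $I^\veps_c(\mu,\nu)<\infty$ and the continuity of the exponential integrand enter). Existence of dual maximizers I would establish, after quotienting out the shift ambiguity, from a maximizing sequence: replacing a potential by the corresponding right-hand side of \eqref{eq:Sinkhorn} does not decrease the objective and, since $\SpX$ is compact, produces functions inheriting the uniform modulus of continuity of $c$ with oscillation at most $2\norm{c}_\infty$; together with boundedness of the $\mu$- and $\nu$-averages along the sequence this lets Arzel\`a--Ascoli extract a uniformly convergent subsequence whose limit is a maximizing pair in $\Cont(\SpX)$. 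The Euler--Lagrange condition for the dual functional is exactly \eqref{eq:Sinkhorn}, so every dual maximizer satisfies it; consequently the Gibbs measure $\exp([\alpha\oplus\beta-c]/\veps)\cdot\mu\otimes\nu$ built from any maximizer is a feasible transport plan (its marginals are $\mu$ and $\nu$ by \eqref{eq:Sinkhorn}) whose primal cost equals the gap-free dual value, hence equals the unique primal minimizer --- this is \eqref{eq:EntropicPD}.

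Finally I would address the shift structure. If $(\alpha_1,\beta_1)$ and $(\alpha_2,\beta_2)$ are dual maximizers, then by \eqref{eq:EntropicPD} both $\exp([\alpha_i\oplus\beta_i-c]/\veps)\cdot\mu\otimes\nu$ equal the unique primal minimizer $\pi$, so $\alpha_1\oplus\beta_1=\alpha_2\oplus\beta_2$ holds $(\mu\otimes\nu)$-almost everywhere; hence $\alpha_1(x)-\alpha_2(x)=\beta_2(y)-\beta_1(y)$ for $\mu$-a.e.\ $x$ and $\nu$-a.e.\ $y$, and since the left side depends on $x$ only and the right on $y$ only, both equal a common constant $t$, giving $\alpha_1=\alpha_2+t$ $\mu$-a.e.\ and $\beta_1=\beta_2-t$ $\nu$-a.e. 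Conversely, any such shift leaves $\alpha\oplus\beta$ and $\int_{\SpX}\alpha\,\diff\mu+\int_{\SpX}\beta\,\diff\nu$ unchanged and hence is again a maximizer. I expect the main obstacle to be the duality step --- arranging the Fenchel--Rockafellar argument so that the qualification condition genuinely holds (which is why it is cleaner to dualize the smooth dual functional rather than the primal with its pointwise marginal constraints) and carrying out the Sinkhorn-plus-Arzel\`a--Ascoli compactness argument that produces continuous dual maximizers; the primal existence and uniqueness are routine applications of the direct method.
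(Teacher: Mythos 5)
The paper gives no proof of this proposition; it simply defers to the cited references (e.g.\ \cite{NS2021}), and your outline is a correct reconstruction of exactly the standard argument found there: direct method plus strict convexity of $\KL$ for the primal, Fenchel--Rockafellar applied to the everywhere-finite dual functional to avoid constraint qualification, a Sinkhorn-update-plus-Arzel\`a--Ascoli compactness argument (after normalizing the additive constant) for existence of continuous dual maximizers, and uniqueness of the primal plan to pin down $\alpha\oplus\beta$ and hence the shift structure. No gaps worth flagging beyond the routine Fubini step needed to turn the a.e.\ identity $\alpha_1(x)-\alpha_2(x)=\beta_2(y)-\beta_1(y)$ into a constant, which you handle correctly.
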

\begin{remark} \label{rem:extended_duals}
  Equations \eqref{eq:Sinkhorn} can be evaluated at any $x, y \in \SpX$, even beyond the support of $\mu$ and $\nu$, allowing us to extend dual maximizers $(\alpha,\beta)$ to continuous functions on $\SpX$. Via \eqref{eq:Sinkhorn} the extensions inherit the modulus of continuity of $c$ (for example, the extensions of $\alpha$ and $\beta$ are Lipschitz continuous if $c$ is Lipschitz continuous). For any such extended potentials $(\alpha,\beta)$, the sum $\alpha\oplus\beta$ does not depend on the specific choice of maximizers $(\alpha,\beta)$, now everywhere on $\SpX\times\SpX$.
\end{remark}

In the specific case of `self-transport', i.e.~$\mu=\nu$, a favoured dual solution will be useful later to ensure stronger bounds on such entropic potentials:
\begin{proposition} \label{prop:self_transport}
  If $\mu = \nu$ and $c$ is symmetric, there exists a unique $\bar{\alpha}$ such that $(\bar{\alpha},\bar{\alpha})$ is a solution to \eqref{eq:def_entrOT_dual} and satisfies \eqref{eq:Sinkhorn} on the whole space $\SpX$, i.e.
  \begin{equation} \label{eq:self_Sinkhorn}
    \bar{\alpha}(x) = -\veps \log\left(\int_{\SpX} \exp([\bar{\alpha}(y)-c(x,y)]/\veps)\,\diff \nu(y) \right).
  \end{equation}
  This function is the ($\mu$-almost everywhere unique) solution to the symmetrized problem
  \begin{align} \label{eq:def_entrOT_dual_self}
    \sup \left\{ 2\int_{\SpX} \alpha \, \diff \mu - \veps \int_{{\SpX}^2} \left[\exp([\alpha \oplus \alpha-c]/\veps)-1\right]\,\diff \mu \otimes \mu ~\middle|~ \alpha\in \Cont({\SpX}) \right\}.
  \end{align}
  and we will refer to it as the optimal self-transport potential for $\mu$.
\end{proposition}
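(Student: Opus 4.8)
The plan is to obtain the symmetric potential by a symmetrization argument at the level of dual maximizers, promote it to an everywhere-defined function using the Sinkhorn map \eqref{eq:Sinkhorn}, and then read off uniqueness and the optimality for \eqref{eq:def_entrOT_dual_self} from the structure of the dual problem recorded in \Cref{prop:EOT_intro}. First I would exploit the symmetry of the data: when $\mu=\nu$ and $c$ is symmetric, the objective $D(\alpha,\beta)$ in \eqref{eq:def_entrOT_dual} is invariant under the swap $(\alpha,\beta)\mapsto(\beta,\alpha)$, so whenever $(\alpha_0,\beta_0)$ is a maximizer --- one exists by \Cref{prop:EOT_intro} --- so is $(\beta_0,\alpha_0)$. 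Since $D$ is concave in $(\alpha,\beta)$ (the penalty term is a convex function of the affine argument $\alpha\oplus\beta$, the remaining terms are affine), its maximizer set is convex and hence contains the midpoint $\big(\tfrac12(\alpha_0+\beta_0),\tfrac12(\alpha_0+\beta_0)\big)$. This produces $\bar\alpha\assign\tfrac12(\alpha_0+\beta_0)$, defined $\mu$-almost everywhere, with $(\bar\alpha,\bar\alpha)$ a solution of \eqref{eq:def_entrOT_dual}.

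Next I would make $\bar\alpha$ honest on all of $\SpX$. Because $(\bar\alpha,\bar\alpha)$ is a dual maximizer, \eqref{eq:Sinkhorn} holds $\mu$-a.e., and using $\mu=\nu$ together with the symmetry of $c$ both lines of \eqref{eq:Sinkhorn} collapse to the single relation \eqref{eq:self_Sinkhorn}, valid for $\mu$-a.e.\ $x$. The right-hand side of \eqref{eq:self_Sinkhorn} depends only on the $\mu$-a.e.\ equivalence class of $\bar\alpha$, so one can simply \emph{define} $\bar\alpha(x)$ by that right-hand side for every $x\in\SpX$: this leaves $\bar\alpha$ unchanged $\mu$-a.e., makes it continuous (it inherits the modulus of continuity of $c$, exactly as in \Cref{rem:extended_duals}), and makes \eqref{eq:self_Sinkhorn} hold pointwise on $\SpX$. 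Since $D$ only sees $\bar\alpha$ through integrals against $\mu$ and $\mu\otimes\mu$, the redefinition does not affect $D(\bar\alpha,\bar\alpha)$, so $(\bar\alpha,\bar\alpha)$ is still a maximizer of \eqref{eq:def_entrOT_dual}.

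It then remains to handle uniqueness and the link to \eqref{eq:def_entrOT_dual_self}. If $(\tilde\alpha,\tilde\alpha)$ is any maximizer of \eqref{eq:def_entrOT_dual} satisfying \eqref{eq:self_Sinkhorn} everywhere, then by the shift characterization in \Cref{prop:EOT_intro} one has $\tilde\alpha=\bar\alpha+t$ and $\tilde\alpha=\bar\alpha-t$ $\mu$-a.e.\ for some $t\in\R$, forcing $t=0$ and $\tilde\alpha=\bar\alpha$ $\mu$-a.e.; since both satisfy \eqref{eq:self_Sinkhorn}, whose right-hand side only depends on $\mu$-a.e.\ values, they agree everywhere. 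For the optimality statement, writing $J$ for the functional in \eqref{eq:def_entrOT_dual_self} one has $J(\alpha)=D(\alpha,\alpha)$, hence $\sup J\le\sup D=D(\bar\alpha,\bar\alpha)=J(\bar\alpha)$, so $\bar\alpha$ maximizes $J$; and any maximizer $\alpha$ of $J$ yields a diagonal maximizer $(\alpha,\alpha)$ of $D$, which by the same shift argument equals $\bar\alpha$ $\mu$-a.e.

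The only real difficulty is bookkeeping: keeping precise track of which identities hold $\mu$-a.e.\ versus everywhere on $\SpX$, and checking that redefining $\bar\alpha$ through the Sinkhorn map is genuinely self-consistent --- i.e.\ that the fixed-point relation \eqref{eq:self_Sinkhorn} holds pointwise after the extension, not merely $\mu$-a.e. No analysis beyond \Cref{prop:EOT_intro} and \Cref{rem:extended_duals} should be needed.
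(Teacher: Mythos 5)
Your proof is correct and follows essentially the same route as the paper: symmetrize a dual maximizer to land on the diagonal, extend it to all of $\SpX$ via the fixed-point relation \eqref{eq:self_Sinkhorn}, and derive uniqueness and the equivalence with \eqref{eq:def_entrOT_dual_self} from the shift structure of the dual solution set. The only (immaterial) difference is that you obtain the diagonal maximizer $\tfrac12(\alpha_0+\beta_0)$ from concavity of the dual objective, whereas the paper first writes $\beta=\alpha-t$ via the shift characterization and then shifts by $t/2$ --- both yield the same function.
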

\begin{proof}
  Clearly for $\mu=\nu$, \eqref{eq:def_entrOT_dual} $\geq$ \eqref{eq:def_entrOT_dual_self}.
  We show the other inequality by construction.
  Let $(\alpha, \beta)$ be some solution to \eqref{eq:def_entrOT_dual}. 
  By symmetry $(\beta, \alpha)$ is also a solution and therefore, there exists a constant $t\in\R$ such that $\beta = \alpha - t$ (at least $\mu$-almost everywhere, by Proposition \ref{prop:EOT_intro}). 
  It then follows that $(\alpha - t/2,\alpha - t/2)$ is also a solution, yielding the corresponding $\bar{\alpha}$, $\mu$-almost everywhere, which can then be extended to the whole space using \eqref{eq:self_Sinkhorn}. 
  Uniqueness of a solution $\bar{\alpha}$ of \eqref{eq:def_entrOT_dual_self} $\mu$-a.e.~follows from the fact that $\bar{\alpha}\oplus\bar{\alpha}$ is unique $(\mu\otimes\mu)$-a.e.~and therefore $\bar{\alpha}(x)=\frac{1}{2}(\Bar{\alpha}\oplus\Bar{\alpha})(x,x)$ is uniquely defined for $\mu$-a.e.~$x\in\SpX$.
\end{proof}
\noindent Throughout this article we make the following assumption on the cost function $c$.
\begin{assumption} \label{ass:lip_c}
  The cost $c$ on $\SpX \times \SpX$ is symmetric, $c\geq 0$ and $c(x,x)=0$ for any $x\in\SpX$.
  Furthermore, $c$ is uniformly Lipschitz-continuous in each argument, i.e.~there is some $\Lip(c) > 0$ s.t.~for all $x, x^{\prime}, y \in \SpX$ we have
  \begin{align*}
    \abs{c(x, y) - c(x^{\prime},y)} \leq \Lip(c) \, d(x, x^{\prime}).
  \end{align*}
\end{assumption}

\noindent In this article the following function will play a fundamental role as a data-adapted smoothing kernel.
\begin{definition}[Entropic transport kernel] \label{def:entOT_kernel}
  For $\mu,\nu\in\mathcal{P}(\SpX)$, $\veps>0$, we define the entropic transport kernel from $\mu$ to $\nu$ as
  \begin{equation} \label{eq:def_etk}
    k^\veps_{\mu\nu} \assign \exp([(\alpha \oplus \beta) - c]/\veps)
  \end{equation}
  where $\alpha$ and $\beta$ are dual maximizers of \eqref{eq:def_entrOT_dual} that satisfy \eqref{eq:Sinkhorn} on the full space $\SpX \times \SpX$, which implies that $\alpha \oplus \beta$ is unique, and therefore that $k^\veps_{\mu\nu}$ is well-defined and lies in $\Cont(\SpX \times \SpX)$.
\end{definition}

Setting $\veps=0$ in \eqref{eq:def_entrOT_primal}, one obtains the \emph{unregularized} optimal transport problem. 
In this setting, minimal $\pi$ still exist by standard compactness continuity arguments, although they are no longer necessarily unique.
By setting $c(x,y) \assign d(x,y)^p$, this problem induces the celebrated Wasserstein distance on $\prob(\SpX)$.
\begin{proposition}[$p$-Wasserstein metric]
  For $p \in [1,\infty)$, the $p$-Wasserstein distance on $\prob(\SpX)$ between $\mu,\mu'\in\prob(\SpX)$ is given by
    \begin{equation} \label{eq:def_pWassMetric}
      W_p(\mu,\mu'):= \left(\inf_{\pi\in\Pi(\mu,\mu')}\int_{\SpX^2}d(x,x')^p \,\diff\pi(x,x')\right)^{\frac{1}{p}}.
    \end{equation}
    $W_p$ metrizes the weak* topology on $\prob(\SpX)$.
\end{proposition}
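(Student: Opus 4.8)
The plan is to prove the two assertions separately: that $W_p$ defines a metric on $\prob(\SpX)$, and that the topology it induces is the weak* topology.

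For the metric axioms I would argue as follows. Non-negativity is immediate, and symmetry follows since the coordinate swap $(x,x')\mapsto(x',x)$ is a bijection of $\Pi(\mu,\mu')$ onto $\Pi(\mu',\mu)$ and $d$ is symmetric. To handle the identity of indiscernibles I would first note that, because $\SpX^2$ is compact and $d^p$ is bounded and continuous, $\Pi(\mu,\mu')$ is weak* compact and $\pi\mapsto\int d^p\,\diff\pi$ is weak* continuous, so the infimum in \eqref{eq:def_pWassMetric} is attained at some $\pi^\star$; if $W_p(\mu,\mu')=0$ then $\int d^p\,\diff\pi^\star=0$, so $\pi^\star$ is concentrated on $\{d=0\}$, which is the diagonal since $d$ is a metric, whence $\mu=P_{\SpX\#}^1\pi^\star=P_{\SpX\#}^2\pi^\star=\mu'$; the reverse implication uses the diagonal coupling $(\mathrm{id},\mathrm{id})_\#\mu\in\Pi(\mu,\mu)$. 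The triangle inequality I would obtain by the gluing lemma: for $\mu_1,\mu_2,\mu_3$ pick optimal $\pi_{12}\in\Pi(\mu_1,\mu_2)$ and $\pi_{23}\in\Pi(\mu_2,\mu_3)$, disintegrate $\pi_{12}$ along its second marginal and $\pi_{23}$ along its first (both equal to $\mu_2$), glue the conditionals over $\mu_2$ into a measure $\pi_{123}\in\prob(\SpX^3)$ whose $(1,2)$- and $(2,3)$-marginals recover $\pi_{12}$ and $\pi_{23}$, push forward to the $(1,3)$-coordinates to obtain a competitor in $\Pi(\mu_1,\mu_3)$, and combine the triangle inequality for $d$ with Minkowski's inequality in $\Leb^p(\pi_{123})$.

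For ``$W_p$ metrizes the weak* topology'' I would set $D\assign\mathrm{diam}(\SpX)<\infty$ and reduce to comparing convergent sequences, which is legitimate since $\prob(\SpX)$ with the weak* topology is metrizable (compactness of $\SpX$, hence separability of $\Cont(\SpX)$) and so is the $W_p$-topology. Hölder's inequality applied to an arbitrary plan gives $W_1\le W_p$, and the elementary bound $d^p\le D^{p-1}d$ gives $W_p^p\le D^{p-1}W_1$, so $W_p(\mu_n,\mu)\to0$ iff $W_1(\mu_n,\mu)\to0$, and it suffices to treat $p=1$. If $W_1(\mu_n,\mu)\to0$, then Kantorovich--Rubinstein duality yields $\lvert\int f\,\diff\mu_n-\int f\,\diff\mu\rvert\le\Lip(f)\,W_1(\mu_n,\mu)\to0$ for every Lipschitz $f$, and since Lipschitz functions are dense in $\Cont(\SpX)$ this gives $\mu_n\to\mu$ weak*. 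For the converse I would argue by contradiction: if $\mu_n\to\mu$ weak* but $W_1(\mu_n,\mu)\ge\delta>0$ along a subsequence, Kantorovich--Rubinstein duality supplies $1$-Lipschitz functions $f_n$, normalized by $f_n(x_0)=0$ at a fixed $x_0$ (hence uniformly bounded by $D$), with $\int f_n\,\diff\mu_n-\int f_n\,\diff\mu\ge\delta/2$; Arzel\`a--Ascoli extracts a uniformly convergent further subsequence $f_n\to f$, and then $\int f_n\,\diff\mu_n\to\int f\,\diff\mu$ and $\int f_n\,\diff\mu\to\int f\,\diff\mu$, contradicting $\delta/2\le0$. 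Hence $W_1(\mu_n,\mu)\to0$, and therefore $W_p(\mu_n,\mu)\to0$.

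I expect the only genuine obstacle to be the implication ``weak* convergence $\Rightarrow W_p\to0$'': the reduction to $p=1$ leans on finite diameter (compactness), and the contradiction argument needs the Arzel\`a--Ascoli compactness of normalized $1$-Lipschitz functions on $\SpX$ together with the observation that uniform convergence of the test functions is compatible with weak* convergence of the measures. Everything else --- attainment of the infimum, the gluing construction, the Hölder and Minkowski estimates, and the upgrade from ``same convergent sequences'' to ``same topology'' via metrizability of both topologies --- is routine.
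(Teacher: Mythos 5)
Your proof is correct. The paper does not supply its own argument for this proposition but defers to \cite[Chapter 5]{santambrogio2015optimal}, and your route --- attainment of the infimum by compactness, gluing plus Minkowski for the triangle inequality, the two-sided comparison of $W_p$ with $W_1$ using the finite diameter, and Kantorovich--Rubinstein duality combined with Arzel\`a--Ascoli for the equivalence with weak* convergence --- is essentially the standard proof given in that reference.
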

A proof is given in \cite[Chapter 5]{santambrogio2015optimal} (recall that $\SpX$ is compact).

\subsection{Transfer operators} \label{sec:transfer}
\begin{definition} \label{def:transfer}
  Let $\mu, \nu \in \prob(\SpX)$. We call a linear map $T:\Leb^1(\mu)\to\Leb^1(\nu)$ \emph{transfer operator} if it preserves non-negativity and the mass of non-negative functions, i.e.~if for $u \in \Leb^1(\mu)$ with $u \geq 0$ one has
  \begin{equation} \label{eq:def_transfer}
    \int_{\SpX} Tu\,\diff \nu = \int_{\SpX} u\,\diff \mu \quad \tn{and} \quad Tu \geq 0
  \end{equation}
  and so in particular $T$ maps probability densities in $\Leb^1(\mu)$ to probability densities in $\Leb^1(\nu)$.
\end{definition}

In this article, we are interested in transfer operators induced by (not necessarily optimal) transport plans $\pi\in\Pi(\mu,\nu)$.
For the special case $\mu=\nu$, a plan $\pi$ can be interpreted as encoding the dynamic of a time-homogeneous Markov chain, and in this case, $T$ is sometimes called a \emph{Markov operator} \cite{eisner2015operator}.

\begin{proposition}\label{prop:inducedTO}
  A transport plan $\pi\in\Pi(\mu,\nu)$ induces a linear operator $T : \Leb^1(\mu) \to \Leb^1(\nu) $
  that is characterized by the relation
  \begin{equation} \label{eq:def_induced_op}
    \int_\SpX(T u)(y)v(y)\,\diff\nu(y)=\int_{\SpX^2} u(x)v(y)\,\diff\pi(x,y)
  \end{equation}
  for any $u \in \Leb^1(\mu)$, $v\in\Leb^\infty(\nu)$.
  $T$ is a transfer operator, $T\mathds1_\mu=\mathds1_\nu$ (where $\mathds1_\mu$ and $\mathds1_\nu$ denote the functions that are 1 $\mu$- and $\nu$-a.e. respectively), and in fact $T$ can be restricted to a bounded linear operator $\Leb^p(\mu) \to \Leb^p(\nu)$ for any $p \in [1,\infty]$ with operator norm $1$.
\end{proposition}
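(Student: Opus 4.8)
The plan is to construct $T$ explicitly from a disintegration of $\pi$ with respect to its second marginal, and to read off every claimed property from that construction. Since $(\SpX,d)$ is a compact metric space, hence Polish, the disintegration theorem provides a $\nu$-a.e.\ unique, measurably parametrized family $(\lambda_y)_{y\in\SpX}\subset\prob(\SpX)$ with $\diff\pi(x,y)=\diff\lambda_y(x)\,\diff\nu(y)$; taking the first marginal shows $\int_\SpX\lambda_y\,\diff\nu(y)=\mu$. I would then \emph{define} $(Tu)(y)\assign\int_\SpX u\,\diff\lambda_y$ for $u\in\Leb^1(\mu)$, and check that $T$ does what is claimed.

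First I would verify that $T$ is a well-defined bounded linear map $\Leb^1(\mu)\to\Leb^1(\nu)$: measurability of $y\mapsto\int_\SpX u\,\diff\lambda_y$ is part of the disintegration, and one application of Tonelli bounds $\|Tu\|_{\Leb^1(\nu)}\le\int_\SpX\int_\SpX|u(x)|\,\diff\lambda_y(x)\,\diff\nu(y)=\int_{\SpX^2}|u(x)|\,\diff\pi(x,y)=\|u\|_{\Leb^1(\mu)}$; linearity is clear. Next I would establish the characterizing relation \eqref{eq:def_induced_op} by Fubini, the integrand $(x,y)\mapsto u(x)v(y)$ being $\pi$-integrable because $v\in\Leb^\infty(\nu)$. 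Uniqueness of an operator satisfying \eqref{eq:def_induced_op} is automatic: \eqref{eq:def_induced_op} pins down $Tu$ as an element of $\Leb^1(\nu)$ via duality against $\Leb^\infty(\nu)$, so any two such operators agree.

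The transfer-operator properties then follow directly. If $u\ge0$ $\mu$-a.e.\ then $u\ge0$ $\pi$-a.e.\ (the first marginal of $\pi$ is $\mu$), so each $\int_\SpX u\,\diff\lambda_y\ge0$, giving $Tu\ge0$; choosing $v=\mathds1_\SpX$ in \eqref{eq:def_induced_op} gives $\int_\SpX Tu\,\diff\nu=\int_\SpX u\,\diff\mu$, so $T$ is a transfer operator in the sense of \Cref{def:transfer}; and taking $u=\mathds1_\mu$ gives $Tu(y)=\lambda_y(\SpX)=1$ for $\nu$-a.e.\ $y$, i.e.\ $T\mathds1_\mu=\mathds1_\nu$. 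For the $\Leb^p$ bound with $p\in[1,\infty)$ I would apply Jensen's inequality to the probability measures $\lambda_y$ to obtain $|Tu(y)|^p\le\int_\SpX|u(x)|^p\,\diff\lambda_y(x)$ pointwise, then integrate against $\nu$ exactly as in the $\Leb^1$ estimate to get $\|Tu\|_{\Leb^p(\nu)}\le\|u\|_{\Leb^p(\mu)}$; for $p=\infty$ I would use that a $\mu$-null set $N$ is $\lambda_y$-null for $\nu$-a.e.\ $y$ (since $\int_\SpX\lambda_y(N)\,\diff\nu(y)=\mu(N)=0$). Operator norm exactly $1$ then follows from $T\mathds1_\mu=\mathds1_\nu$ together with $\|\mathds1_\mu\|_{\Leb^p(\mu)}=\|\mathds1_\nu\|_{\Leb^p(\nu)}=1$.

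The only genuinely delicate point is foundational, not computational: justifying the disintegration $(\lambda_y)_y$ and the joint measurability that makes Tonelli/Fubini legitimate — this is exactly where compactness (Polishness) of $\SpX$ enters. An alternative that avoids disintegration is to define $Tu$ as the $\nu$-density of the finite signed measure $A\mapsto\int_{\SpX^2}u(x)\mathds1_A(y)\,\diff\pi(x,y)$, which is absolutely continuous with respect to $\nu$ precisely because $\nu$ is the second marginal of $\pi$; this yields the $\Leb^1$ statement, the characterizing relation, and the transfer properties immediately, but the clean $\Leb^p$ operator-norm bound still wants the pointwise Jensen step, so I would retain the disintegration for that part.
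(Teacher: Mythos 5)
Your proposal is correct and follows essentially the same route as the paper: both construct $T$ from the disintegration of $\pi$ with respect to its second marginal $\nu$, read off positivity, mass preservation and $T\mathds1_\mu=\mathds1_\nu$ from the pointwise formula $(Tu)(y)=\int_\SpX u\,\diff\pi(x|y)$, and obtain the $\Leb^p$ bound via Jensen's inequality applied to the probability measures $\pi(\cdot|y)$, with sharpness of the operator norm coming from $T\mathds1_\mu=\mathds1_\nu$. Your write-up merely fills in details (Tonelli, the $p=\infty$ case, uniqueness via duality) that the paper leaves implicit.
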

\begin{proof}
  Denoting by $(\pi(\cdot|y))_{y \in \SpX}$ the disintegration of $\pi$ with respect to its second marginal $\nu$, one obtains from the definition \eqref{eq:def_induced_op} that for all $u\in\Leb^1(\mu)$,
  \begin{equation} \label{eq:induced_op_disintegration}
    (T u)(y)=\int_\SpX u(x)\,\diff\pi(x|y)\quad\text{for $\nu$-a.e.~$y\in\SpX$}.
  \end{equation}
  This implies \eqref{eq:def_transfer} and $T\mathds1_\mu=\mathds1_\nu$. 
  Combining \eqref{eq:induced_op_disintegration} with Jensen's inequality yields that $\norm{T u}_{\Leb^p(\nu)}\leq \norm{ u}_{\Leb^p(\mu)}$ for all $p \in [1,\infty]$, so the operator norm of $T$ is bounded by $1$, and the case $T\mathds1_\mu=\mathds1_\nu$ shows that the norm is in fact equal to $1$.
\end{proof}
In the following we will merely consider the case $p=2$, since we are primarily interested in spectral analysis on Hilbert spaces.
As discussed in Section \ref{sec:motivation} the disintegration $(\pi(\cdot|x))_{x \in \SpX}$ of $\pi$ with respect to its first marginal $\mu$ can be interpreted as the transition kernel $(\kappa_x)_{x \in \SpX}$. It is well-defined for $\mu$-almost all $x \in \SpX$, which is sufficient if the law of $x$ is assumed to be $\mu$.
In this article we will frequently use transfer operators induced by optimal entropic transport plans.
\begin{definition} \label{def:Gmne}
  For $\mu, \nu \in \prob(\SpX)$, $\veps>0$, let $\pi$ be the unique entropic optimal transport plan in \eqref{eq:def_entrOT_primal}. Then we denote the operator induced by $\pi$ according to Proposition \ref{prop:inducedTO} as $G_{\mu\nu}^\veps$.
  By \eqref{eq:EntropicPD} and \eqref{eq:def_etk}, the disintegration of $\pi$ with respect to the $\nu$-marginal at $y$ is given by $\pi(\cdot|y)=\kernel{\mu\nu}{\veps}(\cdot,y) \cdot \mu$ and therefore by \eqref{eq:induced_op_disintegration} one has
  \begin{equation} \label{eq:def_Geps}
    G_{\mu\nu}^\veps:
    \Leb^2(\mu) \rightarrow \Leb^2(\nu),
    \quad
    u \mapsto\int_{\SpX}u(x)\kernel{\mu\nu}{\veps}(x,\cdot)\,\diff\mu(x).
  \end{equation}
\end{definition}

An important class of operators is that of Hilbert--Schmidt operators, which are characterized by the following proposition.
\begin{proposition}\protect{\cite[Proposition 9.6 and 9.7] {taylor2006measure}} \label{prop:induced_op_HS}
  An operator $T : \Leb^2(\mu)\to\Leb^2(\nu)$ is Hilbert--Schmidt if and only if there exists an integration kernel $t\in\Leb^2(\mu\otimes\nu)$ such that
  \begin{equation}
    \label{eq:TByKernel}
    \int_\SpX(T u)(y)v(y)\,\diff\nu(y)=\int_{\SpX^2} u(x)v(y)\,t(x,y)\,\diff\mu(x)\,\diff \nu(y)
  \end{equation}
  for any $u \in \Leb^2(\mu)$, $v \in \Leb^2(\nu)$.
  In that case, $\norm{T}_{\HS}=\norm{t}_{\Leb^2(\mu\otimes\nu)}$ and in particular $T$ is a compact operator.
\end{proposition}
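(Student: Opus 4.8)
This is a classical characterization, so the plan is to exhibit the correspondence $t\leftrightarrow T$ explicitly via tensor-product orthonormal bases. Because $\SpX$ is a compact metric space and $\mu,\nu$ are finite Borel measures, $\Leb^2(\mu)$ and $\Leb^2(\nu)$ are separable; I would fix countable orthonormal bases $(e_i)_i$ of $\Leb^2(\mu)$ and $(f_j)_j$ of $\Leb^2(\nu)$ and invoke the standard fact (from $\sigma$-finiteness, via a monotone class argument) that the products $(e_i\otimes f_j)_{i,j}$, with $(e_i\otimes f_j)(x,y)=e_i(x)f_j(y)$, form an orthonormal basis of $\Leb^2(\mu\otimes\nu)$.

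For the implication ``$\Leftarrow$'', given $t\in\Leb^2(\mu\otimes\nu)$, I would first check that $T$ is well-defined: by Fubini--Tonelli $t(\cdot,y)\in\Leb^2(\mu)$ for $\nu$-a.e.~$y$, so $(Tu)(y)\assign\int_\SpX u(x)\,t(x,y)\,\diff\mu(x)$ exists $\nu$-a.e., and Cauchy--Schwarz followed by a second application of Fubini yields $\norm{Tu}_{\Leb^2(\nu)}\le\norm{u}_{\Leb^2(\mu)}\norm{t}_{\Leb^2(\mu\otimes\nu)}$; this $T$ visibly satisfies \eqref{eq:TByKernel}. To evaluate its Hilbert--Schmidt norm, I would use that \eqref{eq:TByKernel} gives $\langle Te_i,f_j\rangle_{\Leb^2(\nu)}=\langle t,e_i\otimes f_j\rangle_{\Leb^2(\mu\otimes\nu)}$, whence by Parseval $\norm{T}_{\HS}^2=\sum_i\norm{Te_i}_{\Leb^2(\nu)}^2=\sum_{i,j}\abs{\langle t,e_i\otimes f_j\rangle}^2=\norm{t}_{\Leb^2(\mu\otimes\nu)}^2$, which in particular is finite, so $T$ is Hilbert--Schmidt.

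For ``$\Rightarrow$'', given $T$ Hilbert--Schmidt, I would set $t_F\assign\sum_{i\in F}e_i\otimes(Te_i)$ for finite $F$; since $\langle e_i\otimes Te_i,e_{i'}\otimes Te_{i'}\rangle_{\Leb^2(\mu\otimes\nu)}=\delta_{ii'}\norm{Te_i}^2_{\Leb^2(\nu)}$ and $\sum_i\norm{Te_i}^2=\norm{T}_{\HS}^2<\infty$, the partial sums $(t_F)$ form a Cauchy net in $\Leb^2(\mu\otimes\nu)$; its limit $t$ satisfies $\norm{t}_{\Leb^2(\mu\otimes\nu)}=\norm{T}_{\HS}$. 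Testing \eqref{eq:TByKernel} with $u=e_i$ and an arbitrary $v\in\Leb^2(\nu)$ reproduces $\langle Te_i,v\rangle_{\Leb^2(\nu)}$ on both sides, and both sides depend linearly and continuously on $u$, so \eqref{eq:TByKernel} extends to all $u\in\Leb^2(\mu)$. Compactness then comes for free by approximating $T$ with the finite-rank truncations $T_n u\assign\sum_{i\le n}\langle u,e_i\rangle_{\Leb^2(\mu)}Te_i$, noting $\opNorm{T-T_n}\le\norm{T-T_n}_{\HS}=(\sum_{i>n}\norm{Te_i}^2)^{1/2}\to 0$.

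The only genuinely delicate point is the measure-theoretic scaffolding: that $(e_i\otimes f_j)_{i,j}$ really is a complete orthonormal system in $\Leb^2(\mu\otimes\nu)$ (relying on separability / $\sigma$-finiteness) and the Fubini bookkeeping that makes $Tu$ a bona fide $\Leb^2(\nu)$-element; after that, everything is an application of Parseval's identity. As this is entirely standard, simply citing \cite[Proposition 9.6 and 9.7]{taylor2006measure}, as done above, is the most economical route.
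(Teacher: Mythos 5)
The paper does not actually prove this proposition: it is quoted verbatim from \cite[Propositions 9.6 and 9.7]{taylor2006measure}, so there is no in-paper argument to compare against. Your self-contained proof is the standard one and is correct: the two directions are exactly the textbook construction (for ``$\Leftarrow$'', read off $\langle Te_i,f_j\rangle_{\Leb^2(\nu)}=\langle t,e_i\otimes f_j\rangle_{\Leb^2(\mu\otimes\nu)}$ and apply Parseval twice; for ``$\Rightarrow$'', assemble $t=\sum_i e_i\otimes Te_i$, whose summands are pairwise orthogonal with squared norms $\norm{Te_i}^2_{\Leb^2(\nu)}$ summing to $\norm{T}_{\HS}^2$), and the compactness claim via finite-rank truncations in Hilbert--Schmidt norm is fine. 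You correctly identify the only point requiring genuine care, namely the completeness of the system $(e_i\otimes f_j)_{i,j}$ in $\Leb^2(\mu\otimes\nu)$; this holds here because $\mu$ and $\nu$ are Borel probability measures on a compact metric space, so both $\Leb^2$ spaces are separable and the usual Fubini/monotone-class argument applies. Given that the result is classical and the authors themselves only cite it, your closing remark is apt: the citation is the economical route, and your write-up is a correct expansion of what that citation hides.
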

We observe that an operator $T$ induced by a transport plan $\pi$ is Hilbert--Schmidt if and only if $\pi=t \cdot \mu\otimes\nu$ for some $t\in\Leb^2(\mu\otimes\nu)$. In this case $t$ is the integration kernel of $T$.

\section{Entropic regularization of transfer operators} \label{sec:Quant_trans_op}
\subsection{Problem statement and definitions} \label{subsec:main_defs}
\begin{figure}[]
    \centering
    \def\arraystretch{1.3}
    \begin{tabular}{c|c|c|c|c}
        operator
         & spaces 
         & kernel
         & measure
         & references
         \\
         \hline
         $T$ 
         & $\Leb^2(\mu) \rightarrow \Leb^2(\nu)$ 
         &
         & $\pi$ 
         &
         \\
         $T^{\veps}$ 
         & $\Leb^2(\mu) \rightarrow \Leb^2(\nu)$ 
         & $t^{\veps} = k_{\mu\mu}^{\veps} : \pi : k_{\nu\nu}^{\veps}$  
         & $t^{\veps} \mu \otimes \nu$
         & \eqref{eq:TEps}, \eqref{eq:TEpsKernel}
         \\
         $T^{C,\veps}_N$
         & $\Leb^2(\mu) \rightarrow \Leb^2(\nu)$ 
         & $t^{C,\veps}_N = k_{\mu\mu}^{\veps} : \pi^N : k_{\nu\nu}^{\veps}$
         & $t^{C,\veps}_N \mu \otimes \nu$
         & \eqref{eq:def_kcontpidisc_op}
         \\
         $T^{B,\veps}_N$
         & $\Leb^2(\mu) \rightarrow \Leb^2(\nu)$ 
         & $t^{\veps}_N = k_{\mu_N\mu_N}^{\veps} : \pi^N : k_{\nu_N\nu_N}^{\veps}$
         & $t^{\veps}_N \mu \otimes \nu$
         & \eqref{eq:def_mixed_op}
         \\
         $T^{A,\veps}_N$
         & $\Leb^2(\mu) \rightarrow \Leb^2(\nu)$ 
         & $t^{A,\veps}_N$
         & $t^{A,\veps}_N \mu \otimes \nu$
         & \eqref{eq:def_T_extension}, \eqref{eq:tNAeps}
         \\
         $T^{\veps}_N$
         & $\Leb^2(\mu_N) \rightarrow \Leb^2(\nu_N)$ 
         & $t^{\veps}_N = k_{\mu_N\mu_N}^{\veps} : \pi^N : k_{\nu_N\nu_N}^{\veps}$
         & $t^{\veps}_N \mu_N \otimes \nu_N$
         & \eqref{eq:TEpsN}, \eqref{eq:TEpsKernelN}
         \\
         $T_N$
         & $\Leb^2(\mu_N) \rightarrow \Leb^2(\nu_N)$ 
         &
         & $\pi^N = \frac{1}{N} \sum_{i=1}^N \delta_{(x_i, y_i)}$
         & \eqref{eq:piN}
    \end{tabular}

    \vspace{4mm}

    \begin{tikzpicture}[scale=2.7]
		\node (a) at (0,0) {$T$};
		\node (b) at (1,0) {$T^{\veps}$};
		\node (c) at (2,0) {$T^{C,\veps}_{N}$};
		\node (d) at (3,0) {$T^{B,\veps}_{N}$};
		\node (e) at (4,0) {$T^{A,\veps}_{N}$};
		\node (f) at (5,0) {$T^{\veps}_{N}$};
		
		\draw[] (a) edge[bend left] node[midway, above] {\Cref{prop:Teps2T}} (b);
        \draw[] (b) edge[bend left] node[midway, above] {\Cref{prop:prob_var_bound}} (c);
        \draw[] (c) edge[bend left] node[midway, above] {\Cref{prop:prob_bias_bound}} (d);
        \draw[] (d) edge[bend left] node[midway, above] {\Cref{prop:cross_operators}} (e);
        \draw[] (e) edge[bend left] node[midway, above, yshift=-2pt] {Prop. \ref{prop:extension_eigenpairs}, \ref{prop:extension_svd}} (f);

        \draw[decorate, decoration={brace, mirror, amplitude=5pt, raise=3pt}] 
			(.05,-0.1) -- (1-.05,-0.1) 
			node[midway, below, yshift=-6pt]{\Cref{subsec:cv_Teps2T}};
        \draw[decorate, decoration={brace, mirror, amplitude=5pt, raise=3pt}] 
			(1+.05,-0.1) -- (4-.05,-0.1) 
			node[midway, below, yshift=-6pt]{\Cref{subsec:quant_cvg}};
        \draw[decorate, decoration={brace, mirror, amplitude=5pt, raise=3pt}] 
			(4+.05,-0.1) -- (5-.05,-0.1) 
			node[midway, below, yshift=-6pt]{\Cref{subsec:SpectralConvergence}};
        
	\end{tikzpicture}

    \caption{Overview of operators defined in this paper. The table summarizes the different operators defined in this paper and the graph below lists the results on their relations.}
    \label{fig:operator_overview}
\end{figure}

Let $T : \Leb^2(\mu)\to\Leb^2(\nu)$ be a transfer operator induced by some plan $\pi \in \Pi(\mu,\nu)$ (see Proposition \ref{prop:inducedTO}).
In this section we will introduce a compact approximation $T^\veps$ of $T$, a discrete approximation $T_N^\veps$ of $T^{\veps}$ based on empirical data, and some auxiliary definitions to examine the convergence of (an extension of) $T_N^\veps$ towards $T^\veps$.

\begin{definition}[Regularized transfer operator] \label{def:entr_trans_basic}
  For some regularization parameter $\veps>0$, we introduce the entropic regularization $T^\veps : \Leb^2(\mu)\to \Leb^2(\nu)$ of the operator $T$ as
  \begin{align} \label{eq:TEps}
    T^\veps \assign  G_{\nu\nu}^\veps \circ T \circ G_{\mu\mu}^\veps.
  \end{align}
\end{definition}
As a composition of three transfer operators, $T^\veps$ is itself a transfer operator.
$T^\veps$ is compact since $G_{\nu\nu}^\veps$ is compact. Borrowing intuition from \cite{EntropicTransfer22}, the operators $G_{\mu\mu}^\veps$ and $G_{\nu\nu}^\veps$ introduce blur at a length scale $\sqrt{\veps}$ and thus $T^\veps$ can be thought of as a compact approximation of $T$ that preserves dynamic features on a length scale above $\sqrt{\veps}$. In the following we will analyse how $T^\veps$ can be approximated from discrete data.
\begin{remark} \label{rem:single_smoothing}
  In \cite{EntropicTransfer22} operators of the form $G_{\mu\nu}^\veps \circ T$, i.e.~with a single blurring step, were considered for deterministic $T$ induced by a continuous map $F : \SpX \to \SpX$ (in this case one has $\pi=(\id,F)_{\#}\mu$).
  The most important difference in definition \eqref{eq:TEps} is a second blurring operator that acts before $T$. We will show in Section \ref{subsec:exampleNonConv} that this is necessary to approximate $T^\veps$ by discrete data in the case where $T$ is stochastic, i.e.~not induced by a deterministic map $F$.
\end{remark}

Functions of the following form will appear repeatedly in this article as integration kernels for various operators.
\begin{proposition} \label{prop:entr_trans_general}
  For some $\mu, \mu', \nu, \nu' \in \prob(\SpX)$, $\pi \in \Pi(\mu,\nu)$, introduce the function
  \begin{equation} \label{eq:def_entr_trans}
    (\kernel{\mu'\mu}{\veps}:\pi:\kernel{\nu\nu'}{\veps}):
    (x',y')
    \mapsto
    \int_{\SpX^2}
    \kernel{\mu'\mu}{\veps}(x',x)\kernel{\nu\nu'}{\veps}(y,y')
    \,\diff\pi(x,y).
  \end{equation}
  It is a non-negative, continuous function on $\SpX \times \SpX$ and defines a transport plan $(\kernel{\mu'\mu}{\veps}:\pi:\kernel{\nu\nu'}{\veps}) \cdot \mu' \otimes \nu' \in \Pi(\mu',\nu')$. 
\end{proposition}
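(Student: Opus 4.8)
The plan is to establish the three claimed properties of the function $(\kernel{\mu'\mu}{\veps}:\pi:\kernel{\nu\nu'}{\veps})$ in turn: non-negativity, continuity, and that the associated measure is a transport plan in $\Pi(\mu',\nu')$.

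\textbf{Non-negativity and continuity.} First I would note that by \Cref{def:entOT_kernel} the kernels $\kernel{\mu'\mu}{\veps}$ and $\kernel{\nu\nu'}{\veps}$ are of the form $\exp([(\alpha\oplus\beta)-c]/\veps)$ with continuous $\alpha,\beta$ (using the extended Sinkhorn identities \eqref{eq:Sinkhorn}), hence they are strictly positive and continuous on $\SpX\times\SpX$. In particular the integrand $(x',y',x,y)\mapsto \kernel{\mu'\mu}{\veps}(x',x)\kernel{\nu\nu'}{\veps}(y,y')$ is non-negative, so the integral in \eqref{eq:def_entr_trans} is non-negative for every $(x',y')$. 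For continuity I would invoke the fact that $\SpX$ is compact, so each kernel is uniformly continuous and bounded; then the map $(x',y')\mapsto \int_{\SpX^2}\kernel{\mu'\mu}{\veps}(x',x)\kernel{\nu\nu'}{\veps}(y,y')\,\diff\pi(x,y)$ is continuous by dominated convergence (or directly: the modulus of continuity of the integrand in $(x',y')$, uniform in $(x,y)$, transfers to the integral since $\pi$ is a probability measure).

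\textbf{Marginals.} The key step is checking that $P^1_{\SpX\#}\big((\kernel{\mu'\mu}{\veps}:\pi:\kernel{\nu\nu'}{\veps})\cdot\mu'\otimes\nu'\big)=\mu'$ and similarly for the second marginal. By Fubini (everything non-negative and bounded on a compact space, so integrable), for any $\phi\in\Cont(\SpX)$,
\begin{align*}
  \int_{\SpX^2}\phi(x')\,(\kernel{\mu'\mu}{\veps}:\pi:\kernel{\nu\nu'}{\veps})(x',y')\,\diff\mu'(x')\diff\nu'(y')
  = \int_{\SpX^2}\left[\int_{\SpX}\phi(x')\kernel{\mu'\mu}{\veps}(x',x)\,\diff\mu'(x')\right]\left[\int_{\SpX}\kernel{\nu\nu'}{\veps}(y,y')\,\diff\nu'(y')\right]\diff\pi(x,y).
\end{align*}
Now the inner integral over $y'$ equals $1$ for $\nu$-a.e.\ $y$ because $\kernel{\nu\nu'}{\veps}$ is the integration kernel of $G_{\nu\nu'}^\veps$ and $G_{\nu\nu'}^\veps\mathds1_\nu=\mathds1_{\nu'}$, which by the characterization \eqref{eq:def_induced_op} (tested against $v\equiv 1$) is precisely the statement $\int_{\SpX}\kernel{\nu\nu'}{\veps}(y,y')\,\diff\nu'(y')=1$ for $\nu$-a.e.\ $y$; equivalently this is the first marginal constraint on the entropic plan $\kernel{\nu\nu'}{\veps}\cdot\nu\otimes\nu'\in\Pi(\nu,\nu')$. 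Since $y$ is integrated against $\pi$ whose second marginal is $\nu$, the right-hand side collapses to $\int_{\SpX}\left[\int_{\SpX}\phi(x')\kernel{\mu'\mu}{\veps}(x',x)\,\diff\mu'(x')\right]\diff\mu(x)$, where I used that the $x$-integration is against the first marginal of $\pi$, which is $\mu$. Finally, because $\kernel{\mu'\mu}{\veps}\cdot\mu'\otimes\mu\in\Pi(\mu',\mu)$, its second marginal is $\mu$, i.e.\ $\int_{\SpX}\kernel{\mu'\mu}{\veps}(x',x)\,\diff\mu(x)$ integrates $\phi$ against $\mu'$ — more carefully, $\int_{\SpX}\left[\int_{\SpX}\phi(x')\kernel{\mu'\mu}{\veps}(x',x)\,\diff\mu'(x')\right]\diff\mu(x)=\int_{\SpX}\phi(x')\,\diff\mu'(x')$ by Fubini and the marginal identity $\int_\SpX \kernel{\mu'\mu}{\veps}(x',x)\,\diff\mu(x)=1$ for $\mu'$-a.e.\ $x'$. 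This shows the first marginal is $\mu'$; the second marginal computation is symmetric, pairing $\phi(y')$ with the $y'$-integration, using $\int_\SpX\kernel{\mu'\mu}{\veps}(x',x)\,\diff\mu'(x')=1$ for $\mu$-a.e.\ $x$ and then the first marginal of $\pi$ being $\mu$, and finally $\int_\SpX\kernel{\nu\nu'}{\veps}(y,y')\,\diff\nu(y)=1$ for $\nu'$-a.e.\ $y'$. Taking $\phi\geq 0$ in the above also re-confirms non-negativity of the measure, and $\phi\equiv 1$ shows it has unit mass.

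\textbf{Main obstacle.} The only subtlety — not really an obstacle but the point requiring care — is bookkeeping which marginal constraint of which entropic plan is being used at each step, and making sure the almost-everywhere qualifiers are harmless: each identity like $\int_\SpX \kernel{\nu\nu'}{\veps}(y,y')\,\diff\nu'(y')=1$ holds only for $\nu$-a.e.\ $y$, but since we integrate this expression against $\pi$ whose relevant marginal is exactly $\nu$, the null sets do not matter. Everything else is routine Fubini on a compact space with bounded continuous integrands.
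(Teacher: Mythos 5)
Your proof is correct and follows essentially the same route as the paper: positivity and continuity of the kernels give non-negativity and continuity of the convolution, and the marginal conditions follow from Fubini together with the normalization identities $\int_\SpX \kernel{\mu'\mu}{\veps}(x',x)\,\diff\mu'(x')=1$ (and the analogous ones), which the paper derives from \eqref{eq:Sinkhorn}. The only cosmetic difference is that the paper observes these identities hold for \emph{every} $x\in\SpX$ (since the kernels are built from potentials satisfying \eqref{eq:Sinkhorn} on all of $\SpX$), so your careful handling of almost-everywhere qualifiers, while valid, is not strictly needed.
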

\begin{proof}
  Non-negativity and continuity are inherited from non-negativity of functions and measures in the integral and continuity of the entropic transport kernels. 
  By \eqref{eq:Sinkhorn} we have $\int_{\SpX} \kernel{\mu^{\prime}\mu}{\veps}(x^{\prime},x) \,\diff \mu^{\prime}(x^{\prime}) = 1$ for any $x \in \SpX$ (correspondingly for other measures) and therefore
  \begin{align} \label{eq:def_entr_trans_marginals}
    \int_{\SpX} (\kernel{\mu'\mu}{\veps}:\pi:\kernel{\nu\nu'}{\veps})(x',y)\,\diff \mu'(x') & = 1, &
    \int_{\SpX} (\kernel{\mu'\mu}{\veps}:\pi:\kernel{\nu\nu'}{\veps})(x,y')\,\diff \nu'(y') & = 1
  \end{align}
  for all $x,y \in \SpX$, which implies $(\kernel{\mu'\mu}{\veps}:\pi:\kernel{\nu\nu'}{\veps}) \cdot \mu' \otimes \nu' \in \Pi(\mu',\nu')$. 
\end{proof}
\begin{proposition} \label{prop:entr_trans}
  Let $t^\veps \assign (\kernel{\mu\mu}{\veps}:\pi:\kernel{\nu\nu}{\veps})$. Then $T^\veps$ is induced by the transport plan $t^\veps \cdot (\mu \otimes \nu)$ (in the sense of \Cref{prop:inducedTO}) and the integration kernel of $T^\veps$ is given by $t^\veps$, i.e.
  \begin{equation} \label{eq:TEpsKernel}
    T^\veps : \Leb^2(\mu) \rightarrow \Leb^2(\nu),
    \quad
    u \mapsto \int_\SpX u(x)t^\veps(x,\cdot) \,\diff\mu(x).
  \end{equation}
  In particular, $T^\veps$ is Hilbert--Schmidt.
\end{proposition}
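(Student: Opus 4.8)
The plan is to unfold the composition $T^\veps = G_{\nu\nu}^\veps \circ T \circ G_{\mu\mu}^\veps$ one operator at a time, using the explicit integral-kernel representations established earlier, and then recognize the resulting double integral as precisely the function $t^\veps = (\kernel{\mu\mu}{\veps}:\pi:\kernel{\nu\nu}{\veps})$ from \Cref{prop:entr_trans_general}. Concretely, I would start from a test function $u \in \Leb^2(\mu)$ and first apply $G_{\mu\mu}^\veps$ using \eqref{eq:def_Geps}, writing $(G_{\mu\mu}^\veps u)(x) = \int_\SpX u(x') \kernel{\mu\mu}{\veps}(x',x)\,\diff\mu(x')$. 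Then I would apply $T$; since the cleanest characterization of $T$ is the weak one \eqref{eq:def_induced_op}, it is easiest to pair with an arbitrary $v \in \Leb^2(\nu)\subset\Leb^\infty$-dense test function and compute $\int_\SpX (T^\veps u)(y) v(y)\,\diff\nu(y)$ by pushing $v$ backwards through $G_{\nu\nu}^\veps$ first: by self-adjointness considerations or directly from \eqref{eq:def_Geps} applied to the second marginal, $\int_\SpX (G_{\nu\nu}^\veps w)(y) v(y)\,\diff\nu(y) = \int_\SpX w(y') \big(\int_\SpX \kernel{\nu\nu}{\veps}(y',y) v(y)\,\diff\nu(y)\big)\,\diff\nu(y')$, so that the pairing against $T^\veps u$ becomes a pairing of $T(G_{\mu\mu}^\veps u)$ against the function $y' \mapsto \int_\SpX \kernel{\nu\nu}{\veps}(y',y)v(y)\,\diff\nu(y)$.

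Having reduced to a pairing of the form $\int_{\SpX^2} (G_{\mu\mu}^\veps u)(x)\,\tilde v(y)\,\diff\pi(x,y)$ via \eqref{eq:def_induced_op}, I would substitute the kernel expression for $G_{\mu\mu}^\veps u$ and for $\tilde v$, apply Fubini (justified by continuity and boundedness of the entropic kernels $\kernel{\mu\mu}{\veps}, \kernel{\nu\nu}{\veps} \in \Cont(\SpX\times\SpX)$ together with $\pi$ being a finite measure and $u,v\in\Leb^2\subset\Leb^1$), and collect the integral over $\diff\pi(x,y)$ into exactly the definition \eqref{eq:def_entr_trans} of $(\kernel{\mu\mu}{\veps}:\pi:\kernel{\nu\nu}{\veps})(x',y') = t^\veps(x',y')$. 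This yields $\int_\SpX (T^\veps u)(y')v(y')\,\diff\nu(y') = \int_{\SpX^2} u(x') v(y')\, t^\veps(x',y')\,\diff\mu(x')\,\diff\nu(y')$ for all $v$, which is \eqref{eq:TByKernel}; by \Cref{prop:induced_op_HS} this identifies $t^\veps$ as the integration kernel. The representation \eqref{eq:TEpsKernel} then follows by disintegration as in \eqref{eq:induced_op_disintegration}, using that the $\nu$-disintegration of $t^\veps\cdot(\mu\otimes\nu)$ has density $t^\veps(\cdot,y)$ (which integrates to $1$ against $\mu$ by \eqref{eq:def_entr_trans_marginals}, consistent with $\Pi(\mu,\nu)$ membership from \Cref{prop:entr_trans_general}).

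Finally, to conclude that $T^\veps$ is Hilbert--Schmidt I would invoke \Cref{prop:induced_op_HS}: since $t^\veps$ is continuous on the compact space $\SpX\times\SpX$ (by \Cref{prop:entr_trans_general}), it is bounded, hence $t^\veps \in \Leb^2(\mu\otimes\nu)$, so $T^\veps$ is Hilbert--Schmidt with $\|T^\veps\|_{\HS} = \|t^\veps\|_{\Leb^2(\mu\otimes\nu)}$. I do not anticipate a genuine obstacle here — the statement is essentially a bookkeeping computation — but the one point requiring mild care is the order in which the two blur kernels are inserted and the backward pairing through $G_{\nu\nu}^\veps$: one must make sure the variable that survives as the ``output'' variable $y'$ of $T^\veps$ is indeed the free argument of $\kernel{\nu\nu}{\veps}(y,\cdot)$ and not the integrated one. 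Keeping the weak formulation \eqref{eq:def_induced_op} throughout, rather than manipulating $(T^\veps u)(y)$ pointwise, sidesteps any subtlety about where disintegrations are only $\mu$- or $\nu$-almost everywhere defined.
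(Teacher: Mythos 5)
Your proposal is correct and follows essentially the same route as the paper's (much terser) proof: unfold the composition via the weak characterization \eqref{eq:def_induced_op} and the kernel form \eqref{eq:def_Geps}, apply Fubini to recognize $t^\veps$ from \eqref{eq:def_entr_trans}, and conclude Hilbert--Schmidtness from continuity of $t^\veps$ on the compact set $\SpX^2$ via \Cref{prop:induced_op_HS}. The extra care you take with the weak formulation and the variable bookkeeping is exactly the content the paper leaves implicit.
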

\begin{proof}
  The form of $t^\veps$ follows directly by applying \eqref{eq:def_induced_op} and \eqref{eq:def_Geps} to definition \eqref{eq:TEps}.
  Since $t^\veps$ is bounded (it is continuous on the compact set $\SpX^2$), $T^{\veps}$ is well defined and $t^{\veps} \in \Leb^2(\mu\otimes\nu)$, so $T^{\veps}$ is indeed a Hilbert--Schmidt operator.
\end{proof}

Out next goal is to analyze the operator $T$ and the system it represents based on finite observed data, consisting of $N$ point pairs $(x_i,y_i)_{i=1}^N$, that are generated by identical and independently distributed (i.i.d.) sampling from $\pi$.
\begin{definition}[Empirical data] \label{def:piN_TN}
  For each $N \in \N$, we denote by
  \begin{align} \label{eq:piN}
    \pi_N \assign \frac{1}{N} \sum_{i=1}^N \delta_{(x_i,y_i)}
  \end{align}
  the random empirical measure supported on the $N$ i.i.d.~pairs of random variables $(x_i,y_i)_{i=1}^N$ with common law $\pi$. We also denote by $\mu_N$ and $\nu_N$ respectively the first and second marginals of $\pi_N$. Finally, we denote by $T_N$ the (random) transfer operator induced by $\pi_N$, via \Cref{prop:inducedTO}.
\end{definition}
By the law of large numbers we have weak convergence $\pi_N \rightweaks \pi$, $\mu_N \rightweaks \mu$ and $\nu_N \rightweaks \nu$ almost surely as $N \to \infty$. 
We can interpret $x_i$ as a possible state of our dynamical system at some point in time, then $y_i$ will be the state of the system after one discrete time step. 
The law of $y_i$ conditioned on $x_i=x$ is then given by $\pi(\cdot|x)$ which denotes the disintegration of $\pi$ with respect to its first marginal at $x \in \SpX$.
As mentioned below \Cref{prop:inducedTO}, we can interpret $(\pi(\cdot|x))_{x}$ as a transition kernel associated with the dynamical system.

\begin{remark}[Long trajectories in ergodic systems] \label{rem:long_trajectories_in_ergodic_systems}
  Consider the case where $\mu=\nu$ is an invariant measure of a time-homogeneous Markov chain with transition probabilities encoded by $\pi \in \Pi(\mu,\mu)$. In practice, data is often obtained by sampling one large trajectory $(z_0,z_1,\ldots,z_N)$ from this chain where the law of $z_0$ is $\mu$ and the law of $z_{t+1}$ conditioned on $z_t=z$ is given by $\pi(\cdot|z)$.
  In this case we set $(x_i,y_i)=(z_{i-1},z_i)$ for $i=1,\ldots,N$ and therefore, the different $(x_i,y_i)$ are in general not independent.

  However, for Markov chains with a unique invariant measure $\mu$ where the time-scale of relaxation of the initial distribution to the invariant measure is much smaller than $N$ (in discrete time step units), i.e.~if $T^M u \approx \ones_\mu$ for all probability densities $u \in \Leb^2(\mu)$ and some $M \ll N$ (here $T^M$ denotes the $M$-th power of $T$, see \Cref{prop:inducedTO} on how $T$ is induced by $\pi$), then $z_t$ and $z_{t+M}$ are approximately independent. So if we use pairs $(x_i,y_i)=(z_{M \cdot i-1},z_{M \cdot i})$ with a skip $M$ then the above i.i.d.-assumption is a good approximation. In fact, in this case even using all pairs will work well, since approximate independence still holds for most pairs.
\end{remark}

\begin{definition}[Empirical transfer operator and regularization] \label{def:empirical_operator}
  In analogy to \eqref{eq:TEps} we define the entropic regularization of $T_N$ as
  \begin{align} \label{eq:TEpsN}
    T_N^\veps \assign  G_{\nu_N\nu_N}^\veps \circ T_N \circ G_{\mu_N\mu_N}^\veps.
  \end{align}
\end{definition}
Similar to \eqref{eq:TEpsKernel} one finds that
\begin{equation} \label{eq:TEpsKernelN}
  T_N^\veps : 
  \Leb^2(\mu_N) \rightarrow \Leb^2(\nu_N),
  \quad
  u \mapsto \int_\SpX u(x)t_N^\veps(x,\cdot) \,\diff\mu_N(x)
  \quad \tn{for} \quad
  t_N^\veps \assign (\kernel{\mu_N\mu_N}{\veps}:\pi_N:\kernel{\nu_N\nu_N}{\veps}).
\end{equation}
As with $T^\veps$, $T_N^\veps$ is a transfer operator, induced by the plan $t_N^\veps \cdot (\mu_N \otimes \nu_N) \in \Pi(\mu_N,\nu_N)$, and Hilbert--Schmidt.
$T_N^\veps$ is an operator between two finite-dimensional spaces. 
The operators $G_{\mu_N\mu_N}^\veps$ and $G_{\nu_N\nu_N}^\veps$ can each be obtained by solving a finite-dimensional entropic optimal transport problems from the discrete measure $\mu_N$ or $\nu_N$ onto itself. Hence, $T_N^\veps$ can be studied numerically as long as $N$ is not too large.
Similar to \cite{EntropicTransfer22}, we will show that a suitable extension of $T_N^\veps$ converges to $T^\veps$ almost surely as $N\to \infty$ in Hilbert--Schmidt norm.
This implies that the numerical study of $T_N^\veps$ provides insights on the operator $T^\veps$.
Unlike \cite{EntropicTransfer22}, we will give a quantitative bound on the rate of convergence, which allows us to also study the influence of the ambient and intrinsic dimension of the data and the joint limit $N\to\infty,~\veps\to 0$.

To be able to relate $T_N^\veps$ to $T^\veps$, we extend the former from $\Leb^2(\mu_N) \to \Leb^2(\nu_N)$ to the spaces $\Leb^2(\mu) \to \Leb^2(\nu)$. The following is an adaptation of the construction in \cite[Section 4.7]{EntropicTransfer22}.
\begin{definition}[Empirical transfer operator extension]
 \label{def:T_extension}
  Let $\gamma^\mu_N\in\Pi(\mu,\mu_N)$ and $\gamma^\nu_N\in\Pi(\nu,\nu_N)$ be unregularized optimal transport plans for the quadratic cost $c=d^2$ from $\mu$ to $\mu_N$ and from $\nu$ to $\nu_N$ respectively.
  Let $T^\mu_N : \Leb^2(\mu)\to\Leb^2(\mu_N)$ and $T^\nu_N: \Leb^2(\nu)\to\Leb^2(\nu_N)$ be the corresponding induced operators (see Proposition \ref{prop:inducedTO}). We define
  $T^{A,\veps}_N: \Leb^2(\mu)\to\Leb^2(\nu)$ via
  \begin{equation} \label{eq:def_T_extension}
    T^{A,\veps}_N=(T^\nu_N)^*\circ T^\veps_N\circ T^\mu_N.
  \end{equation}
\end{definition}
The adjoint operator $(T^\nu_N)^* : \Leb^2(\nu_N) \to \Leb^2(\nu)$ coincides with the transfer operator induced by the transpose of the plan $\gamma^\nu_N$. Hence, $T^{A,\veps}_N$ is again a transfer operator.
For $u \in \Leb^2(\mu)$, $v \in \Leb^2(\nu)$ one finds that
\begin{align*}
  \int_{\SpX} v(y)\,(T^{A,\veps}_N u)(y)\,\diff \nu(y) & =
  \int_{\SpX^4} v(y)\,u(x)\,t_N^\veps(x',y')\,\diff \gamma^\mu_N(x,x')\,\diff \gamma^\nu_N(y,y') \\
  & =
  \int_{\SpX^2} v(y)\,u(x)\,t_N^{A,\veps}(x,y)\,\diff \mu(x)\,\diff \nu(y)
\end{align*}
with the integration kernel
\begin{equation} \label{eq:tNAeps}
  t_N^{A,\veps}(x,y) \assign \int_{\SpX^2} t_N^\veps(x',y')\,\diff \gamma^\mu_N(x'|x)\,\diff \gamma^\nu_N(y'|y)
\end{equation}
where $(\gamma^\mu_N(\cdot|x))_x$ denotes the disintegration of $\gamma^\mu_N$ with respect to its $\mu$ marginal and $(\gamma^\nu_N(\cdot|y))_y$ that of $\gamma^\nu_N$ correspondingly.
Under suitable conditions (see Section \ref{subsec:SpectralConvergence}), $T_N^\veps$ and its extension $T_N^{A,\veps}$ have the same non-zero singular values with a one-to-one correspondence for the related singular functions.

To control the discrepancy between $T^\veps$ and $T_N^{A,\veps}$ in Hilbert--Schmidt norm, we introduce below two additional intermediate auxiliary operators $T^{B,\veps}_N$ and $T^{C,\veps}_N$. In Section \ref{subsec:quant_cvg} we then control $\|T^{A,\veps}_N - T^{B,\veps}_N\|_{\HS}$, $\|T^{B,\veps}_N - T^{C,\veps}_N\|_{\HS}$, and $\|T^{C,\veps}_N - T^{\veps}\|_{\HS}$ in terms of $N$ and $\veps$, which yields the desired convergence $T^{A,\veps}_N \to T^\veps$. An overview on all introduced operators and their relations is given in Figure \ref{fig:operator_overview}.

\begin{definition}
  Using that $t^\veps_N = (\kernel{\mu_N\mu_N}{\veps}:\pi_N:\kernel{\nu_N\nu_N}{\veps})$ from \eqref{eq:TEpsKernelN} is a continuous function on the whole space $\SpX \times \SpX$
  (see \Cref{prop:entr_trans_general}), define the linear operator $T^{B,\veps}_N$ as
  \begin{equation} \label{eq:def_mixed_op}
    T^{B,\veps}_N:
    \Leb^2(\mu) \rightarrow \Leb^2(\nu),
    \quad
    u\mapsto \int_{\SpX^2}u(x)t^\veps_N(x,.)\,\diff\mu(x).
  \end{equation}
\end{definition}
$T^{B,\veps}_N$ can be interpreted as the operator induced by the measure $\pi^{B,\veps}_N\assign t^\veps_N \cdot (\mu\otimes\nu)$ but in general $\pi^{B,\veps}_N$ is not a probability measure and in particular not an element of $\Pi(\mu,\nu)$. Therefore $T^{B,\veps}_N$ is in general not a transfer operator in the sense of Definition \ref{def:transfer}.
The fact that $t^\veps_N$ is continuous on $\SpX \times \SpX$ has an additional interesting application for out-of-sample embedding, defined in Section \ref{subsec:out_of_sampling}.

\begin{definition}
  Define the linear operator $T^{C,\veps}_N$ as
  \begin{equation} \label{eq:def_kcontpidisc_op}
    T^{C,\veps}_N:\Leb^2(\mu)\to \Leb^2(\nu)
    ,\quad 
    u\mapsto \int_{\SpX^2}u(x)t^{C,\veps}_N(x,.)\,\diff\mu(x)
    \quad \tn{for} \quad
    t^{C,\veps}_N\assign (\kernel{\mu\mu}{\veps}:\pi_N:\kernel{\nu\nu}{\veps}).
  \end{equation}
\end{definition}

\subsection{Preliminary results} \label{subsec:preliminary}
Before we can prove the main result we collect some preliminary results.
\begin{proposition}[Bound on entropic kernel and its Lipschitz constant] \label{prop:bounds_kernel}
  Let \Cref{ass:lip_c} hold.
  Let $\mu,\nu \in \prob(\SpX)$ and $\veps > 0$. Then for any $x \in \SpX$ and $y \in \spt(\nu)$ one has
  \begin{equation} \label{eq:oneside_infnorm_OTker}
    \kernel{\mu\nu}{\veps}(x,y)\leq\frac{\exp(2\Lip(c))}{\nu(B(y,\veps))}.
  \end{equation}
  For $y \in \spt(\nu)$, $\kernel{\mu\nu}{\veps}(\cdot,y)$ is Lipschitz continuous with
  \begin{equation} \label{eq:oneside_LipCons_OTker}
    \Lip (\kernel{\mu\nu}{\veps}(\cdot,y))\leq \frac{2\Lip(c) \exp(2\Lip(c))}{\veps\nu(B(y,\veps))}.
  \end{equation}
  Analogous statements for the bound if $x \in \spt(\mu)$ and Lipschitz continuity of $\kernel{\mu\nu}{\veps}(x,\cdot)$ hold.
\end{proposition}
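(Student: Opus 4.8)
The plan is to exploit the closed form of the entropic kernel $\kernel{\mu\nu}{\veps}$ coming from \eqref{eq:def_etk}, namely $\kernel{\mu\nu}{\veps}(x,y)=\exp([\alpha(x)+\beta(y)-c(x,y)]/\veps)$ with $(\alpha,\beta)$ a pair of dual maximizers satisfying the Sinkhorn relations \eqref{eq:Sinkhorn} pointwise on $\SpX\times\SpX$ (the extended potentials of \Cref{rem:extended_duals}). The key observation is that the Sinkhorn relation for $\beta$, evaluated at $y\in\spt(\nu)$, can be rewritten as $1=\int_\SpX \exp([\alpha(x)+\beta(y)-c(x,y)]/\veps)\,\diff\mu(x)=\int_\SpX \kernel{\mu\nu}{\veps}(x,y)\,\diff\mu(x)$; this is the normalization already used in the proof of \Cref{prop:entr_trans_general}. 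Similarly the relation for $\alpha$ gives $\int_\SpX \kernel{\mu\nu}{\veps}(x,y)\,\diff\nu(y)=1$ for $x\in\spt(\mu)$.

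For the sup-norm bound \eqref{eq:oneside_infnorm_OTker}, I would fix $y\in\spt(\nu)$ and compare $\kernel{\mu\nu}{\veps}(x,y)$ to its values at nearby points $x'$. First, Lipschitz continuity of $c$ in its first argument (\Cref{ass:lip_c}) together with the fact (also from \eqref{eq:Sinkhorn}, via \Cref{rem:extended_duals}) that $\alpha$ inherits the modulus of continuity of $c$ — so $\alpha$ is $\Lip(c)$-Lipschitz — yields that for $d(x,x')<\veps$ one has $|\alpha(x)-\alpha(x')|\le \Lip(c)\,\veps$ and $|c(x,y)-c(x',y)|\le\Lip(c)\,\veps$, hence $\kernel{\mu\nu}{\veps}(x,y)\le \exp(2\Lip(c))\,\kernel{\mu\nu}{\veps}(x',y)$ for all $x'\in B(x,\veps)$. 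Integrating this against $\mu$ over $x'\in B(x,\veps)$, using the normalization $\int_\SpX\kernel{\mu\nu}{\veps}(x',y)\,\diff\mu(x')=1\ge\int_{B(x,\veps)}\kernel{\mu\nu}{\veps}(x',y)\,\diff\mu(x')$, gives $\mu(B(x,\veps))\cdot\kernel{\mu\nu}{\veps}(x,y)\le\exp(2\Lip(c))$. Wait — the bound in \eqref{eq:oneside_infnorm_OTker} is stated with $\nu(B(y,\veps))$ in the denominator, not $\mu(B(x,\veps))$, so the correct move is to play the symmetric game: compare $\kernel{\mu\nu}{\veps}(x,y)$ with $\kernel{\mu\nu}{\veps}(x,y')$ for $y'\in B(y,\veps)$ using Lipschitzness of $\beta$ and of $c$ in the second argument, obtaining $\kernel{\mu\nu}{\veps}(x,y)\le\exp(2\Lip(c))\,\kernel{\mu\nu}{\veps}(x,y')$, then integrate against $\nu$ over $y'\in B(y,\veps)$ and use $\int_\SpX\kernel{\mu\nu}{\veps}(x,y')\,\diff\nu(y')=1$ — but this normalization only holds for $x\in\spt(\mu)$. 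So instead one integrates and uses $\int_{B(y,\veps)}\kernel{\mu\nu}{\veps}(x,y')\,\diff\nu(y')\le\int_\SpX\kernel{\mu\nu}{\veps}(x,y')\,\diff\nu(y')$, and the latter equals $1$ only on $\spt(\mu)$; for general $x$ one needs a different route. The cleanest fix: bound $\int_\SpX\kernel{\mu\nu}{\veps}(x,y')\,\diff\nu(y')\le \exp(2\Lip(c))$ uniformly in $x\in\SpX$ — which itself follows by comparing $x$ to a nearest point in $\spt(\mu)$ is not available without extra assumptions; rather, one should directly estimate using the Sinkhorn formula for $\alpha$ extended to all $x$, which expresses $\exp(\alpha(x)/\veps)$ as the reciprocal of an integral, giving $\int_\SpX\kernel{\mu\nu}{\veps}(x,y')\,\diff\nu(y')=1$ for \emph{all} $x\in\SpX$ when $\alpha$ is the extended potential satisfying \eqref{eq:Sinkhorn} everywhere. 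This is exactly what \Cref{def:entOT_kernel} guarantees, so in fact $\int_\SpX\kernel{\mu\nu}{\veps}(x,y')\,\diff\nu(y')=1$ holds for all $x\in\SpX$, and the argument closes: $\kernel{\mu\nu}{\veps}(x,y)\cdot\nu(B(y,\veps))\le\exp(2\Lip(c))\int_{B(y,\veps)}\kernel{\mu\nu}{\veps}(x,y')\,\diff\nu(y')\le\exp(2\Lip(c))$.

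For the Lipschitz bound \eqref{eq:oneside_LipCons_OTker}, I would differentiate (or rather difference) $x\mapsto\kernel{\mu\nu}{\veps}(x,y)=\exp([\alpha(x)+\beta(y)-c(x,y)]/\veps)$: for $x,x'\in\SpX$,
\begin{align*}
  |\kernel{\mu\nu}{\veps}(x,y)-\kernel{\mu\nu}{\veps}(x',y)|
  &\le \frac{1}{\veps}\,\bigl(|\alpha(x)-\alpha(x')|+|c(x,y)-c(x',y)|\bigr)\cdot \max\{\kernel{\mu\nu}{\veps}(x,y),\kernel{\mu\nu}{\veps}(x',y)\}\\
  &\le \frac{2\Lip(c)}{\veps}\,d(x,x')\cdot\frac{\exp(2\Lip(c))}{\nu(B(y,\veps))},
\end{align*}
using $|e^a-e^b|\le|a-b|\max\{e^a,e^b\}$, the $\Lip(c)$-Lipschitzness of the extended $\alpha$, the Lipschitzness of $c$ in its first argument, and the already-proven sup-norm bound \eqref{eq:oneside_infnorm_OTker}. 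The analogous statements (bound for $x\in\spt(\mu)$ with $\mu(B(x,\veps))$ in the denominator, and Lipschitz continuity of $y\mapsto\kernel{\mu\nu}{\veps}(x,y)$) follow by symmetry, swapping the roles of $\mu,\nu$ and $\alpha,\beta$.

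\textbf{Main obstacle.} The one genuinely delicate point is the normalization step: making sure $\int_\SpX\kernel{\mu\nu}{\veps}(\cdot,y')\,\diff\nu(y')=1$ (needed to divide by $\nu(B(y,\veps))$) holds at the point $x$ in question. As noted, this is exactly the reason \Cref{def:entOT_kernel} and \Cref{rem:extended_duals} insist on using the \emph{extended} dual potentials that satisfy \eqref{eq:Sinkhorn} on all of $\SpX\times\SpX$ rather than merely $\mu$- and $\nu$-a.e.; with that convention in force the normalization is an identity everywhere and the estimates are routine. The rest is bookkeeping with the elementary inequality $|e^a-e^b|\le|a-b|\max\{e^a,e^b\}$ and the Lipschitz constants inherited from $c$.
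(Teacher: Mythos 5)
Your proposal is correct and follows essentially the same route as the paper: the sup-norm bound comes from the normalization $\int_{\SpX}\kernel{\mu\nu}{\veps}(x,y')\,\diff\nu(y')=1$ (valid for all $x$ thanks to the extended potentials), restricted to $B(y,\veps)$ and combined with the $\Lip(c)$-Lipschitzness of $\beta$ and of $c$ — which is algebraically identical to the paper's lower bound on $\exp(-\alpha(x)/\veps)$ — and the Lipschitz estimate then follows from the local Lipschitz constant of $\exp$ together with the sup-norm bound, exactly as in the paper. The only cosmetic difference is your initial detour toward a $\mu(B(x,\veps))$ denominator before correcting course; the final argument matches the paper's proof.
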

\begin{proof}
  Take $x$ in $(\SpX,d)$, $y\in\spt(\nu)$ and $(\alpha,\beta)$ a pair of optimal entropic transport potentials for \eqref{eq:def_entrOT_dual}. 
  As noted in Remark \ref{rem:extended_duals} the Lipschitz constant for $c$ is also valid for $\beta$, since
  \begin{align*}
    \beta(y) - \beta(y^{\prime})
    =&
    -\veps 
    \log\left(
    \frac{\int_{\SpX} 
      \exp\left(\frac{\alpha(x) - c(x, y)}{\veps}\right)
      \diff x}
         {\int_{\SpX} 
           \exp\left(\frac{\alpha(x) - c(x, y)}{\veps}\right)
           \exp\left(\frac{c(x, y) - c(x, y^{\prime})}{\veps}\right)
           \diff x}
         \right)
         \\\leq&
         -\veps 
         \log\left(
         \frac{\int_{\SpX} 
           \exp\left(\frac{\alpha(x) - c(x, y)}{\veps}\right)
           \diff x}
              {\int_{\SpX} 
                \exp\left(\frac{\alpha(x) - c(x, y)}{\veps}\right)
                \exp\left(\frac{\Lip(c) d(y, y^{\prime})}{\veps}\right)
                \diff x}
              \right)
              = 
              \Lip(c) d(y, y^{\prime}).
  \end{align*}
  Then, 
  \begin{align*}
    \exp\left(-\frac{\alpha(x)}{\veps}\right)
    &=
    \int_\SpX \exp\left(\frac{-c(x,y')+\beta(y')}{\veps}\right)\diff\nu(y')
    \\&\geq
    \int_{B(y,\veps)}
    \exp\left(\frac{-c(x,y)+\beta(y)}{\veps}\right)
    \exp\left(\frac{c(x,y)-c(x,y')+\beta(y')-\beta(y)}{\veps}\right)
    \,\diff\nu(y')
    \\&\geq
    \exp\left(\frac{-c(x,y)+\beta(y)}{\veps}\right)\int_{B(y,\veps)}\exp\left(-\frac{2 \Lip(c)\,d(y,y')}{\veps}\right)\diff\nu(y')
    \\&\geq
    \exp\left(\frac{-c(x,y)+\beta(y)}{\veps}\right) \exp(-2 \Lip(c))\,\nu(B(y,\veps)).
  \end{align*}
  Therefore, 
  \begin{align*}
    \kernel{\mu\nu}{\veps}(x,y)
    =
    \exp\left(\frac{\alpha(x) + \beta(y) - c(x, y)}{\veps}\right)
    \leq 
    \frac{\exp(2\Lip(c))}{\nu(B(y,\veps))}.
  \end{align*}
  Note that $\exp$ restricted to $(-\infty, a)$ is Lipschitz with constant $\exp(a)$.
  Therefore we get
  \begin{align*}
    \abs{\kernel{\mu\nu}{\veps}(x,y)-\kernel{\mu\nu}{\veps}(x',y)}
    \leq&
    \sup\left\{\kernel{\mu\nu}{\veps}(\cdot,y)\right\}\frac{\abs{-c(x,y)+\alpha(x)+c(x',y)-\alpha(x')}}{\veps}
    \\\leq& 
    \frac{\exp(2\Lip(c))}{\nu(B(y,\veps))}\frac{2\Lip(c)}{\veps}\,d(x,x'). \qedhere
  \end{align*}
\end{proof}
\begin{remark} \label{rem:better_bound_selfOT}
  The following special case will be relevant later. If $\mu=\nu$, taking $x=y$ in \eqref{eq:oneside_infnorm_OTker} yields for any $x$ in $\spt(\mu)$,
  \begin{align*}
    \exp\left(\frac{\Bar{\alpha}(x)}{\veps}\right)
    \leq
    \frac{\exp(\Lip(c))}{\sqrt{\mu(B(x,\veps))}}
    \lesssim
    \frac{1}{\sqrt{\mu(B(x,\veps))}}
  \end{align*}
  where $\Bar{\alpha}$ is the optimal self-transport potential in \eqref{eq:self_Sinkhorn}. For $(x,y)\in\spt(\mu)\times\spt(\mu)$ this yields the better bound
  \begin{align*}
    \kernel{\mu\mu}{\veps}(x,y)
    \leq 
    \frac{\exp\left(2 \Lip(c) - \frac{c(x, y)}{\veps}\right)}{\mu(B(x,\veps))}
    \lesssim
    \frac{\exp\left(-\tfrac{c(x,y)}{\veps}\right)}{\mu(B(x,\veps))}.
  \end{align*}
\end{remark}

These controls on entropic transport kernels $\kernel{\mu\nu}{\veps}$ imply the following control on the convolution construction \eqref{eq:def_entr_trans}.
\begin{corollary} \label{cor:lip_transfer_kernel}
  Let $\mu, \mu', \nu, \nu' \in \prob(\SpX)$, $\pi \in \Pi(\mu,\nu)$. The function $(\kernel{\mu'\mu}{\veps}:\pi:\kernel{\nu\nu'}{\veps})$ is bounded and Lipschitz-continuous with
  \begin{align} \label{eq:infnorm_transfer_kernel}
    \norm{(\kernel{\mu'\mu}{\veps}:\pi:\kernel{\nu\nu'}{\veps})}_\infty 
    &\lesssim
    \frac{1}{\max\{ \inf_{x\in\spt(\mu)}\mu(B(x,\veps)), \inf_{y\in\spt(\nu)} \nu(B(y,\veps))\}}, 
    \\
    \label{eq:lip_transfer_kernel}
    \Lip((\kernel{\mu'\mu}{\veps}:\pi:\kernel{\nu\nu'}{\veps})(\cdot, y))
    &\lesssim 
    \frac{1}{\veps \cdot \inf_{x\in\spt(\mu)}\mu(B(x,\veps))}
  \end{align}
  and a corresponding statement holds when exchanging the marginals.
  The multiplicative constants in both inequalities only depend on $\Lip(c)$.
\end{corollary}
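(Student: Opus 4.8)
The plan is to bound both quantities by inserting the definition \eqref{eq:def_entr_trans} and reducing everything to the pointwise estimates on the individual entropic kernels provided by \Cref{prop:bounds_kernel}. Since $(\kernel{\mu'\mu}{\veps}:\pi:\kernel{\nu\nu'}{\veps})(x',y') = \int_{\SpX^2} \kernel{\mu'\mu}{\veps}(x',x)\,\kernel{\nu\nu'}{\veps}(y,y')\,\diff\pi(x,y)$ and $\pi$ is a probability measure, the value at any $(x',y')$ is a $\pi$-average of products of two kernel values. The first factor $\kernel{\mu'\mu}{\veps}(x',x)$ is evaluated at $x \in \spt(\mu)$ (the first marginal of $\pi$), so by the ``analogous statement'' in \Cref{prop:bounds_kernel} it is bounded by $\exp(2\Lip(c))/\mu(B(x,\veps)) \le \exp(2\Lip(c))/\inf_{x\in\spt(\mu)}\mu(B(x,\veps))$; symmetrically the second factor is bounded using $\nu(B(y,\veps))$. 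Multiplying one of these two uniform bounds by the fact that the other kernel integrates to $1$ against its relevant marginal (equation \eqref{eq:Sinkhorn}, already exploited in \eqref{eq:def_entr_trans_marginals}) gives \eqref{eq:infnorm_transfer_kernel}: bound $\kernel{\mu'\mu}{\veps}(x',x)$ by the $\mu$-constant and integrate out $\kernel{\nu\nu'}{\veps}(y,y')$ against $\nu'$, or vice versa, then take the better (i.e.\ larger-denominator) of the two, which is the $\max$ in the statement.

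For the Lipschitz bound \eqref{eq:lip_transfer_kernel} I would fix $y'$ and estimate, for $x', x'' \in \SpX$,
\begin{align*}
  \bigl| (\kernel{\mu'\mu}{\veps}:\pi:\kernel{\nu\nu'}{\veps})(x',y') - (\kernel{\mu'\mu}{\veps}:\pi:\kernel{\nu\nu'}{\veps})(x'',y') \bigr|
  \le \int_{\SpX^2} \bigl| \kernel{\mu'\mu}{\veps}(x',x) - \kernel{\mu'\mu}{\veps}(x'',x)\bigr|\,\kernel{\nu\nu'}{\veps}(y,y')\,\diff\pi(x,y),
\end{align*}
then apply the per-kernel Lipschitz estimate \eqref{eq:oneside_LipCons_OTker} (in the form valid for $x \in \spt(\mu)$), which gives $|\kernel{\mu'\mu}{\veps}(x',x)-\kernel{\mu'\mu}{\veps}(x'',x)| \le \tfrac{2\Lip(c)\exp(2\Lip(c))}{\veps\,\mu(B(x,\veps))}\, d(x',x'') \le \tfrac{2\Lip(c)\exp(2\Lip(c))}{\veps\,\inf_{x\in\spt(\mu)}\mu(B(x,\veps))}\, d(x',x'')$; pulling this uniform factor out of the integral leaves $\int \kernel{\nu\nu'}{\veps}(y,y')\,\diff\pi(x,y)$, which after integrating the first coordinate of $\pi$ is $\int \kernel{\nu\nu'}{\veps}(y,y')\,\diff\nu(y) = 1$ by \eqref{eq:Sinkhorn}. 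This yields \eqref{eq:lip_transfer_kernel}, with a constant depending only on $\Lip(c)$, and the statement with marginals exchanged is obtained by the symmetric argument differentiating in $y'$.

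The argument is essentially bookkeeping once \Cref{prop:bounds_kernel} is in hand, so there is no serious obstacle; the only point requiring a little care is making sure the kernel is always evaluated at points in the support of the appropriate marginal so that the bounds of \Cref{prop:bounds_kernel} apply — this is exactly why the estimates involve $\inf_{x\in\spt(\mu)}\mu(B(x,\veps))$ and $\inf_{y\in\spt(\nu)}\nu(B(y,\veps))$ rather than infima over all of $\SpX$, and it works because the integration variables $x,y$ in \eqref{eq:def_entr_trans} range over $\spt(\pi) \subset \spt(\mu)\times\spt(\nu)$. A secondary bookkeeping point is absorbing the various factors of $\exp(\Lip(c))$ into the $\lesssim$ notation, which is legitimate since the statement explicitly allows the constant to depend on $\Lip(c)$.
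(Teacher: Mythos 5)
Your proof is correct and follows essentially the same route as the paper's: bound one kernel factor uniformly via \Cref{prop:bounds_kernel} over the support of the relevant marginal of $\pi$, integrate the other factor out to $1$ using \eqref{eq:Sinkhorn}, and take the better of the two resulting bounds for \eqref{eq:infnorm_transfer_kernel}; the Lipschitz estimate is obtained identically with \eqref{eq:oneside_LipCons_OTker} in place of \eqref{eq:oneside_infnorm_OTker}. The only (cosmetic) slip is that the bound $\kernel{\mu'\mu}{\veps}(x',x)\le \exp(2\Lip(c))/\mu(B(x,\veps))$ for $x\in\spt(\mu)$ is the direct statement \eqref{eq:oneside_infnorm_OTker} applied to the pair $(\mu',\mu)$ rather than the ``analogous statement''; the inequality you actually use is the right one.
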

\begin{proof}
  For simplicity write $t^\veps \assign (\kernel{\mu'\mu}{\veps}:\pi:\kernel{\nu\nu'}{\veps})$. Then, using \Cref{prop:bounds_kernel}, for $x',x'',y'\in\SpX$,
  \begin{equation*}
    \begin{split}
      t^\veps(x',y')
      =&
      \int_{\SpX^2}
      \kernel{\mu'\mu}{\veps}(x',x)\kernel{\nu\nu'}{\veps}(y,y')
      \,\diff\pi(x,y)
      \\\leq&
      \sup_{x\in\spt(\mu)} \kernel{\mu'\mu}{\veps}(x',x)
      \underbrace{
        \int_{\SpX^2}
        \kernel{\nu\nu'}{\veps}(y,y') 
        \,\diff\pi(x,y)  
      }_{=1}
      \stackrel{\eqref{eq:oneside_infnorm_OTker}}{\lesssim}
      \frac{1}{\inf_{x\in\spt(\mu)}\mu(B(x,\veps))}
    \end{split}
  \end{equation*}
  where we used that $\int_{\SpX} \kernel{\nu\nu'}{\veps}(y,y')\,\diff\nu(y)=1$ (by \eqref{eq:Sinkhorn}).
  Combining this with the same calculation with the roles of $\mu$ and $\nu$ swapped gives \eqref{eq:infnorm_transfer_kernel}.
  Similarly, we obtain \eqref{eq:lip_transfer_kernel}:
  \begin{align*}
      \abs{t^\veps(x',y')-t^\veps(x'',y')}
      \leq&
      \int_{\SpX^2}
      \abs{\kernel{\mu'\mu}{\veps}(x',x)-\kernel{\mu'\mu}{\veps}(x'',x)}\kernel{\nu\nu'}{\veps}(y,y')
      \,\diff\pi(x,y)
      \\\leq& 
      \sup_{x\in\spt(\mu)}
      \abs{\kernel{\mu'\mu}{\veps}(x',x)-\kernel{\mu'\mu}{\veps}(x'',x)} 
      \int_{\SpX^2}
      \kernel{\nu\nu'}{\veps}(y,y')
      \,\diff\pi(x,y)
      \\\lesssim& 
      \frac{d(x',x'')}{\veps\inf_{x\in\spt(\mu)}\mu(B(x,\veps))}.
  \end{align*}
\end{proof}
To control the terms depending on the mass of small balls in \eqref{eq:infnorm_transfer_kernel}, \eqref{eq:lip_transfer_kernel}, we make the following assumption on the marginal measures $\mu$ and $\nu$.
\begin{assumption} \label{ass:min_ball_mass}
  There exist constants $C_\mu$, $\mc{D}_\mu, \delta_\mu > 0$ such that for any $\delta \in (0,\delta_\mu]$ and $x\in\spt(\mu)$
  \begin{align*}
    \mu(B(x,\delta))\geq C_\mu\delta^{\mathcal{D}_\mu}.
  \end{align*}
  For simplicity we assume $C_{\mu} \leq 1$ and $\mathcal{D}_{\mu} \leq d$.
  Equivalent constants $C_\nu$, $\mc{D}_\nu, \delta_\nu$ exist for $\nu$.
\end{assumption}
The values $\mc{D}_\mu$ and $\mc{D}_\nu$ can be interpreted as effective dimensions of $\mu$ and $\nu$ which may be smaller than the dimension of $\SpX$.
\Cref{ass:min_ball_mass} transfers to the empirical approximations $\mu_N$, $\nu_N$ with high probability, as follows.
\begin{lemma} \label{lem:empirical_lip_bounds}
  Let $\mu_N \assign \frac{1}{N} \sum_{i=1}^N \delta_{x_i}$ be a random empirical approximation of $\mu$, generated from $N$ independent random variables $(x_i)_{i=1}^N$ with common law $\mu$. 
  Then for any $x\in\SpX$,
  \begin{equation*}
    \mathbb{P}\left(\mu_N(B(x,\veps))\leq \inf_{x'\in\spt(\mu)} \frac{\mu(B(x',\veps))}{2}\right) 
    \ \leq\ 
    \mathbb{P}\left(\mu_N(B(x,\veps))\leq \frac{\mu(B(x,\veps))}{2}\right) 
    \ \leq\ 
    \exp\left(-\frac{N}{2}\mu(B(x,\veps))^2\right).
  \end{equation*}
\end{lemma}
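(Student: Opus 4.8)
The leftmost inequality is just monotonicity of the probability measure and requires no real work: for $x\in\spt(\mu)$ — the regime in which the lemma is subsequently invoked, since it will be combined with a union bound over the sample points, which almost surely lie in $\spt(\mu)$ — one has $\inf_{x'\in\spt(\mu)}\mu(B(x',\veps))\le\mu(B(x,\veps))$, so the event $\{\mu_N(B(x,\veps))\le\tfrac12\inf_{x'\in\spt(\mu)}\mu(B(x',\veps))\}$ is contained in $\{\mu_N(B(x,\veps))\le\tfrac12\mu(B(x,\veps))\}$, and taking probabilities gives the claim. (When $\mu(B(x,\veps))=0$ all three quantities equal $1$, so nothing is lost.)

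For the rightmost inequality the plan is to apply a standard concentration estimate to the empirical mass of the fixed ball $B(x,\veps)$. I would write $Y_i\assign\mathds{1}_{B(x,\veps)}(x_i)$, $i=1,\ldots,N$; these are i.i.d.\ random variables valued in $[0,1]$ with mean $\mathbb{E}[Y_i]=\mu(B(x,\veps))=:p$, and $\mu_N(B(x,\veps))=\tfrac1N\sum_{i=1}^N Y_i$. Hoeffding's inequality for the lower tail yields, for every $t\ge0$,
\[
  \mathbb{P}\!\left(\tfrac1N\textstyle\sum_{i=1}^N Y_i \le p-t\right)\le\exp(-2Nt^2).
\]
Choosing $t=p/2$ rewrites the left-hand side as $\mathbb{P}(\mu_N(B(x,\veps))\le\tfrac12\mu(B(x,\veps)))$ and the right-hand side as $\exp(-\tfrac{N}{2}\mu(B(x,\veps))^2)$, which is exactly the asserted bound.

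I do not expect any genuine obstacle here; the statement is a textbook concentration estimate. The only points worth a moment's attention are (i) choosing a concentration inequality that produces precisely the stated shape of the bound — Hoeffding does, giving $\exp(-N\mu(B(x,\veps))^2/2)$, whereas the multiplicative Chernoff bound would yield the sharper but differently-shaped $\exp(-N\mu(B(x,\veps))/8)$ — and (ii) the degenerate case $\mu(B(x,\veps))=0$, where the estimate is trivial since then $\mu_N(B(x,\veps))=0$ almost surely and the right-hand side equals $1$. Equivalently, one could derive the bound by applying McDiarmid's inequality to the bounded-differences map $(x_1,\ldots,x_N)\mapsto\mu_N(B(x,\veps))$ (with differences bounded by $1/N$), or by a direct exponential Markov/Chernoff argument with optimisation in the exponential parameter; citing Hoeffding is the cleanest route.
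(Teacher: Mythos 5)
Your proof is correct and takes essentially the same route as the paper's: the paper likewise obtains the tail bound from Hoeffding's inequality (its Theorem~\ref{thm:Hoeffding}, applied to the sign-flipped indicators $X_i=-\mathds{1}_{x_i\in B(x,\veps)}$) with $s=\tfrac12\mu(B(x,\veps))$, and dismisses the first inequality as ``immediate''. Your remark that the first inequality actually needs $\inf_{x'\in\spt(\mu)}\mu(B(x',\veps))\le\mu(B(x,\veps))$ — guaranteed for $x\in\spt(\mu)$, which is the only case used downstream in \Cref{cor:empirical_lip_bounds} — is a legitimate subtlety that the paper's one-line dismissal glosses over.
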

\begin{proof}
  The first inequality is immediate.
  Denoting $X_i = -\mathds{1}_{x_i\in B(x,\veps)}$, we have $\mu_N(B(x,\veps)) = -\frac{1}{N}\sum_{i=1}^N X_i$ and 
  \begin{align*}
    \mathbb{E}(\mu_N(B(x,\veps)))
    =
    -\mathbb{E}(X_1) 
    =
    \mu(B(x,\veps)).
  \end{align*}
  The result follows immediately by applying Hoeffding's inequality (see \Cref{thm:Hoeffding} right below) with $s = \frac{1}{2}\mu(B(x,\veps))$.
\end{proof}

\begin{theorem}[Hoeffding \protect{\cite[Theorem 2]{hoeffding1963probability}}] \label{thm:Hoeffding}
  If $X_1, X_2, ..., X_N$ are independent random variables with $a_i \leq X_i \leq b_i$ for $i = 1, ..., N$ and $\Bar{X}\assign\frac{1}{N}\sum_{i=1}^N X_i$, then for $s>0$
  \begin{align*}
    \mathbb{P}(\Bar{X}-\mathbb{E}(\Bar{X})\geq s) 
    \ \leq\ 
    \exp\left(-\frac{2N^2 s^2}{\sum_{i=1}^N(b_i-a_i)^2}\right).
  \end{align*}
\end{theorem}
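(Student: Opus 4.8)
The plan is to prove this by the classical Chernoff (exponential Markov) method combined with Hoeffding's lemma on the moment generating function of a bounded, centered random variable. Writing $Y_i \assign X_i - \mathbb{E}(X_i)$, for every $\lambda > 0$ Markov's inequality applied to the increasing map $t \mapsto e^{\lambda t}$ gives
\begin{align*}
  \mathbb{P}\!\left(\bar{X} - \mathbb{E}(\bar{X}) \geq s\right)
  = \mathbb{P}\!\left(\sum_{i=1}^N Y_i \geq Ns\right)
  \leq e^{-\lambda Ns}\, \mathbb{E}\!\left(\exp\Big(\lambda \sum_{i=1}^N Y_i\Big)\right)
  = e^{-\lambda Ns} \prod_{i=1}^N \mathbb{E}\!\left(e^{\lambda Y_i}\right),
\end{align*}
where the last equality uses independence of the $X_i$ (hence of the $Y_i$). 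It therefore suffices to bound each factor $\mathbb{E}(e^{\lambda Y_i})$ and then to optimize over $\lambda$.

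Next I would establish Hoeffding's lemma: if $Y$ satisfies $\mathbb{E}(Y) = 0$ and $Y \in [a,b]$ almost surely, then $\mathbb{E}(e^{\lambda Y}) \leq \exp(\lambda^2 (b-a)^2 / 8)$ for all $\lambda \in \mathbb{R}$. This follows from convexity of $t \mapsto e^{\lambda t}$: for $y \in [a,b]$ one has $e^{\lambda y} \leq \tfrac{b-y}{b-a} e^{\lambda a} + \tfrac{y-a}{b-a} e^{\lambda b}$, and taking expectations with $\mathbb{E}(Y) = 0$ yields $\mathbb{E}(e^{\lambda Y}) \leq \tfrac{b}{b-a} e^{\lambda a} - \tfrac{a}{b-a} e^{\lambda b}$. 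Setting $p \assign -a/(b-a) \in [0,1]$ and $h \assign \lambda(b-a)$, the right-hand side equals $e^{L(h)}$ with $L(h) \assign -ph + \log(1-p+pe^h)$. A direct computation gives $L(0) = L'(0) = 0$ and $L''(h) = q(h)\,(1-q(h))$ where $q(h) = pe^h/(1-p+pe^h) \in [0,1]$, hence $L''(h) \leq 1/4$; Taylor's theorem with Lagrange remainder then gives $L(h) \leq h^2/8 = \lambda^2(b-a)^2/8$. Applying this to $Y_i$, which is centered and takes values in an interval of length exactly $b_i - a_i$, yields $\mathbb{E}(e^{\lambda Y_i}) \leq \exp(\lambda^2 (b_i - a_i)^2 / 8)$.

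Combining the two steps,
\begin{align*}
  \mathbb{P}\!\left(\bar{X} - \mathbb{E}(\bar{X}) \geq s\right)
  \leq \exp\!\left(-\lambda N s + \frac{\lambda^2}{8} \sum_{i=1}^N (b_i - a_i)^2\right),
\end{align*}
and the final step is to minimize this quadratic in $\lambda$ over $\lambda > 0$. The (positive, hence admissible) minimizer is $\lambda^\ast = 4Ns / \sum_{i=1}^N (b_i - a_i)^2$, and substituting it produces the exponent $-2N^2 s^2 / \sum_{i=1}^N (b_i - a_i)^2$, which is the claimed bound. The only genuinely non-routine ingredient is Hoeffding's lemma, and within its proof the one point that needs care is the uniform bound $L''(h) \leq 1/4$, which itself reduces to the elementary inequality $t(1-t) \leq 1/4$ on $[0,1]$; the Chernoff bounding, the factorization by independence, and the one-variable optimization are all mechanical.
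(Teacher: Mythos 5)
Your proof is correct and is exactly the classical argument from the cited reference (Chernoff bounding via Markov's inequality on the exponential, Hoeffding's lemma for the moment generating function proved through convexity and the bound $L''\leq 1/4$, then optimization in $\lambda$); the paper itself states this theorem without proof, citing Hoeffding's original work, so there is nothing to compare against beyond noting that your route is the standard one.
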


This allows then to give adjusted versions of \Cref{prop:bounds_kernel} and \Cref{cor:lip_transfer_kernel} for their empirical approximations.
\begin{corollary} \label{cor:empirical_lip_bounds}
  Let Assumptions \ref{ass:lip_c} and \ref{ass:min_ball_mass} hold. 
  Let $N \in \N, \veps \in (0, \min\{\delta_{\mu}, \delta_{\nu}\}]$ and
    \begin{align*}
      \tau_{\mu} 
      := 
      N \exp\left(-\frac{N}{2} \left(C_{\mu} \veps^{\mathcal{D}_{\mu}}\right)^2\right),
      \qquad
      \tau_{\nu} 
      := 
      N \exp\left(-\frac{N}{2} \left(C_{\nu} \veps^{\mathcal{D}_{\nu}}\right)^2\right)
    \end{align*}
    The following statements hold.
    \begin{enumerate}
    \item With probability at least $1- \tau_{\nu}$, for $y\in\spt(\nu_N)$
      \begin{align*}
        \norm{\kernel{\mu_N\nu_N}{\veps}(\cdot,y)}_\infty
        \lesssim
        \frac{1}{\veps^{\mathcal{D}_\nu}}
        ,\qquad 
        \Lip (\kernel{\mu_N\nu_N}{\veps}(\cdot,y))
        \lesssim
        \frac{1}{\veps^{1+\mathcal{D}_\nu}}
      \end{align*}
      where the constants depend only on $C_{\nu}$ and $\Lip(c)$.
    \item For some $\mu', \nu' \in \prob(\SpX)$, denoting $t^\veps_N\assign(\kernel{\mu'\mu_N}{\veps}:\pi_N:\kernel{\nu_N\nu'}{\veps})$, one has with probability at least $1-\tau_{\mu}$
      \begin{align*}
        \norm{t^\veps_N}_\infty
        \lesssim
        \frac{1}{\veps^{\mathcal{D}_\mu}}
        ,\qquad
        \Lip(t^\veps_N(\cdot,y))
        \lesssim
        \frac{1}{\veps^{1+\mathcal{D}_\mu}}
      \end{align*}
      where the constants depend only on $C_{\mu}$ and $\Lip(c)$.
    \end{enumerate}
    Analogous statements with swapped marginals hold and with probability at least $1 - (\tau_{\mu} + \tau_{\nu})$ all inequalities above hold simultaneously.
\end{corollary}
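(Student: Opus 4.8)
The plan is to reduce all four inequalities (and their swapped–marginal variants) to a single high-probability lower bound on the $\mu_N$- and $\nu_N$-mass of $\veps$-balls centred at the sample points, and then substitute into the kernel estimates already at hand. Concretely, \Cref{prop:bounds_kernel} applied with marginals $\mu_N,\nu_N$ bounds $\norm{\kernel{\mu_N\nu_N}{\veps}(\cdot,y)}_\infty$ and $\Lip(\kernel{\mu_N\nu_N}{\veps}(\cdot,y))$ for $y\in\spt(\nu_N)$ by $\exp(2\Lip(c))/\nu_N(B(y,\veps))$ and $2\Lip(c)\exp(2\Lip(c))/(\veps\,\nu_N(B(y,\veps)))$; and \Cref{cor:lip_transfer_kernel} applied with $\mu_N,\nu_N,\pi_N$ in place of $\mu,\nu,\pi$ (note $\pi_N\in\Pi(\mu_N,\nu_N)$) and the given auxiliary marginals $\mu',\nu'$ bounds $\norm{t^\veps_N}_\infty$ and $\Lip(t^\veps_N(\cdot,y))$ by $\lesssim 1/\inf_{x\in\spt(\mu_N)}\mu_N(B(x,\veps))$ and $\lesssim 1/(\veps\inf_{x\in\spt(\mu_N)}\mu_N(B(x,\veps)))$, with constants depending only on $\Lip(c)$. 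So everything rests on the following claim, which I would establish first.

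\emph{Claim.} With probability at least $1-\tau_\nu$ one has $\nu_N(B(y,\veps))\ge\tfrac12 C_\nu\veps^{\mathcal D_\nu}$ for all $y\in\spt(\nu_N)$, and symmetrically with probability at least $1-\tau_\mu$ one has $\mu_N(B(x,\veps))\ge\tfrac12 C_\mu\veps^{\mathcal D_\mu}$ for all $x\in\spt(\mu_N)$. Since $\spt(\nu_N)=\{y_1,\dots,y_N\}$ and $y_i\in\spt(\nu)$ almost surely, I would first apply a union bound over $i=1,\dots,N$, reducing matters to $\mathbb{P}\big(\nu_N(B(y_i,\veps))\le\tfrac12 C_\nu\veps^{\mathcal D_\nu}\big)$ for a single index. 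I would then condition on $y_i$: the remaining samples $y_j$, $j\neq i$, are i.i.d.\ with law $\nu$ and independent of $y_i$, so appending one fresh i.i.d.\ point and discarding the (deterministic, given $y_i$) contribution of $y_i$ itself exhibits $\nu_N(B(y_i,\veps))$ as dominating the mass an independent $N$-point i.i.d.\ empirical measure puts on the same ball, whose centre is now deterministic conditionally on $y_i$. \Cref{lem:empirical_lip_bounds} then bounds this conditional probability by $\exp(-\tfrac N2\,\nu(B(y_i,\veps))^2)$, and \Cref{ass:min_ball_mass} (applicable since $\veps\le\delta_\nu$ and $y_i\in\spt(\nu)$) turns it into $\exp(-\tfrac N2(C_\nu\veps^{\mathcal D_\nu})^2)$. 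Taking expectation over $y_i$ and summing over $i$ yields the prefactor $N$ in $\tau_\nu$; the argument for $\mu_N$ is verbatim the same.

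Granting the claim, the rest is mechanical. On the $\nu$-event, the bounds above from \Cref{prop:bounds_kernel} become $\norm{\kernel{\mu_N\nu_N}{\veps}(\cdot,y)}_\infty\le 2\exp(2\Lip(c))/(C_\nu\veps^{\mathcal D_\nu})\lesssim\veps^{-\mathcal D_\nu}$ and $\Lip(\kernel{\mu_N\nu_N}{\veps}(\cdot,y))\lesssim\veps^{-1-\mathcal D_\nu}$ with constants depending only on $C_\nu,\Lip(c)$, which is the first statement. On the $\mu$-event, inserting $\inf_{x\in\spt(\mu_N)}\mu_N(B(x,\veps))\ge\tfrac12 C_\mu\veps^{\mathcal D_\mu}$ into \eqref{eq:infnorm_transfer_kernel} and \eqref{eq:lip_transfer_kernel} gives $\norm{t^\veps_N}_\infty\lesssim\veps^{-\mathcal D_\mu}$ and $\Lip(t^\veps_N(\cdot,y))\lesssim\veps^{-1-\mathcal D_\mu}$ with constants depending only on $C_\mu,\Lip(c)$, which is the second statement. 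The swapped-marginal versions follow from the symmetric estimates already built into \Cref{prop:bounds_kernel} and \Cref{cor:lip_transfer_kernel}, and the final simultaneous statement is one more union bound, since the $\mu$- and $\nu$-exceptional events (not assumed independent) have combined probability at most $\tau_\mu+\tau_\nu$.

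The only genuinely delicate point is the Claim: \Cref{lem:empirical_lip_bounds} controls $\nu_N(B(x,\veps))$ only for a \emph{fixed} centre, whereas here the balls are centred at the random samples $y_i$. The conditioning-and-coupling step above is what makes the reduction legitimate (one could instead apply \Cref{thm:Hoeffding} directly to the $N-1$ samples other than $y_i$, at the price of slightly worse constants), and the union bound over the $N$ centres is precisely what forces the factor $N$ in the definitions of $\tau_\mu$ and $\tau_\nu$. I do not expect any difficulty beyond this point.
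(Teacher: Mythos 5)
Your proposal is correct and follows essentially the same route as the paper: a union bound over the $N$ sample points combined with \Cref{lem:empirical_lip_bounds} and \Cref{ass:min_ball_mass} to get the uniform lower bound on empirical ball masses, which is then inserted into \Cref{prop:bounds_kernel} and \Cref{cor:lip_transfer_kernel}. Your conditioning-and-coupling argument for the random ball centres is in fact more explicit than the paper's proof, which applies the fixed-centre lemma to the points of $\spt(\mu_N)$ without comment; this is a welcome clarification rather than a deviation.
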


\begin{proof}
  Given the assumptions it follows from \Cref{lem:empirical_lip_bounds} that for any $x \in \SpX$
  \begin{align*}
    \mathbb{P}\left(\mu_N(B(x,\veps)) > \inf_{x'\in\spt(\mu)} \frac{\mu(B(x',\veps))}{2}\right)
    \geq 
    1-\exp\left(-\frac{N}{2} \mu(B(x,\veps))^2\right)
    \geq 
    1 - \frac{\tau_{\mu}}{N}
  \end{align*}
  and therefore (since $\abs{\spt(\mu_N)} = N$)
  \begin{equation*}
    \mathbb{P}\left(\min_{x'\in\spt(\mu_N)}\mu_N(B(x',\veps))> \inf_{x'\in\spt(\mu)}\frac{\mu(B(x',\veps))}{2}\right)\geq 1-N\frac{\tau_{\mu}}{N}=1-\tau_{\mu}.
  \end{equation*}
  The corresponding statement for $\nu, \nu_N$ follows in the same way. 
  Consequently, with probability $\geq 1 - \tau_{\mu}$ (resp.~$1 - \tau_{\nu}$), we can replace in \Cref{prop:bounds_kernel} and \Cref{cor:lip_transfer_kernel} for $\mu_N$, $\nu_N$ and $\pi_N\in\Pi(\mu_N,\nu_N)$, the masses $\mu_N(B(x,\veps))$ and $\nu_N(B(x,\veps))$ by the corresponding ones for $\mu/2$ and $\nu/2$ and then apply \Cref{ass:min_ball_mass}. For example, for $y\in\spt(\nu_N)$, one finds with probability at least $1 - \tau_{\nu}$
  \begin{equation*}
    \Lip (\kernel{\mu_N\nu_N}{\veps}(\cdot,y))
    \stackrel{\eqref{eq:oneside_LipCons_OTker}}{\leq}
    \frac{2\Lip(c) \exp(2\Lip(c))}
         {\min_{y\in \spt(\nu_N)}\veps\nu_N(B(y,\veps))}
         \lesssim
         \frac{1}
              {\inf_{y\in \spt(\nu)} \veps\nu(B(y,\veps))}
              \stackrel{\ref{ass:min_ball_mass}}{\lesssim}
              \frac{1}
                   {\veps^{1+\mathcal{D}_\nu}}. \qedhere
  \end{equation*}
\end{proof}

\subsection[Quantitative convergence analysis T\textasciicircum{}eps\_N to T\textasciicircum{}eps]{Quantitative convergence analysis $T^\veps_N \to T^\veps$} \label{subsec:quant_cvg}
We begin this section by bounding the discrepancy in Hilbert--Schmidt norm between the extension $T^{A,\veps}_N$ and the intermediate auxiliary operator $T^{B,\veps}_N$.
\begin{theorem} \label{prop:cross_operators}
  Let Assumptions \ref{ass:lip_c} and \ref{ass:min_ball_mass} hold. Let $N \in \N$, $\veps \in (0,\min\{\delta_\mu,\delta_\nu,1\}]$, and $\tau<1$ such that
    \begin{align*}
      \tau 
      \geq 
      2 N \exp\left(-\frac{N}{2} \min\left\{\left(C_\mu \veps^{\mathcal{D}_\mu}\right)^2,\left(C_\nu \veps^{\mathcal{D}_\nu}\right)^2\right\}\right).
    \end{align*}
    Then with a probability of at least $1-\tau$ we have
    \begin{equation} \label{eq:piecewise_constant_cv}
      \norm{T^{A,\veps}_N - T^{B,\veps}_N}_{\HS}
      =
      \left\| t_N^{A,\veps}-t_N^\veps \right\|_{\Leb^2(\mu\otimes\nu)}
      \lesssim 
      \veps^{-1-\max\{\mathcal{D}_\mu,\mathcal{D}_\nu\}}\sqrt{W_2^2(\mu_N,\mu)+W_2^2(\nu_N,\nu)}.
    \end{equation}
\end{theorem}
\begin{proof}
  Both $T^{A,\veps}_N$ and $T^{B,\veps}_N$ can be expressed by integral kernels, see \eqref{eq:tNAeps} and \eqref{eq:def_mixed_op}. Therefore the Hilbert--Schmidt norm of their difference is given by the $\Leb^2(\mu \otimes \nu)$-norm of the difference between their kernels.
  \begin{align*}
    \norm{T^{A,\veps}_N -T^{B,\veps}_N}^2_{\HS}
    &=
    \left\| t_N^{A,\veps}-t_N^\veps \right\|^2_{\Leb^2(\mu\otimes\nu)} 
    \\&=
    \int_{\SpX^2} \left( \int_{\SpX^2} t_N^\veps(x',y')\,\diff \gamma^\mu_N(x'|x)\,\diff \gamma^\nu_N(y'|y)-t^\veps_N(x,y)\right)^2 \diff \mu(x)\,\diff \nu(y) 
    \\&\leq 
    \int_{\SpX^4} \left(t^\veps_N(x',y') - t_N^\veps(x,y)\right)^2
    \diff \gamma^\mu_N(x'|x)\,\diff \gamma^\nu_N(y'|y)\, \diff \mu(x)\, \diff \nu(y)
    \\&\leq 
    \int_{\SpX^4} (\Lip(t^{\veps}_N))^2 \left(d^2(x, x') + d^2(y, y')\right)
    \diff \gamma^\mu_N(x'|x)\,\diff \gamma^\nu_N(y'|y)\, \diff \mu(x)\, \diff \nu(y)
    \\&=
    (\Lip(t_N^\veps))^2 \left(W_2^2(\mu_N,\mu)+W_2^2(\nu_N,\nu)\right)
  \end{align*}
  where $\Lip(t_N^{\veps})$ is a Lipschitz constant with respect to the $2$-product metric. 
  The last equality follows from $\gamma^{\mu}_N, \gamma^{\nu}_N$ being optimal transport plans. 
  We can use the marginal Lipschitz constants from \Cref{cor:empirical_lip_bounds} to construct this. Since by assumption $\tau \geq \tau_{\mu} + \tau_{\nu}$, we get with a probability of at least $1 - \tau$
  \begin{align*}
    \abs{t^{\veps}_N(x', y') - t^{\veps}_N(x, y)}
    &\leq
    \abs{t^{\veps}_N(x', y') - t^{\veps}_N(x, y')}
    + \abs{t^{\veps}_N(x, y') - t^{\veps}_N(x, y)}
    \\&\leq
    \Lip(t^{\veps}_N(\cdot,y'))d(x, x')
    + \Lip(t^{\veps}_N(x,\cdot))d(y, y')
    \\&\leq
    \underbrace{\max\{\Lip(t^{\veps}_N(\cdot,y')), \Lip(t^{\veps}_N(x,\cdot))\}}_{=: \Lip(t^{\veps}_N)}
    \sqrt{d^2(x,x') + d^2(y,y')}
    .\qedhere
  \end{align*}
\end{proof}

The Hilbert--Schmidt distance between the two auxiliary operators $T^{B,\veps}_N$ and $T^{C,\veps}_N$ can be bounded using sample complexity estimates for entropic dual potentials. These estimates require higher differentiability for the cost used in transport, which we now introduce.

\begin{assumption} \label{ass:Cs+1_c}
  $\SpX$ is a compact subset of $\R^d$ with Lipschitz boundary and the cost $c$ is in $\mc{C}^{s+1}(\SpX\times\SpX)$ for some $s>d/2$.
\end{assumption}

\begin{theorem} \label{prop:prob_bias_bound}
  Let Assumptions \ref{ass:lip_c}, \ref{ass:min_ball_mass} and \ref{ass:Cs+1_c} hold and assume $\veps \leq 1$.
  Then there exist positive constants $K$, $C$, $L$ 
  (only depending on $c$ and $\SpX$) 
  such that for any $\tau \in (0,1)$, $\veps>0$, 
  and $N$ sufficiently large to satisfy
  \begin{align*}
    \frac{\log(3/\tau)}{\veps^{ d/2}\sqrt{N}}\left(4L\exp(C/\veps) + 2\sqrt{K}\right) \leq 1
  \end{align*}
  we have with a probability of at least $1-4\tau$ 
  \begin{align*} \label{eq:bias_bound}
    \norm{T^{B,\veps}_N-T^{C,\veps}_N}_{\HS}
    \leq
    \norm{t^{\veps}_N-t^{C,\veps}_N}_\infty
    \lesssim
    \frac{\veps^{-(d/2 + \mathcal{D}_{\mu} + \mathcal{D}_{\nu})}}{\sqrt{N}} 
    \exp\left(\frac{C}{\veps}\right)
    \log\left(\frac{3}{\tau}\right).
  \end{align*}
\end{theorem}

\begin{proof}
  Similar to before, both $T^{B,\veps}_N$ and $T^{C,\veps}_N$ are kernel operators. Therefore
  \begin{equation*}
    \norm{T^{B,\veps}_N-T^{C,\veps}_N}^2_{\HS}
    =
    \int_{\SpX^2}
    \abs{t^\veps_N(x,y)-t^{C,\veps}_N(x,y)}^2
    \diff\mu(x)\diff\nu(y)
    \leq
    \norm{t^{\veps}_N - t^{C,\veps}_N}_{\infty}^2,
  \end{equation*}
  showing the first inequality.
  Recall the kernel definitions \eqref{eq:def_mixed_op} and \eqref{eq:def_kcontpidisc_op}. For $(x,y)\in\SpX^2$,
  \begin{align*}
    \abs{t^\veps_N(x,y)-t^{C,\veps}_N(x,y)}
    \leq&
    \int_{\SpX} 
    \abs{
      \kernel{\mu_N\mu_N}{\veps}(x,x') \kernel{\nu_N\nu_N}{\veps}(y,y') 
      - \kernel{\mu\mu}{\veps}(x,x') \kernel{\nu\nu}{\veps}(y,y')
    } \diff \pi_N(x', y')
    \\\leq &
    \int_{\SpX} \kernel{\mu_N\mu_N}{\veps}(x,x')\abs{\kernel{\nu_N\nu_N}{\veps}(y,y')-\kernel{\nu\nu}{\veps}(y,y')}\diff\pi_N(x',y')
    \\&+\int_{\SpX} \kernel{\nu\nu}{\veps}(y,y')\abs{\kernel{\mu_N\mu_N}{\veps}(x,x')-\kernel{\mu\mu}{\veps}(x,x')}\diff\pi_N(x',y')
    \\\leq&
    \sup_{y'\in\spt(\nu_N)}\norm{\kernel{\nu_N\nu_N}{\veps}(\cdot,y')-\kernel{\nu\nu}{\veps}(\cdot,y')}_\infty
    \\&+\sup_{\substack{x'\in\spt(\mu_N)\\y'\in\spt(\nu_N)}}\norm{\kernel{\nu\nu}{\veps}(\cdot,y')}_\infty\norm{\kernel{\mu_N\mu_N}{\veps}(\cdot,x')-\kernel{\mu\mu}{\veps}(\cdot,x')}_\infty
    \stepcounter{equation}\tag{\theequation}
    \label{eq:bound_t_tC_by_kernel_differences}
  \end{align*}
  where in the second inequality, we used $\int\kernel{\mu_N\mu_N}{\veps}(x,x')\,\diff\mu_N(x')=1$.
  Let $K\assign\frac{\exp(2\Lip(c))}{2 \min\{C_{\mu}, C_{\nu}\}}$.
  Due to \Cref{prop:bounds_kernel} with \Cref{ass:min_ball_mass} we have for $x' \in \spt(\mu)$
  \begin{align} \label{eq:kernel_bound_with_min_ball_mass}
    \kernel{\mu\mu}{\veps}(x,x') \leq \frac{2 K}{\veps^{\mathcal{D}_{\mu}}}.
  \end{align}
  We now derive a bound for $\norm{\kernel{\mu_N\mu_N}{\veps}(\cdot,x')-\kernel{\mu\mu}{\veps}(\cdot,x')}_\infty$ for points $x'\in\spt(\mu_N)\subset\spt(\mu)$, the other difference can be controlled in the same way.
  Let $\bar{\alpha}^{\veps}$ and $\bar{\alpha}^{\veps}_N$ be the optimal symmetric duals for entropic self-transport of $\mu$ and $\mu_N$ respectively.
  Let $\sigma_N\in \R$ such that $\bar{\alpha}^\veps_N(x_0)=\bar{\alpha}^\veps(x_0) - \sigma_N$ for some fixed $x_0 \in \SpX$.
  According to \cite[Lemma E.4, Proposition E.5]{luise2019sinkhorn} (and using \cite[Proposition 1]{Genevay19a} to bound the $r$ appearing in $\bar{r}$ defined in \cite[Equation (E.3)]{luise2019sinkhorn}), there exist constants $L$, $C$ depending only on $c$ and $\SpX$ such that we have with a probability of at least $1-\tau$ 
  \begin{align} \label{eq:dual_potential_convergence}
    \norm{\bar{\alpha}^\veps_N + \sigma_N -\bar{\alpha}^\veps}_\infty
    \leq 
    L \veps^{1 - \lfloor d/2 \rfloor}
    \frac{\exp(C/\veps)}{\sqrt{N}}\log\left(\frac 3\tau\right).
  \end{align}
  For any fixed $x'\in \SpX$, we have
  \begin{align*}
    \frac{2\sigma_N}{\veps} 
    &=
    \log\left(
    \exp\left(\frac{2 \sigma_N}{\veps}\right)
    \int_{\SpX} \kernel{\mu_N\mu_N}{\veps}(x,x') \,\diff\mu_N(x)
    \right)
    \\&=
    \log\left( 
    \int_{\SpX}
    \exp\left(
    \frac{
      \bar{\alpha}_N^\veps(x) + \bar{\alpha}_N^\veps(x') 
      - \bar{\alpha}^\veps(x) - \bar{\alpha}^\veps(x') + 2\sigma_N
    }{\veps}
    \right) 
    \kernel{\mu\mu}{\veps}(x,x') 
    \,\diff\mu_N(x)
    \right)
    \\
    \Rightarrow \quad
    \abs{\frac{2\sigma_N}{\veps}}
    &\leq
    \frac{2}{\veps}
    \norm{\bar{\alpha}^\veps_N + \sigma_N -\bar{\alpha}^\veps}_{\infty}
    + \abs{\log\left(\int_{\SpX} \kernel{\mu\mu}{\veps}(x,x') \,\diff\mu_N(x)\right)}.
    \stepcounter{equation}\tag{\theequation}
    \label{eq:bound_sigma_N}
  \end{align*}
  Using \eqref{eq:kernel_bound_with_min_ball_mass}, by \Cref{thm:Hoeffding} we have with a probability of at least $1-\tau$
  \begin{align*}
    \abs{\int\kernel{\mu\mu}{\veps}(x,x')\diff\mu_N(x) - 1} 
    \leq 
    \sqrt{\frac{K \log(2/\tau)}{N\veps^{\mathcal{D}_\mu}}}
    \leq
    \frac{1}{2}
  \end{align*}
  where the last inequality follows from the assumption.
  Note that for $\abs{a - 1} \leq \frac{1}{2}$ we have $\abs{\log(a)}\leq 2\abs{a - 1}$, hence by \eqref{eq:bound_sigma_N} with a probability of at least $1-\tau$ 
  \begin{align*}
    \abs{\frac{2\sigma_N}{\veps}}
    &\leq
    \frac{2}{\veps}
    \norm{\bar{\alpha}^\veps_N + \sigma_N -\bar{\alpha}^\veps}_\infty 
    + 2\sqrt{\frac{K \log(2/\tau)}{N\veps^{\mathcal{D}_\mu}}}.
    \stepcounter{equation}\tag{\theequation}
    \label{eq:bound_sigma_N_2}
  \end{align*}
  Using \eqref{eq:dual_potential_convergence} and \eqref{eq:bound_sigma_N_2}, by the union bound with a probability of at least $1-2\tau$ we have
  \begin{align*}
    \abs{\frac{\bar{\alpha}_{N}^\veps(x)-\bar{\alpha}^{\veps}(x)+\bar{\alpha}_N^\veps(x')-\bar{\alpha}^\veps(x')}{\veps}}
    &\leq
    \frac{2}{\veps} 
    \norm{\bar{\alpha}^\veps_N + \sigma_N -\bar{\alpha}^\veps}_\infty
    + \abs{\frac{2\sigma_N}{\veps}}
    \\
    &\leq
    \frac{4}{\veps}
    \norm{\bar{\alpha}^\veps_N + \sigma_N -\bar{\alpha}^\veps}_\infty 
    + 2\sqrt{\frac{K \log(2/\tau)}{N\veps^{\mathcal{D}_\mu}}} 
    \\&\leq
    \frac{\log(3/\tau)}{\veps^{d/2} \sqrt{N}}\left(4L\exp(C/\veps) 
    + 2\sqrt{K}\right) 
    \leq 
    1
    \stepcounter{equation}\tag{\theequation}
    \label{eq:joint_dual_difference}
  \end{align*}
  where the last inequality corresponds to the assumption. 
  Note that for any $\abs{a}\leq 1$ we have $\abs{\exp(a) - 1} \leq 2\abs{a}$.
  Using this bound with \eqref{eq:joint_dual_difference} and \eqref{eq:kernel_bound_with_min_ball_mass} yields with a probability of at least $1 - 2 \tau$
  \begin{align*}
    \norm{
      \kernel{\mu_N\mu_N}{\veps}(\cdot,x')
      - \kernel{\mu\mu}{\veps}(\cdot,x')
    }_\infty
    &\leq 
    \frac{2K}{\veps^{\mathcal{D}_\mu}}
    \norm{\exp\left(\frac{\bar{\alpha}_{N}^\veps(\cdot)-\bar{\alpha}^{\veps}(\cdot)+\bar{\alpha}_N^\veps(x')-\bar{\alpha}^\veps(x')}{\veps}\right)-1}_\infty
    \\&\leq
    \frac{4K\log(3/\tau)}{\veps^{\mathcal{D}_{\mu}+d/2}\sqrt{N}}\left(4L\exp(C/\veps) + 2\sqrt{K}\right).
    \stepcounter{equation}\tag{\theequation}
    \label{eq:kernel_difference}
  \end{align*}    
  Combining 
  \eqref{eq:bound_t_tC_by_kernel_differences} with 
  \eqref{eq:kernel_bound_with_min_ball_mass} and 
  \eqref{eq:kernel_difference} we get with a probability of at least $1 - 4 \tau$ (since we need the shown bounds for both $\mu$ and $\nu$)
  \begin{align*}
    \norm{t^{\veps}_N - t^{C,\veps}_N}_{\infty}
    &\leq
    \frac{4K\log(3/\tau)}{\veps^{\mathcal{D}_{\nu}+d/2}\sqrt{N}}
    \left(4L\exp(C/\veps) + 2\sqrt{K}\right)
    +
    \frac{2 K}{\veps^{\mathcal{D}_{\nu}}}
    \frac{4K\log(3/\tau)}{\veps^{\mathcal{D}_\mu+d/2}\sqrt{N}}\left(4L\exp(C/\veps) + 2\sqrt{K}\right).
    \qedhere
  \end{align*}
\end{proof}

\begin{remark}
  For fixed $\veps>0$, qualitative convergence $\|t^{\veps}_N-t^{C,\veps}_N\|_\infty \to 0$ and $\|T^{B,\veps}_N-T^{C,\veps}_N\|_{\HS} \to 0$ as $N \to \infty$ can be established for compact metric spaces $\SpX$ that are not subsets of $\R^d$ with continuous cost functions $c \in \Cont(\SpX \times \SpX)$ by compactness arguments where one uses that the entropic transport kernels $\kernel{\mu_N\nu_N}{\veps}$ are equicontinuous with their modulus of continuity only depending on $c$ and $\veps$, but not on $\mu_N$ or $\nu_N$.
  This proof strategy was used in \cite{EntropicTransfer22} for the single-smoothed transfer operators (see Remark \ref{rem:single_smoothing}) and it can be adapted to the setting of this article.
\end{remark}

Finally, we bound the distance between $T^{C,\veps}_N$ and $T^{\veps}_N$.
\begin{theorem} \label{prop:prob_var_bound}
  Let Assumptions \ref{ass:lip_c} and \ref{ass:min_ball_mass} hold and let $\SpX$ be a compact subset of $\R^d$ with $\diam(\SpX) \geq 1$ for some $d \in \N$. 
  Let $\tau \in (0,1)$ and assume $\veps \leq 1$ and $N \geq 3$. 
  Then with a probability of at least $1-\tau$ we have
  \begin{align*}
    \norm{T^{C,\veps}_N - T^{\veps}}_{\HS}
    \leq
    \norm{t_N^{C,\veps} - t^\veps}_\infty 
    \lesssim
    \frac{\sqrt{2d}}{\veps^{1+\mathcal{D}_\mu + \mathcal{D}_{\nu}}} \sqrt{\frac{\log N}{N}} \sqrt{\log\left(\frac{4 \sqrt{2} \diam(\SpX)}{\tau}\right)}
  \end{align*}
  with a constant depending only on $C_{\mu}, C_{\nu}$ and $\Lip(c)$.
\end{theorem}

The proof of \Cref{prop:prob_var_bound} is based on the following Lemma, which is a variant of a standard result on the concentration of empirical processes in the spirit of \cite{cucker2002mathematical}, adapted to our setting.

\begin{lemma} \label{lem:covering_number}
  Let $\mathcal{T}$ be a compact subset of a finite-dimensional vector space with norm $\norm{\cdot}$ with $\diam(\mathcal{T}) \geq 1$.
  Let $\mathcal{Y}$ be a compact space and $\rho \in \mathcal{P}(\mathcal{Y})$.
  Suppose we have a parametrized function class $\mathcal{F} \assign \{f_t \in \Leb^{\infty}(\rho) \mid t \in \mathcal{T}\}$ such that there exist constants $C, L > 0$ for which
  \begin{align*}
    \forall t, t' \in \mathcal{T}:
    \ 
    \norm{f_t - f_{t^\prime}}_{\Leb^{\infty}(\rho)} \leq L \norm{t-t^\prime}
    \ \wedge\ 
    f_t \geq 0
    \ \wedge\ 
    \norm{f_t}_{\Leb^{\infty}(\rho)} \leq C.
  \end{align*}
  Then, for i.i.d.~random variables $Y, (Y_i)_{i = 1}^N \sim \rho$, $N \geq 3$ and $\eta \in (0,1)$ we have
  \begin{align*}
    \mathbb{P}\left(
    \sup_{t\in \mathcal{T}}\abs{\frac{1}{N}\sum_{i = 1}^N f_t(Y_i) - \mathbb{E}(f_t(Y))} 
    > 
    \sqrt{\frac{\log N}{N}} 
    \left(C + 2L\right)
    \sqrt{\dim(\mathcal{T})\log\left(\frac{4\diam(\mathcal{T})}{\eta}\right)}
    \right)
    < 
    \eta.
  \end{align*}
\end{lemma}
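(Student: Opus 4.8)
The plan is the classical scheme behind Glivenko--Cantelli-type uniform deviation bounds: discretize the parameter set, control the empirical process at the net points by a union bound over Hoeffding estimates, and pass to the full supremum using the Lipschitz dependence on the parameter. Fix a resolution $\delta>0$ and choose a $\delta$-net $t_1,\dots,t_M\in\mathcal{T}$; since $\mathcal{T}$ is a compact subset of a $\dim(\mathcal{T})$-dimensional normed space of diameter $\diam(\mathcal{T})$, a standard volumetric/packing estimate gives $M\le\bigl(c_0\diam(\mathcal{T})/\delta\bigr)^{\dim(\mathcal{T})}$ with an explicit numerical constant $c_0$ (and $M=1$, a single-point net, suffices when $\delta\ge\diam(\mathcal{T})$). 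The three ingredients are then this covering bound, Hoeffding's inequality (\Cref{thm:Hoeffding}) at each fixed net point --- available because $0\le f_t\le C$ --- and the hypothesis $\norm{f_t-f_{t'}}_{\Leb^\infty(\rho)}\le L\norm{t-t'}$ used to transfer the bound from the net to the supremum.

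Concretely, for each $j$ set $X_i\assign f_{t_j}(Y_i)\in[0,C]$; applying \Cref{thm:Hoeffding} to $(X_i)_i$ and to $(-X_i)_i$ gives
\begin{align*}
  \mathbb{P}\Bigl(\bigl|\tfrac1N\textstyle\sum_{i=1}^N f_{t_j}(Y_i)-\mathbb{E}(f_{t_j}(Y))\bigr|>s\Bigr)\ \le\ 2\exp\!\bigl(-2Ns^2/C^2\bigr)\qquad\text{for all }s>0,
\end{align*}
and a union bound over $j=1,\dots,M$ replaces the right-hand side by $2M\exp(-2Ns^2/C^2)$, now bounding the maximum over the net. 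To pass to the supremum, given $t\in\mathcal{T}$ pick $j$ with $\norm{t-t_j}\le\delta$; since $|f_t(y)-f_{t_j}(y)|\le L\delta$ for $\rho$-a.e.\ $y$, both $\tfrac1N\sum_i f_t(Y_i)$ (almost surely, as $Y_i\sim\rho$) and $\mathbb{E}(f_t(Y))$ lie within $L\delta$ of their $t_j$-counterparts, so $\sup_{t\in\mathcal{T}}\bigl|\tfrac1N\sum_i f_t(Y_i)-\mathbb{E}(f_t(Y))\bigr|$ exceeds the maximum over the net by at most $2L\delta$. Hence $\mathbb{P}\bigl(\sup_t\bigl|\tfrac1N\sum_i f_t(Y_i)-\mathbb{E}(f_t(Y))\bigr|>s+2L\delta\bigr)\le 2M\exp(-2Ns^2/C^2)$. (The pointwise evaluations are made unambiguous by restricting $t$ to a fixed countable dense subset of $\mathcal{T}$ and extending via the inherited Lipschitz continuity on a $\rho$-full set --- the usual separability caveat.)

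It remains to choose $\delta$ and $s$. Writing the target threshold as $r(C+2L)$ with $r\assign\sqrt{\tfrac{\log N}{N}\dim(\mathcal{T})\log\bigl(4\diam(\mathcal{T})/\eta\bigr)}$, one takes $\delta$ of order $r$ (capped at $\diam(\mathcal{T})$, falling back to the one-point net otherwise) so that the transfer error satisfies $2L\delta\le 2Lr$, and then chooses the smallest $s$ making $2M\exp(-2Ns^2/C^2)\le\eta$, i.e.\ $s^2=\tfrac{C^2}{2N}\bigl(\log(2/\eta)+\dim(\mathcal{T})\log(c_0\diam(\mathcal{T})/\delta)\bigr)$; the claim follows once $s\le Cr$. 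I expect the one delicate point to be exactly this last inequality --- the constant bookkeeping needed to land on the stated threshold and not merely on one off by an absolute factor. After substituting the chosen $\delta$ one must check
\begin{align*}
  \log(2/\eta)+\dim(\mathcal{T})\log\!\bigl(c_0\diam(\mathcal{T})/\delta\bigr)\ \le\ 2\,\dim(\mathcal{T})\,\log N\,\log\!\bigl(4\diam(\mathcal{T})/\eta\bigr),
\end{align*}
using $\diam(\mathcal{T})\ge1$, $\eta\in(0,1)$ and $N\ge3$ (so that $\log N>1$ and $\log(4\diam(\mathcal{T})/\eta)>1$) to absorb, in turn, the additive $\log(2/\eta)$, the $\log\diam(\mathcal{T})$ term, the $\tfrac12\log N$ produced by $1/\delta\lesssim\sqrt N$, and the covering constant $\log c_0$; the numerical factor $4$ is pinned down by the constant in the covering bound, and in the regimes where $r$ is already large (e.g.\ $\diam(\mathcal{T})$ very large, or $N$ small) one simply falls back on the a priori bound $\sup_t\bigl|\tfrac1N\sum_i f_t(Y_i)-\mathbb{E}(f_t(Y))\bigr|\le C$, which makes the asserted inequality vacuous there. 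The other steps --- Hoeffding, the union bound, the Lipschitz transfer --- are routine.
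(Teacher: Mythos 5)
Your proof follows the same route as the paper's: a $\delta$-net of $\mathcal{T}$ with the volumetric covering bound from \cite{cucker2002mathematical}, Hoeffding plus a union bound at the net points, and the $\Leb^\infty$-Lipschitz hypothesis to transfer the deviation bound to the full supremum at the cost of an additive $2L\delta$, followed by the same kind of constant bookkeeping to land on the stated threshold. The only difference is the choice of net resolution (the paper fixes $r=1/\sqrt{N}$, you take $\delta$ of order the full target threshold), which is immaterial; your separability remark and the fallback to the trivial bound $\leq C$ in the degenerate regime are both sound.
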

\begin{proof}
  For $r \in (0, \diam(\mathcal{T})]$, let $\mathcal{S}_r$ be the center points a minimal $r$-cover of $\mathcal{T}$, i.e.~$\mathcal{S}_r \subset \mathcal{T}$ is a set with minimum cardinality such that for any $t\in \mathcal{T}$, there exists some $t_j \in \mathcal{S}_r$ with $\norm{t_j - t} \leq r$. By \cite[Prop.~5]{cucker2002mathematical} we have
    \begin{align*}
      \abs{\mathcal{S}_r} \leq \left(\frac{2\,\tn{diam}(\mathcal{T})}{r}\right)^{\dim(\mathcal{T})}.
    \end{align*}
    Note that $0 \leq f_t(Y) \leq C$ almost surely. 
    By \Cref{thm:Hoeffding} and the union bound we have for any $s > 0$
    \begin{align*}
      \mathbb{P}\left(
      \sup_{t_j\in \mathcal{S}_r}\abs{\frac{1}{N}\sum_{i = 1}^N f_{t_j}(Y_i) - \mathbb{E}(f_{t_j}(Y))} 
      > 
      s
      \right)  
      &\leq 
      2\abs{\mathcal{S}_r}\exp\left(-\frac{2Ns^2}{C^2}\right).
    \end{align*}
    Now for an arbitrary $t\in\mathcal{T}$, let $t_j\in\mathcal{S}_r$ such that $\norm{t_j - t}\leq r$. Then
    \begin{align*}
      &\abs{\frac{1}{N}\sum_{i = 1}^N f_t(Y_i) - \mathbb{E}(f_t(Y))} 
      \\&\leq
      \abs{\frac{1}{N}\sum_{i = 1}^N f_t(Y_i) - \frac{1}{N}\sum_{i = 1}^N f_{t_j}(Y_i)} 
      + \abs{\frac{1}{N}\sum_{i = 1}^N f_{t_j}(Y_i) - \mathbb{E}(f_{t_j}(Y))} 
      + \abs{\mathbb{E}(f_{t_j}(Y)) - \mathbb{E}(f_t(Y))}
      \\&\leq
      2Lr + \abs{\frac{1}{N}\sum_{i = 1}^N f_{t_j}(Y_i) - \mathbb{E}(f_{t_j}(Y))}.
    \end{align*}
    Therefore
    \begin{align*}
      \mathbb{P}\left(\sup_{t\in \mathcal{T}}\abs{\frac{1}{N}\sum_{i = 1}^N f_t(Y_i) - \mathbb{E}(f_t(Y))} > s + 2Lr\right)  
      &\leq 
      2
      \left(\frac{2\tn{diam}(\mathcal{T})}{r}\right)^{\dim(\mathcal{T})}
      \exp\left(-\frac{2Ns^2}{C^2}\right).
    \end{align*}
    Now set $r \assign \frac{1}{\sqrt{N}}$ and $s$ such that the right hand side is equal to $\eta$, that is
    \begin{align*}
      s 
      \assign 
      \frac{C}{\sqrt{2N}}\sqrt{\dim(\mathcal{T}) \log\left(\frac{2\diam(\mathcal{T})}{r}\right) + \log\left(\frac{2}{\eta}\right)}.
    \end{align*}
    Using basic bounds, rearrangement and $a + b \leq 2 a b$ for $a, b \geq 1$, we obtain 
    \begin{align*}
      s + 2 L r
      &=
      \frac{C}{\sqrt{2N}}\sqrt{\dim(\mathcal{T}) \log\left(2\diam(\mathcal{T})\sqrt{N}\right) + \log\left(\frac{2}{\eta}\right)} + \frac{2L}{\sqrt{N}}
      \\&\leq
      \sqrt{\frac{\log N}{N}}
      \left(C + 2L\right)
      \sqrt{\dim(\mathcal{T})\log\left(\frac{4\diam(\mathcal{T})}{\eta}\right)}.
    \end{align*}
    Putting everything together we arrive at the result.
\end{proof}

\begin{proof}[Proof of \Cref{prop:prob_var_bound}]
  Recall the kernel definitions \eqref{eq:def_kcontpidisc_op} and \eqref{eq:TEpsKernel}.
  The first inequality follows the same way as in the proof of \Cref{prop:prob_bias_bound}. 
  Define the function class
  \begin{align*}
    \mathcal{F} 
    = 
    \Big\{f_{(t,t')}(x,y) = \kernel{\mu\mu}{\veps}(t,x) \kernel{\nu\nu}{\veps}(t',y)
    \ \Big|\ 
    (t,t')\in \SpX^2
    \Big\}.
  \end{align*}
  Due to \eqref{prop:bounds_kernel} with \Cref{ass:min_ball_mass} we have that any $f_{(t,t')} \in \mathcal{F}$ is bounded on $\spt(\pi)$, specifically
  \begin{align*}
    \sup_{\substack{x \in \spt(\mu),\\y \in \spt(\nu)}}
    \kernel{\mu\mu}{\veps}(t,x) \kernel{\nu\nu}{\veps}(t',y) 
    \lesssim 
    \veps^{-\mathcal{D}_\mu - \mathcal{D}_\nu}.
  \end{align*}
  Furthermore, for any $(s,s'), (t,t') \in \SpX^2 \subset \R^{2d}$ and $(x,y) \in \spt(\pi)$ we have
  \begin{align*}
    \Big|f_{(t,t')}(x,y) - f_{(s,s')}(x,y)\Big| 
    &= 
    \Big|
    \kernel{\mu\mu}{\veps}(t,x)\kernel{\nu\nu}{\veps}(t',y) 
    - \kernel{\mu\mu}{\veps}(s,x) \kernel{\nu\nu}{\veps}(s',y)
    \Big| 
    \\&\leq 
    \kernel{\nu\nu}{\veps}(t', y)
    \Big|\kernel{\mu\mu}{\veps}(t, x) - \kernel{\mu\mu}{\veps}(s, x)\Big|
    + \kernel{\mu\mu}{\veps}(s, x)
    \Big|\kernel{\nu\nu}{\veps}(t', y) - \kernel{\nu\nu}{\veps}(s', y)\Big|
    \\&\leq 
    \norm{\kernel{\nu\nu}{\veps}(\cdot,y)}_\infty 
    \Lip(\kernel{\mu\mu}{\veps}(\cdot,x))
    \norm{s - t} 
    + \norm{\kernel{\mu\mu}{\veps}(\cdot,x)}_\infty
    \Lip(\kernel{\nu\nu}{\veps}(\cdot,y))
    \norm{s'-t'}
    \\&\lesssim
    \veps^{-\mathcal{D}_\mu - \mathcal{D}_\nu - 1}
    \norm{(t,t') - (s,s')}.
  \end{align*}
  This allows us to apply \Cref{lem:covering_number} (with $\mathcal{T} \assign \SpX^2$, $\rho \assign \pi$), finishing the proof.
\end{proof}

\subsection{Convergence to the unregularized transfer operator} \label{subsec:cv_Teps2T}
So far we have discussed the discrepancy between $T_N^\veps$ (or its extensions $T_N^{A,\veps}$ and $T_N^{B,\veps}$) and $T^\veps$. In this section we briefly address the relation between $T$ and $T^\veps$ as $\veps \to 0$.
When $T$ is not compact, then we expect $T^\veps$ to diverge in Hilbert--Schmidt norm, as $\veps \to 0$. The following assumption and proposition give an exemplary setting where we find $T^\veps \to T$ as $\veps \to 0$ in Hilbert--Schmidt norm.

\begin{assumption} \label{ass:t_dens_Pi}
  The plan $\pi$ is absolutely continuous with respect to the product of its marginals $\mu$ and $\nu$, i.e. $\pi \ll \mu \otimes \nu$, and the density $t \assign \RadNik{\pi}{\mu \otimes \nu}$ satisfies the Hölder-type continuity property
  \begin{equation} \label{eq:t_li_c}
    \abs{t(x',y')-t(x,y)}\leq L(c(x,x')+c(y,y'))^{l}
    \quad \tn{for all} \quad x,x',y,y' \in \SpX,
  \end{equation}
  for suitable constants $L,l>0$. 
\end{assumption}
Of course, for $c(x,x')=d(x,x')^2$ this reduces to standard Hölder continuity on $\SpX \times \SpX$.
Under this assumption, the discrepancy between $T^\veps$ and $T$ vanishes with an explicit rate in $\veps$.
\begin{theorem} \label{prop:Teps2T}
  Given Assumptions \ref{ass:min_ball_mass} and \ref{ass:t_dens_Pi} for sufficiently small $\veps>0$ and any $x\in\spt(\mu)$, $y\in\spt(\nu)$ we have
  \begin{align*}
    \abs{t^\veps(x,y)-t(x,y)}\lesssim(\veps\log(1/\veps))^{l}.
  \end{align*}
  The same upper bound holds for $\norm{T^\veps-T}_{\HS}$. The multiplicative constant depends only on $t$, the cost $c$, and the measures $\mu$ and $\nu$.
\end{theorem}

\begin{proof}
  Like in the previous proofs, since $T^\veps$ and $T$ are kernel operators, we only need to provide the upper bound on $\abs{t^\veps(x,y)-t(x,y)}$ for $x,y$ on the right supports. For any $(x,y)\in\spt(\mu)\times\spt(\nu)$ and any radius $\eta>0$ we have
  \begin{align*}
    \abs{t^\veps(x,y)-t(x,y)} 
    &\leq 
    \int_{\SpX^2}
    \abs{t(x',y')-t(x,y)}
    \kernel{\mu\mu}{\veps}(x,x')
    \kernel{\nu\nu}{\veps}(y,y')
    \,\diff\mu(x')\diff\nu(y')
    \\&\leq 
    L\int_{\substack{c(x,x')\leq\eta\\c(y,y')\leq\eta}}
    (c(x,x')+c(y,y'))^{l}
    \kernel{\mu\mu}{\veps}(x,x')
    \kernel{\nu\nu}{\veps}(y,y')
    \,\diff\mu(x')\diff\nu(y')
    \\&\quad 
    +2\norm{t}_\infty
    \int_{c(x,x')>\eta}
    \kernel{\mu\mu}{\veps}(x,x')
    \,\diff\mu(x')+2\norm{t}_\infty
    \int_{c(y,y')>\eta}
    \kernel{\nu\nu}{\veps}(y,y')
    \,\diff\nu(y')
    \\&\lesssim 
    \eta^{l}
    +\norm{t}_\infty \frac{e^{-\frac{\eta}{\veps}}}{\veps^{\mathcal{D}_\mu}}+\norm{t}_\infty\frac{e^{-\frac{\eta}{\veps}}}{\veps^{\mathcal{D}_\nu}}.
  \end{align*}
  Here, to obtain the second inequality, we split the integral on $\SpX^2$ according to $\eta$-level sets of $c$ and then used \eqref{eq:t_li_c} on the first term. Then for the last inequality we argue as follows.
  In the first term, replace $c$ by its upper bound $\eta$ and use that integral over the kernels is bounded by $1$, e.g.~due to $\int_{\SpX} \kernel{\mu\mu}{\veps}(x,x')\,\diff \mu(x)=1$.
  In the second and third term, use the bound of \Cref{rem:better_bound_selfOT} to control the remaining kernels, assuming that $\veps$ is sufficiently small for the bounds of \Cref{ass:min_ball_mass} to apply, and then use that $\mu$ and $\nu$ have mass $1$.
  Assume then that $\veps \leq \exp(-1)$ and set $\eta \assign ((\max\{\mathcal{D}_\mu,\mathcal{D}_\nu\}+l)\veps\log(1/\veps))$ each of the three terms in the final expression are bounded from above (up to a multiplicative constant) by $(\veps\log(1/\veps))^{l}$.
\end{proof}

\subsection[The stationary case mu=nu]{The stationary case $\mu=\nu$} \label{subsec:stationary}
In the case $\mu=\nu$ the operators $T$ and $T^\veps$ can be interpreted as transition operators for a time-homogeneous Markov chain and they become endomorphisms that can be analyzed by eigendecomposition as an alternative to singular value decomposition. 
However, even in this setting the two empirical marginal measures are usually different, i.e.~$\mu_N \neq \nu_N$, and thus neither $T_N$ nor $T_N^\veps$, as introduced in \eqref{eq:TEpsN}, will be endomorphisms.
In this case, we can adjust the definition of $T_N^\veps$ to turn it into an endomorphism on $\Leb^2(\mu_N)$, that can be analyzed by finite-dimensional eigendecomposition.
We achieve this by adjusting the second blur operator to transfer from $\Leb^2(\nu_N)$ back to $\Leb^2(\mu_N)$ (as originally proposed in \cite{EntropicTransfer22}). The following definition collects all adaptations.

\begin{definition}[Operator variants for stationary case] \label{def:op_stationary}
  The regularized empirical transfer operator is defined as
  \begin{align*}
    T_N^\veps & \assign G_{\nu_N\mu_N}^\veps \circ T_N \circ G_{\mu_N\mu_N}^\veps,
  \end{align*}
  which is associated with the integral kernel $t_N^\veps \assign (\kernel{\mu_N\mu_N}{\veps}:\pi_N:\kernel{\nu_N\mu_N}{\veps})$. The extension is set to be
  \begin{align*}
    T_N^{A,\veps} & \assign (T^{\mu}_N)^* \circ T_N^\veps \circ T^{\mu}_N
  \end{align*}
  where $T^\mu_N$ is the operator induced by the optimal unregularized plan $\gamma^\mu_N$ between $\mu$ and $\mu_N$. $T_N^{A,\veps}$ has the integral kernel
  $t_N^{A,\veps}(x,y) \assign \int_{\SpX^2} t_N^\veps(x',y')\,\diff \gamma^\mu_N(x'|x)\,\diff \gamma^\mu_N(y'|y)$
  where $(\gamma^\mu_N(\cdot|x))_x$ denotes the disintegration of $\gamma^\mu_N$ with respect to its $\mu$ marginal. The auxillary operator $T^{B,\veps}_N$ is defined as
  \begin{equation*}
    T^{B,\veps}_N: \Leb^2(\mu) \to \Leb^2(\mu),
    \quad
    u\mapsto \int_{\SpX^2}u(x)t^\veps_N(x,\cdot) \,\diff\mu(x)
  \end{equation*}
  where the difference to \eqref{eq:def_mixed_op} is the definition of $t^\veps_N$.
  And the auxillary operator $T^{C,\veps}_N$ is defined as
  \begin{equation*}
    T^{C,\veps}_N: \Leb^2(\mu) \to \Leb^2(\mu),
    \quad
    u\mapsto \int_{\SpX^2}u(x)t^{C,\veps}_N(x,\cdot)\,\diff\mu(x)
    \quad \tn{where} \quad
    t^{C,\veps}_N \assign (\kernel{\mu\mu}{\veps}:\pi_N:\kernel{\mu\mu}{\veps}).
  \end{equation*}
\end{definition}

In the case where $\mu=\nu$, replacing the definitions of Section \ref{subsec:main_defs} by those of Definition \ref{def:op_stationary} one finds that the convergence results of Section \ref{subsec:quant_cvg} still hold, if one replaces references to $\nu$ and $\nu_N$ by $\mu$ and $\mu_N$. The corresponding adaptation of the proofs is straight-forward.

\subsection{Spectral convergence} \label{subsec:SpectralConvergence}

Section \ref{subsec:quant_cvg} establishes convergence in Hilbert--Schmidt norm of $T_N^{A,\veps}$ and $T_N^{B,\veps}$ to $T^\veps$ as $N \to \infty$. For sufficiently regular $T$, by combining Sections \ref{subsec:quant_cvg} and \ref{subsec:cv_Teps2T} we find $T_N^{A,\veps},T_N^{B,\veps} \to T$ for suitable joint limits $N \to \infty$, $\veps \to 0$.
This convergence implies a notion of spectral convergence for eigen- and singular values and functions.
In addition, if the extension operators $T_N^{\mu}$ and $T_N^{\nu}$ (see Definition \ref{def:T_extension}) are chosen suitably, then the non-trivial parts of the spectra of $T_N^{A,\veps}$ and $T_N^{\veps}$ are identical and the related eigen- or singular functions are in one-to-one correspondence (\Cref{prop:extension_eigenpairs,prop:extension_svd}).
In this section we briefly recall some results for the stationary setting (Section \ref{subsec:stationary}) as discussed in \cite[Sections 4.8 and 4.9]{EntropicTransfer22} and discuss corresponding results for the non-stationary setting and singular value decomposition.

\begin{lemma}[\protect{\cite[Lemma 2.2]{bramble1973rate} as stated in \cite[Lemma 1]{EntropicTransfer22}}] \label{prop:cv_eigenvalues}
  In the stationary setting, assume that $T_N^{A,\veps} \to T^\veps$ as $N \to \infty$ in Hilbert--Schmidt norm. Let $\lambda_\veps$ be a nonzero eigenvalue of $T^\veps$ with algebraic multiplicity $m$ and $\Gamma$ be a disk centered at $\lambda_\veps$ containing no other point of the spectrum of $T^\veps$. Then for $N$ large enough, there are exactly $m$ eigenvalues $(\lambda_{N,j}^{A,\veps})_{j=1\dots m}$ (counted with multiplicity) for $T_N^{A,\veps}$ lying inside $\Gamma$.
\end{lemma}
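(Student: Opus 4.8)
The plan is to reproduce the classical Riesz spectral projection argument of Bramble--Osborn, which turns Hilbert--Schmidt (hence operator-norm) convergence into convergence of spectral projectors, and then into equality of their ranks. Since $T^\veps$ and every $T_N^{A,\veps}$ are Hilbert--Schmidt, hence compact, their nonzero spectra consist of isolated eigenvalues of finite algebraic multiplicity, so the disk $\Gamma$, the notion of algebraic multiplicity, and the Riesz projector
\[
  P \assign \frac{1}{2\pi i}\oint_{\partial\Gamma}(z-T^\veps)^{-1}\,\diff z
\]
all make sense. By the choice of $\Gamma$, the rank of $P$ equals the sum of the algebraic multiplicities of the eigenvalues of $T^\veps$ enclosed by $\partial\Gamma$, i.e.\ exactly $m$.

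The first step is to establish uniform resolvent convergence on the contour $\partial\Gamma$. Since $\partial\Gamma$ is compact and contained in the resolvent set of $T^\veps$, the map $z\mapsto(z-T^\veps)^{-1}$ is continuous, hence bounded on $\partial\Gamma$ by some $M<\infty$. Writing $z-T_N^{A,\veps}=(z-T^\veps)\bigl(\id-(T_N^{A,\veps}-T^\veps)(z-T^\veps)^{-1}\bigr)$ and using $\opNorm{T_N^{A,\veps}-T^\veps}\leq\norm{T_N^{A,\veps}-T^\veps}_{\HS}\to 0$, for $N$ large enough we have $\sup_{z\in\partial\Gamma}\opNorm{(T_N^{A,\veps}-T^\veps)(z-T^\veps)^{-1}}<\tfrac12$, so a Neumann series shows that $z-T_N^{A,\veps}$ is invertible for every $z\in\partial\Gamma$ with $\sup_{z\in\partial\Gamma}\opNorm{(z-T_N^{A,\veps})^{-1}}\leq 2M$, and moreover $\sup_{z\in\partial\Gamma}\opNorm{(z-T_N^{A,\veps})^{-1}-(z-T^\veps)^{-1}}\to 0$. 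In particular $\partial\Gamma$ avoids $\sigma(T_N^{A,\veps})$ for large $N$, so the Riesz projector $P_N\assign\frac{1}{2\pi i}\oint_{\partial\Gamma}(z-T_N^{A,\veps})^{-1}\,\diff z$ is well defined, of finite rank, and $\mathrm{rank}(P_N)$ equals the number of eigenvalues of $T_N^{A,\veps}$ inside $\Gamma$ counted with algebraic multiplicity.

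Integrating the uniform resolvent bound over the finite-length contour then gives $\opNorm{P_N-P}\to 0$ as $N\to\infty$. Finally I would invoke the elementary fact that two bounded projections $P,Q$ with $\opNorm{P-Q}<1$ have equal rank --- for instance, the restriction of $P$ to $\mathrm{ran}\,Q$ is an isomorphism onto $\mathrm{ran}\,P$ --- so that for $N$ large enough $\mathrm{rank}(P_N)=\mathrm{rank}(P)=m$, which is precisely the assertion. None of the steps is genuinely difficult; the only point requiring care is the uniform-in-$z$ control of the perturbed resolvent on $\partial\Gamma$, and this is exactly where compactness of $\partial\Gamma$ together with \emph{operator-norm} (not merely strong) convergence of $T_N^{A,\veps}$ to $T^\veps$ is used --- the hypothesis being phrased in Hilbert--Schmidt norm conveniently supplies this.
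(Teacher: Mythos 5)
Your argument is correct and complete: it is the classical Riesz spectral-projection proof (uniform resolvent bounds on the compact contour $\partial\Gamma$ via a Neumann series, convergence of the projectors $P_N \to P$ in operator norm, and equality of ranks of projections at distance less than one), which is precisely the proof underlying the cited result of Bramble--Osborn. The paper itself does not prove this lemma but imports it verbatim from \cite[Lemma 2.2]{bramble1973rate} via \cite[Lemma 1]{EntropicTransfer22}, so your write-up simply supplies the standard argument that the citation points to; the only point worth making explicit is that the boundary circle $\partial\Gamma$ must lie in the resolvent set of $T^\veps$, which follows from the hypothesis on $\Gamma$ since the nonzero spectrum of a compact operator is discrete.
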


\begin{theorem}[\protect{\cite[Theorem 5]{osborn1975spectral} as stated in \cite[Theorem 2]{EntropicTransfer22}}] \label{prop:spectral_vec}
  Let $(\lambda_{N}^{A,\veps})_N$ be a sequence of eigenvalues of $T^{A,\veps}_N$ that converges to an eigenvalue $\lambda^{\veps}$ of $T^\veps$ as $N \to \infty$.
  For each $N$, let $u_N^{A,\veps}$ be a corresponding unit eigenvector of $T^{A,\veps}_N$ at $\lambda_{N}^{A,\veps}$. Then there is a sequence of generalized eigenvectors $u_{N}^{\veps}$ of $T^\veps$ at $\lambda^\veps$ such that
  \begin{equation*}
    \norm{u_N^{A,\veps}-u_N^{\veps}}_{\Leb^2(\mu)}\lesssim   \opNorm{T_N^{A,\veps}-T^{\veps}}.
  \end{equation*}
  The multiplicative constant may depend on $\veps$ but does not depend on $N$.
\end{theorem}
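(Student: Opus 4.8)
This statement is the spectral-perturbation theorem of Osborn \cite{osborn1975spectral}, and the plan is to recall how it specializes to our operators via Riesz spectral projections. First I would record that all hypotheses are in place: $T^\veps$ is compact (as noted after \Cref{def:entr_trans_basic}), each $T^{A,\veps}_N$ is Hilbert--Schmidt (its integration kernel $t^{A,\veps}_N$ is bounded, hence in $\Leb^2(\mu\otimes\nu)$) and therefore compact, and by the bounds of Section~\ref{subsec:quant_cvg} together with $\opNorm{\cdot}\le\norm{\cdot}_{\HS}$ one has $\opNorm{T^{A,\veps}_N-T^\veps}\to 0$ as $N\to\infty$ (for fixed $\veps$, almost surely or with high probability). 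Since $T^\veps$ is compact, a nonzero eigenvalue $\lambda^\veps$ is isolated with finite algebraic multiplicity $m$, its generalized eigenspace $E^\veps\assign\ker((\lambda^\veps-T^\veps)^m)$ is finite-dimensional, and it equals the range of the Riesz projection $P^\veps\assign\frac{1}{2\pi i}\oint_\Gamma(z-T^\veps)^{-1}\,\diff z$, where $\Gamma$ is a small circle around $\lambda^\veps$ separating it from the rest of the spectrum of $T^\veps$.

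The core estimate to establish is $\opNorm{P^{A,\veps}_N-P^\veps}\lesssim\opNorm{T^{A,\veps}_N-T^\veps}$, where $P^{A,\veps}_N\assign\frac{1}{2\pi i}\oint_\Gamma(z-T^{A,\veps}_N)^{-1}\,\diff z$. I would obtain this from a Neumann-series argument on $\Gamma$: the resolvent $z\mapsto(z-T^\veps)^{-1}$ is continuous, hence bounded by some $M$ on the compact set $\Gamma$, so once $\opNorm{T^{A,\veps}_N-T^\veps}\le 1/(2M)$ the operators $z-T^{A,\veps}_N$ are invertible for every $z\in\Gamma$ with $\opNorm{(z-T^{A,\veps}_N)^{-1}}\le 2M$ and $\opNorm{(z-T^{A,\veps}_N)^{-1}-(z-T^\veps)^{-1}}\le 2M^2\,\opNorm{T^{A,\veps}_N-T^\veps}$; integrating over $\Gamma$ then yields the projection bound with a constant that depends on $\veps$ through $M$ and the length of $\Gamma$ but is independent of $N$. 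In particular $P^{A,\veps}_N$ has the same rank $m$ as $P^\veps$ for $N$ large, which is also the content of \Cref{prop:cv_eigenvalues}.

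To finish, given a unit eigenvector $u^{A,\veps}_N$ of $T^{A,\veps}_N$ at $\lambda^{A,\veps}_N$, note that for $N$ large $\lambda^{A,\veps}_N$ lies inside $\Gamma$, so $P^{A,\veps}_N u^{A,\veps}_N=u^{A,\veps}_N$. I would then set $u^\veps_N\assign P^\veps u^{A,\veps}_N\in E^\veps$, a generalized eigenvector of $T^\veps$ at $\lambda^\veps$, and compute
\[
  \norm{u^{A,\veps}_N-u^\veps_N}_{\Leb^2(\mu)}=\norm{(P^{A,\veps}_N-P^\veps)u^{A,\veps}_N}_{\Leb^2(\mu)}\le\opNorm{P^{A,\veps}_N-P^\veps}\lesssim\opNorm{T^{A,\veps}_N-T^\veps},
\]
which is the claimed bound.

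The main obstacle — and the reason Hilbert--Schmidt (equivalently, operator-norm) convergence rather than merely strong or spectral convergence is needed — is the uniform-in-$N$ invertibility of $z-T^{A,\veps}_N$ on $\Gamma$ together with the accompanying uniform resolvent bound underpinning the Neumann-series step. A secondary point worth flagging is that $T^\veps$ is not self-adjoint in general, so $E^\veps$ genuinely consists of generalized eigenvectors and the Riesz projection (not an orthogonal projection onto an eigenspace) is the correct object; for the sharp dependence of the constant on the spectral gap and the ascent of $\lambda^\veps$ we refer to \cite[Theorem 5]{osborn1975spectral}.
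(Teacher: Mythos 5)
Your proposal is correct. Note, however, that the paper does not prove this statement at all: it is imported verbatim as \cite[Theorem 5]{osborn1975spectral} (as restated in \cite{EntropicTransfer22}), so there is no in-paper argument to compare against. What you have written is essentially the standard proof underlying Osborn's theorem, specialized to the Hilbert-space, norm-convergent setting: a uniform resolvent bound on the contour $\Gamma$ via a Neumann series, the resolvent identity to get $\opNorm{P^{A,\veps}_N-P^\veps}\lesssim\opNorm{T^{A,\veps}_N-T^\veps}$, and the choice $u_N^\veps\assign P^\veps u_N^{A,\veps}$. Osborn's actual statement is sharper in that it tracks the ascent of $\lambda^\veps-T^\veps$ and gives corresponding rates for the eigenvalues as well, but for the bound quoted here your argument suffices and the constant ($\tfrac{1}{2\pi}\mathrm{length}(\Gamma)\cdot 2M^2$) visibly depends only on $\veps$ through $\Gamma$ and $M$, not on $N$, as required.

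Two small points you should make explicit. First, the argument needs $\lambda^\veps\neq 0$ so that the eigenvalue is isolated with finite algebraic multiplicity and $\Gamma$ can be chosen; this is implicit in the paper (cf.\ \Cref{prop:cv_eigenvalues}) but worth stating. Second, your estimate only holds once $\opNorm{T^{A,\veps}_N-T^\veps}\le 1/(2M)$ and $\lambda_N^{A,\veps}$ has entered $\Gamma$; for the finitely many remaining $N$ one picks any unit generalized eigenvector as $u_N^\veps$ and absorbs the resulting bounded discrepancy into the constant. Both are routine, but the statement as phrased asserts the inequality for every $N$.
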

Both results also hold when replacing $T_N^{A,\veps}$ by $T_N^{B,\veps}$ as defined in the stationary setting.
In the setting of Section \ref{subsec:cv_Teps2T} one may also replace $T^\veps$ by $T$ (with multiplicative constants then being independent of $\veps$).
Note that in \Cref{prop:spectral_vec} the `limiting generalized eigenvector' $u_N^\veps$ also depends on $N$. This accounts for the case when the eigenvalue $\lambda^\veps$ has geometric multiplicity greater one, and therefore the approximating sequence $u_N^{A,\veps}$ may be oscillating and non-convergent. Alternatively, it would be possible to formulate the above convergence result in terms of convergence of the orthogonal projections on each generalized eigenspace, using \cite[Theorem 1]{osborn1975spectral} and the relation between the gap between finite-dimensional subspaces and the orthogonal projectors to them \cite{kato1958perturbation}.

Next, we recall a corresponding convergence result for the singular value decomposition.
\begin{theorem}[\protect{\cite[Theorem 4.6]{crane2020singular}}]
  In the non-stationary setting, assume that $T_N^{A,\veps} \to T^\veps$ as $N \to \infty$ in Hilbert--Schmidt norm. Let $(\phi_k)_k \subset \Leb^2(\mu)$, $(\psi_k)_k \subset \Leb^2(\nu)$ be orthonormal sequences and $(\sigma_k)_k \subset \R$ a positive decreasing sequence s.t.~the singular value expansion for $T^\veps$ is $
  \sum_{k=1}^{\infty} \sigma_k\,\psi_k \otimes \phi_k^{\ast}$. Similarly, let $(\phi_{k,N})_k \subset \Leb^2(\mu)$, $(\psi_{k,N})_k \subset \Leb^2(\nu)$ be orthonormal sequences, and $(\sigma_{k,N})_k \subset \R$ a positive decreasing sequence s.t.~$T_N^{A,\veps} = \sum_{k=1}^{\infty} \sigma_{k,N}\,\psi_{k,N} \otimes \phi_{k,N}^{\ast}$. Then
  \begin{align*}
    \abs{\sigma_{k,N}-\sigma_k}
    \leq
    \opNorm{T_N^{A,\veps}-T^{\veps}} 
    \quad 
    \tn{for any $N$, $k$.}
  \end{align*}
\end{theorem}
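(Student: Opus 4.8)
The plan is to obtain the estimate as an instance of the classical perturbation stability of singular values of compact operators (a Weyl-type inequality), so that the Hilbert--Schmidt convergence hypothesis is only used to deduce the qualitative consequence $\sigma_{k,N}\to\sigma_k$. First I would note that $T^\veps$ is Hilbert--Schmidt by \Cref{prop:entr_trans}, and that $T_N^{A,\veps}$ is Hilbert--Schmidt as well, being built from the kernel operator $T_N^\veps$ (cf.~\eqref{eq:tNAeps}); in particular both are compact and admit the singular value expansions asserted in the statement. The core tool is the Courant--Fischer min--max characterization of singular values: for a compact operator $S:\Leb^2(\mu)\to\Leb^2(\nu)$ with singular values $\sigma_1(S)\geq\sigma_2(S)\geq\dots\geq 0$ (listed with multiplicity), applying the min--max principle to the self-adjoint, compact, non-negative operator $S^*S$ on $\Leb^2(\mu)$ and using $\norm{Su}_{\Leb^2(\nu)}^2=\langle S^*Su,u\rangle_{\Leb^2(\mu)}$ yields
\begin{equation*}
  \sigma_k(S)=\min_{\substack{V\subseteq\Leb^2(\mu)\\\dim V=k-1}}\ \max_{\substack{u\in V^\perp\\\norm{u}_{\Leb^2(\mu)}=1}}\norm{Su}_{\Leb^2(\nu)}.
\end{equation*}

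Next I would apply this with $S=T_N^{A,\veps}$ and $S=T^\veps$ in parallel. For any subspace $V\subseteq\Leb^2(\mu)$ with $\dim V=k-1$ and any unit vector $u\in V^\perp$, the triangle inequality gives
\begin{equation*}
  \norm{T_N^{A,\veps}u}_{\Leb^2(\nu)}\leq\norm{T^\veps u}_{\Leb^2(\nu)}+\norm{(T_N^{A,\veps}-T^\veps)u}_{\Leb^2(\nu)}\leq\norm{T^\veps u}_{\Leb^2(\nu)}+\opNorm{T_N^{A,\veps}-T^\veps}.
\end{equation*}
Taking the maximum over such $u$ and then the minimum over such $V$ gives $\sigma_{k,N}\leq\sigma_k+\opNorm{T_N^{A,\veps}-T^\veps}$; exchanging the roles of the two operators yields the reverse inequality, and together they give $\abs{\sigma_{k,N}-\sigma_k}\leq\opNorm{T_N^{A,\veps}-T^\veps}$ for every $k$ and $N$, which is the claimed bound. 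Finally, $\opNorm{T_N^{A,\veps}-T^\veps}\leq\norm{T_N^{A,\veps}-T^\veps}_{\HS}\to 0$ under the hypothesis, so $\sigma_{k,N}\to\sigma_k$ for each fixed $k$.

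I do not expect a serious obstacle here: this is essentially a textbook estimate, and the only point requiring a little care — absent in the self-adjoint/stationary case treated by \Cref{prop:cv_eigenvalues} and \Cref{prop:spectral_vec} — is that $T^\veps$ and $T_N^{A,\veps}$ act between the two distinct Hilbert spaces $\Leb^2(\mu)$ and $\Leb^2(\nu)$. The min--max argument above is insensitive to this, since it only involves the domain space through the orthogonality constraint and the scalar quantities $\norm{Su}_{\Leb^2(\nu)}$; a minor bookkeeping remark is that if $T^\veps$ has finite rank one simply sets $\sigma_k=0$ beyond the rank and the inequality persists. Alternatively, one may invoke \cite[Theorem~4.6]{crane2020singular} directly, of which the above is the short self-contained argument.
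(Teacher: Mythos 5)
Your argument is correct. Note, however, that the paper itself gives no proof of this statement: it is imported wholesale as \cite[Theorem 4.6]{crane2020singular}, so there is nothing internal to compare against. What you have written is the standard self-contained derivation of Weyl's perturbation inequality for singular values: the Courant--Fischer min--max characterization $\sigma_k(S)=\min_{\dim V=k-1}\max_{u\in V^\perp,\,\norm{u}=1}\norm{Su}$ (obtained from the spectral min--max for the compact, self-adjoint, non-negative operator $S^*S$), followed by the triangle inequality and symmetrization, correctly yields $\abs{\sigma_{k,N}-\sigma_k}\leq\opNorm{T_N^{A,\veps}-T^\veps}$ for all $k$ and $N$; the Hilbert--Schmidt hypothesis then only enters through $\opNorm{\cdot}\leq\norm{\cdot}_{\HS}\to 0$. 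You are also right that the two-Hilbert-space setting ($\Leb^2(\mu)\to\Leb^2(\nu)$) is harmless here, and your convention $\sigma_k=0$ beyond the rank handles the finite-rank case; one small reading point is that ``positive decreasing sequence'' in the statement should be understood as non-increasing with multiplicities counted, which is the reading your min--max indexing requires. The trade-off between the two routes is simply citation economy versus self-containedness: the paper's choice keeps the section short and defers to a reference that also supplies the companion results on singular subspaces (\cite[Corollary 4.9]{crane2020singular}, used in the following proposition), whereas your argument makes the singular-value bound independent of that reference at essentially no extra cost.
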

There are also corresponding results for the convergence of the singular functions. Consider the span of singular functions associated with a given singular value $\sigma_k$, i.e.~let $E_k\assign\{\psi\in \Leb^2(\mu) \,:\,(T^\veps)^*T^\veps\psi=\sigma_k^2\psi\}$ and $F_k\assign\{\phi\in \Leb^2(\nu) \,:\,T^\veps(T^\veps)^*\phi=\sigma_k^2\phi\}$. Similarly, let $E_{k,N}$, $F_{k,N}$ be the respective subspaces of all the corresponding singular values $\sigma_{l,N}$ of $T_N^{A,\veps}$ that converge to $\sigma_k$ (i.e.~$l$ may be non-unique and different from $k$ when singular values repeat, but $\sigma_l=\sigma_k$ in the limit).
\begin{proposition}
  For any $k$, let $(\phi_{k,N}^{A,\veps})_N$ be a sequence of unit vectors in $(E_{k,N})_N$. Then there exists a sequence of vectors $(\phi_{k,N}^\veps)_N$ in $E_k$ such that for any $N$,
  \begin{equation} \label{eq:cv_sing_vecs}
    \norm{\phi_{k,N}^{A,\veps}-\phi_{k,N}^\veps}_{\Leb^2(\mu)}\lesssim\opNorm{T_N^{A,\veps}-T^{\veps}}
  \end{equation}
  with a multiplicative constant that does not depend on $N$, but could depend on $\veps$ or $k$.
  The symmetrical result between vectors of $F_{k,N}$ and $F_{k}$ also holds.
\end{proposition}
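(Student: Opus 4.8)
The idea is to deduce the statement from standard spectral perturbation theory applied to the self-adjoint, positive, compact operators $A \assign (T^\veps)^*T^\veps$ and $A_N \assign (T_N^{A,\veps})^*T_N^{A,\veps}$ on $\Leb^2(\mu)$. By definition, $\sigma_k^2$ is an eigenvalue of $A$ whose eigenspace is exactly $E_k$, whereas $E_{k,N}$ is precisely the direct sum of the eigenspaces of $A_N$ for those eigenvalues $\sigma_{l,N}^2$ that converge to $\sigma_k^2$. Both $T^\veps$ (by \Cref{prop:entr_trans}) and $T_N^{A,\veps}$ (whose integral kernel $t_N^{A,\veps}$ from \eqref{eq:tNAeps} is bounded, hence in $\Leb^2(\mu\otimes\nu)$) are Hilbert--Schmidt, so $A$ and $A_N$ are Hilbert--Schmidt, their nonzero eigenvalues have finite-dimensional eigenspaces, and the perturbation theory of compact self-adjoint operators applies.

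First I would control $\opNorm{A_N-A}$ in terms of $\opNorm{T_N^{A,\veps}-T^\veps}$. Writing $A_N-A = (T_N^{A,\veps})^*(T_N^{A,\veps}-T^\veps) + (T_N^{A,\veps}-T^\veps)^*T^\veps$ and using that $T^\veps$ and $T_N^{A,\veps}$ are compositions of operators each induced by a transport plan, hence of operator norm at most $1$ on $\Leb^2$ by \Cref{prop:inducedTO}, the triangle inequality gives $\opNorm{A_N-A}\leq 2\,\opNorm{T_N^{A,\veps}-T^\veps}$. Together with $\opNorm{\cdot}\leq\norm{\cdot}_{\HS}$ and the convergence from Section~\ref{subsec:quant_cvg}, this also shows $A_N\to A$ in operator norm.

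Next I would fix a small circle $\Gamma$ in the complex plane enclosing $\sigma_k^2$ and no other point of the spectrum of $A$, and introduce the associated Riesz spectral projections $P_k$ of $A$ and $P_{k,N}$ of $A_N$ onto the part of the spectrum inside $\Gamma$. Since $A$ is self-adjoint, $P_k$ is the orthogonal projection onto $E_k$; once $\opNorm{A_N-A}$ is smaller than the distance from $\Gamma$ to the spectrum of $A$, the projection $P_{k,N}$ is well defined, equals the orthogonal projection onto the corresponding spectral cluster subspace, and for such $N$ that subspace is exactly $E_{k,N}$. The resolvent estimate underlying \Cref{prop:spectral_vec}, taken in its projection form (see \cite[Theorem~1]{osborn1975spectral} together with the gap identity of \cite{kato1958perturbation}), then yields $\opNorm{P_{k,N}-P_k}\leq C_{\veps,k}\,\opNorm{A_N-A}$ with a constant $C_{\veps,k}$ depending only on the spectral gap of $A$ around $\sigma_k^2$, i.e.~on $\veps$ and $k$, but not on $N$.

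Finally, given a unit vector $\phi_{k,N}^{A,\veps}\in E_{k,N}=\mathrm{ran}\,P_{k,N}$, I would set $\phi_{k,N}^\veps\assign P_k\phi_{k,N}^{A,\veps}\in E_k$; then, using $P_{k,N}\phi_{k,N}^{A,\veps}=\phi_{k,N}^{A,\veps}$ and $(I-P_k)P_k=0$,
\[
  \norm{\phi_{k,N}^{A,\veps}-\phi_{k,N}^\veps}_{\Leb^2(\mu)}
  = \norm{(I-P_k)(P_{k,N}-P_k)\phi_{k,N}^{A,\veps}}_{\Leb^2(\mu)}
  \leq \opNorm{P_{k,N}-P_k}
  \leq C_{\veps,k}\,\opNorm{A_N-A}
  \leq 2C_{\veps,k}\,\opNorm{T_N^{A,\veps}-T^\veps},
\]
which is the claimed bound for $N$ in the regime where the perturbation argument applies; for the finitely many remaining $N$ (where $\opNorm{A_N-A}$, and hence $\opNorm{T_N^{A,\veps}-T^\veps}$, is bounded below) one takes $\phi_{k,N}^\veps=0$ and enlarges the constant. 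The symmetrical statement for $F_{k,N}$ and $F_k$ follows identically with $A$, $A_N$ replaced by $T^\veps(T^\veps)^*$ and $T_N^{A,\veps}(T_N^{A,\veps})^*$ on $\Leb^2(\nu)$. The step I expect to be the main obstacle is handling the cluster structure correctly: the $\sigma_{l,N}$ accumulating at $\sigma_k$ need be neither simple nor distinct, so one cannot reduce to a single eigenvector of $A_N$ and must instead work with the total spectral projection onto $E_{k,N}$, which is exactly why the projection form of the perturbation theorem is invoked.
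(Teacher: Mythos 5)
Your proposal is correct, but it is a genuinely different (and much more self-contained) route than the one the paper takes: the paper's entire proof is a one-line citation of \cite[Corollary 4.9]{crane2020singular}, which directly bounds the gap between the singular subspaces $E_{k,N}$ and $E_k$ by $\opNorm{T_N^{A,\veps}-T^\veps}$, whereas you rederive that statement from scratch by passing to the self-adjoint compact operators $(T^\veps)^*T^\veps$ and $(T_N^{A,\veps})^*(T_N^{A,\veps})$, bounding $\opNorm{A_N-A}\leq 2\opNorm{T_N^{A,\veps}-T^\veps}$ via the unit operator norms of the transfer operators, and then invoking the Riesz-projection perturbation estimate in the spirit of \cite{osborn1975spectral,kato1958perturbation}. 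Your argument is sound: the identification of $E_{k,N}$ with the range of the cluster projection $P_{k,N}$ for large $N$ is justified by the singular value convergence $\abs{\sigma_{l,N}-\sigma_l}\leq\opNorm{T_N^{A,\veps}-T^\veps}$ and the isolation of $\sigma_k^2>0$ in the spectrum of a compact operator, the identity $\phi_{k,N}^{A,\veps}-P_k\phi_{k,N}^{A,\veps}=(I-P_k)(P_{k,N}-P_k)\phi_{k,N}^{A,\veps}$ is exactly right, and your fallback $\phi_{k,N}^\veps=0$ for the $N$ where $\opNorm{A_N-A}$ exceeds the perturbation threshold works because in that regime $\opNorm{T_N^{A,\veps}-T^\veps}$ is bounded below by a constant, so the estimate holds after enlarging the multiplicative constant (this dichotomy does not even require finiteness of the exceptional set). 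What the citation buys the paper is brevity and a statement already formulated for non-self-adjoint operators and singular subspaces; what your version buys is transparency about where the $\veps$- and $k$-dependence of the constant enters (the spectral gap of $(T^\veps)^*T^\veps$ around $\sigma_k^2$) and independence from the external reference. The only cosmetic caveat is your phrase \emph{for the finitely many remaining $N$}, which presupposes $T_N^{A,\veps}\to T^\veps$; it is cleaner to phrase the case split purely in terms of whether $\opNorm{A_N-A}$ is below the threshold, as the rest of your argument already does implicitly.
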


\begin{proof}
  This is a direct consequence of \cite[Corollary 4.9]{crane2020singular}, using the definition of the gap between these singular spaces that is used in the Corollary.
\end{proof}

Finally, we discuss the relation for eigenpairs and singular value decomposition between $T_N^{\veps}$ and its extension $T_N^{A,\veps}$. The following proposition and \Cref{rem:extension_choose_map} are closely related to \cite[Section 4.7]{EntropicTransfer22}.

\begin{proposition}[Correspondence of eigenpairs of $T_N^\veps$ and $T_N^{A,\veps}$ in the stationary setting] \label{prop:extension_eigenpairs}
  Consider the stationary setting, Definition \ref{def:op_stationary}, and assume that the optimal transport plan $\gamma_N^\mu \in \Pi(\mu,\mu_N)$ is induced by some map $\phi_N^\mu : \SpX \to \SpX$. Then $\lambda  \neq 0$ is a non-zero eigenvalue of $T_N^{\veps}$ if and only if it is an eigenvalue of $T_N^{A,\veps}$, and $u \in \Leb^2(\mu_N)$ is a corresponding eigenfunction of $T_N^{\veps}$ if and only if $(T_N^\mu)^* u$ is a corresponding eigenfunction of $T_N^{A,\veps}$. All eigenfunctions of $T_N^{A,\veps}$ for non-zero eigenvalues are of the form $(T_N^\mu)^* u$ for some $u \in \Leb^2(\mu^N)$.
\end{proposition}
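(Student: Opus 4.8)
The plan is to exploit the fact that $T_N^\mu$ is induced by a \emph{map} $\phi_N^\mu$, which forces the composition $T_N^\mu \circ (T_N^\mu)^*$ to equal the identity on $\Leb^2(\mu_N)$. Indeed, if $\gamma_N^\mu = (\id,\phi_N^\mu)_\#\mu \in \Pi(\mu,\mu_N)$, then the disintegration of $\gamma_N^\mu$ with respect to its first marginal $\mu$ is the Dirac $\delta_{\phi_N^\mu(x)}$, so by \eqref{eq:induced_op_disintegration} the operator $T_N^\mu$ acts by $(T_N^\mu v)(y) = \int v\,\diff\gamma_N^\mu(x|y)$ while $(T_N^\mu)^*$ acts by pre-composition with $\phi_N^\mu$, i.e. $((T_N^\mu)^* u)(x) = u(\phi_N^\mu(x))$. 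First I would verify carefully from these formulas (using $\phi_{N\#}^\mu \mu = \mu_N$, so that $\phi_N^\mu$ pushes $\mu$-a.e.\ mass onto the atoms of $\mu_N$) that $T_N^\mu \circ (T_N^\mu)^* = \mathrm{Id}_{\Leb^2(\mu_N)}$. The composition in the other order, $P \assign (T_N^\mu)^* \circ T_N^\mu$, is an orthogonal projection on $\Leb^2(\mu)$ (it is idempotent and self-adjoint), with range equal to $\operatorname{ran}((T_N^\mu)^*)$.

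With this identity in hand, recall $T_N^{A,\veps} = (T_N^\mu)^* \circ T_N^\veps \circ T_N^\mu$ (Definition \ref{def:op_stationary}). For the forward direction: if $T_N^\veps u = \lambda u$ with $\lambda\neq 0$ and $u\in\Leb^2(\mu_N)$, set $\tilde u \assign (T_N^\mu)^* u$. Then
\begin{align*}
  T_N^{A,\veps}\tilde u
  = (T_N^\mu)^* T_N^\veps T_N^\mu (T_N^\mu)^* u
  = (T_N^\mu)^* T_N^\veps u
  = \lambda (T_N^\mu)^* u
  = \lambda \tilde u,
\end{align*}
and $\tilde u\neq 0$ because $T_N^\mu \tilde u = u \neq 0$. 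Conversely, if $T_N^{A,\veps} w = \lambda w$ with $\lambda\neq 0$, apply $T_N^\mu$ to both sides and set $u \assign T_N^\mu w$; then $\lambda u = T_N^\mu T_N^{A,\veps} w = T_N^\mu (T_N^\mu)^* T_N^\veps T_N^\mu w = T_N^\veps (T_N^\mu w) = T_N^\veps u$, and $u\neq 0$ since $\lambda w\neq 0$. Moreover $w = \tfrac1\lambda T_N^{A,\veps} w$ lies in $\operatorname{ran}((T_N^\mu)^*)$, so $w = P w = (T_N^\mu)^* T_N^\mu w = (T_N^\mu)^* u$, which simultaneously establishes that $w$ is of the claimed form $(T_N^\mu)^* u$ and that the correspondence $u \leftrightarrow (T_N^\mu)^* u$ is the asserted bijection between the two eigenspaces (its inverse being $w\mapsto T_N^\mu w$). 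The final sentence of the statement is exactly the observation that every non-zero-eigenvalue eigenfunction $w$ of $T_N^{A,\veps}$ satisfies $w\in\operatorname{ran}((T_N^\mu)^*)$, hence $w=(T_N^\mu)^*u$ with $u = T_N^\mu w$.

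The main obstacle is the bookkeeping at the level of $\Leb^2$ spaces and measure-zero ambiguities: one must be careful that $\phi_N^\mu$ being a transport \emph{map} really gives $T_N^\mu(T_N^\mu)^* = \mathrm{Id}$ (this uses that the target $\mu_N$ is purely atomic, so that $(\phi_N^\mu)^{-1}$ of each atom carries the right $\mu$-mass $\tfrac1N$, and that functions in $\Leb^2(\mu_N)$ are determined by their values on the $N$ atoms); the reverse composition being only a projection — not the identity — is precisely why the "all eigenfunctions are of the form $(T_N^\mu)^* u$" clause is needed and is not automatic. Everything else is the elementary linear-algebra argument that an eigenvalue problem is preserved under conjugation by a pair of maps whose product is the identity on one side. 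I would also remark (cf.\ \Cref{rem:extension_choose_map}) that such a map $\phi_N^\mu$ exists whenever $\mu$ is non-atomic, so the hypothesis is mild.
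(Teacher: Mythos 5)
Your proposal is correct and follows essentially the same route as the paper: the paper establishes that $(T_N^\mu)^* v = v\circ\phi_N^\mu$ is an isometric embedding of $\Leb^2(\mu_N)$ into $\Leb^2(\mu)$ (equivalently, $T_N^\mu(T_N^\mu)^*=\mathrm{Id}_{\Leb^2(\mu_N)}$, your key identity) and then identifies $T_N^{A,\veps}$ with $T_N^\veps$ on the range of $(T_N^\mu)^*$ and with zero on its orthogonal complement. You merely spell out the resulting eigenpair correspondence more explicitly; there is no substantive difference.
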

\begin{proof}
  In this setting we have
  \begin{align*}
    \langle T_N^\mu u, v \rangle_{\Leb^2(\mu_N)} = \int_{\SpX^2} u(x)\,v(y)\,\diff \gamma_N^\mu(x,y)
    =
    \int_{\SpX} u(x)\,v(\phi_N^\mu(x))\,\diff \mu(x)
    \quad 
    \tn{for} \quad u \in \Leb^2(\mu),\, v \in \Leb^2(\mu_N),
  \end{align*}
  and therefore $(T_N^\mu)^* v=v \circ \phi_N^\mu$. Therefore
  \begin{align*}
    \langle (T_N^\mu)^* u, (T_N^\mu)^* v \rangle_{\Leb^2(\mu)}
    =
    \langle u, v \rangle_{\Leb^2(\mu_N)}
    \quad 
    \tn{for} \quad u,v \in \Leb^2(\mu_N),
  \end{align*}
  that is, $(T_N^\mu)^*$ is an isometric embedding of $\Leb^2(\mu_N)$ into $\Leb^2(\mu)$.
  Therefore, the restriction of $T_N^{A,\veps}$ to the image of $(T_N^\mu)^*$ can be identified with $T_N^\veps$, and $T_N^{A,\veps}$ is zero on the orthogonal complement. This implies the claim.
\end{proof}
In full analogy one obtains the following result for the non-stationary case.
\begin{proposition}[Correspondence of singular value decompositions of $T_N^\veps$ and $T_N^{A,\veps}$ in the non-stationary setting] \label{prop:extension_svd}
  Consider the non-stationary setting, Definitions \ref{def:entr_trans_basic} and \ref{def:T_extension}, and assume that the optimal transport plans $\gamma_N^\mu$ and $\gamma_N^\nu$ are induced by maps $\phi_N^\mu$ and $\phi_N^\nu$. Then $\lambda >0$ is a singular value of $T_N^{\veps}$ if and only if it is a singular value of $T_N^{A,\veps}$. Furthermore $u \in \Leb^2(\mu_N)$ and $v \in \Leb^2(\nu_N)$ are corresponding left- and right-singular functions of $T_N^{\veps}$ if and only if their piecewise constant extenstions $(T_N^\mu)^* u$ and $(T_N^\nu)^* v$ are corresponding left- and right-singular functions of $T_N^{A,\veps}$.
  All singular functions of $T_N^{A,\veps}$ for non-zero singular values are of the form $(T_N^\mu)^* u$ or $(T_N^\nu)^* v$ for some $u \in \Leb^2(\mu^N)$, $v \in \Leb^2(\nu^N)$.
\end{proposition}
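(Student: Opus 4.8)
The plan is to repeat the argument of \Cref{prop:extension_eigenpairs} almost verbatim, now carrying along two isometric embeddings instead of one. Since $\gamma_N^\mu$ and $\gamma_N^\nu$ are induced by the maps $\phi_N^\mu$ and $\phi_N^\nu$, the same computation as in the proof of \Cref{prop:extension_eigenpairs} gives $(T_N^\mu)^* v = v \circ \phi_N^\mu$ for $v \in \Leb^2(\mu_N)$ and $(T_N^\nu)^* w = w \circ \phi_N^\nu$ for $w \in \Leb^2(\nu_N)$, and shows that $(T_N^\mu)^*$ is an isometric embedding of $\Leb^2(\mu_N)$ into $\Leb^2(\mu)$ and $(T_N^\nu)^*$ one of $\Leb^2(\nu_N)$ into $\Leb^2(\nu)$. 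I write $V_\mu$ and $V_\nu$ for the respective images. First I would record the immediate consequences $T_N^\mu \circ (T_N^\mu)^* = \id_{\Leb^2(\mu_N)}$, that $(T_N^\mu)^* \circ T_N^\mu$ is the orthogonal projection of $\Leb^2(\mu)$ onto $V_\mu$, and the analogous identities for $\nu$.

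Next I would unfold the definition $T_N^{A,\veps} = (T_N^\nu)^* \circ T_N^\veps \circ T_N^\mu$ to see that $T_N^{A,\veps}$ vanishes on $V_\mu^\perp$, maps into $V_\nu$, and, under the isometric identifications $V_\mu \cong \Leb^2(\mu_N)$ and $V_\nu \cong \Leb^2(\nu_N)$, acts exactly as $T_N^\veps$. Passing to adjoints, $(T_N^{A,\veps})^* = (T_N^\mu)^* \circ (T_N^\veps)^* \circ T_N^\nu$ vanishes on $V_\nu^\perp$, maps into $V_\mu$, and agrees with $(T_N^\veps)^*$ under the same identifications. In other words, with respect to the orthogonal splittings $\Leb^2(\mu) = V_\mu \oplus V_\mu^\perp$ and $\Leb^2(\nu) = V_\nu \oplus V_\nu^\perp$ the operator $T_N^{A,\veps}$ is block-diagonal with blocks (a copy of) $T_N^\veps$ and $0$.

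With this structure the correspondence of singular value decompositions is immediate. Given a pair of unit singular functions $u \in \Leb^2(\mu_N)$, $v \in \Leb^2(\nu_N)$ of $T_N^\veps$ for $\sigma > 0$, i.e.~$T_N^\veps u = \sigma v$ and $(T_N^\veps)^* v = \sigma u$, I would use $T_N^\mu (T_N^\mu)^* = \id$ and $T_N^\nu (T_N^\nu)^* = \id$ to check directly that $T_N^{A,\veps}\,(T_N^\mu)^* u = \sigma\,(T_N^\nu)^* v$ and $(T_N^{A,\veps})^*\,(T_N^\nu)^* v = \sigma\,(T_N^\mu)^* u$, the embedded vectors again being of unit norm since the embeddings are isometric; hence $(T_N^\mu)^* u$ and $(T_N^\nu)^* v$ are corresponding singular functions of $T_N^{A,\veps}$ for $\sigma$. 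Conversely, if $\psi \in \Leb^2(\mu)$ and $\phi \in \Leb^2(\nu)$ are unit singular functions of $T_N^{A,\veps}$ for $\sigma > 0$, i.e.~$T_N^{A,\veps}\psi = \sigma\phi$ and $(T_N^{A,\veps})^*\phi = \sigma\psi$, then $\sigma\psi = (T_N^{A,\veps})^*\phi \in V_\mu$ and $\sigma\phi = T_N^{A,\veps}\psi \in V_\nu$, so $\psi = (T_N^\mu)^* u$ and $\phi = (T_N^\nu)^* v$ for unit $u \in \Leb^2(\mu_N)$, $v \in \Leb^2(\nu_N)$; substituting back and using injectivity of $(T_N^\mu)^*$ and $(T_N^\nu)^*$ yields $T_N^\veps u = \sigma v$ and $(T_N^\veps)^* v = \sigma u$. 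This simultaneously gives the equality of the sets of non-zero singular values, the claimed bijection of singular functions, and the final assertion that every singular function of $T_N^{A,\veps}$ for a non-zero singular value is of the form $(T_N^\mu)^* u$ or $(T_N^\nu)^* v$.

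I do not anticipate a genuine obstacle: the key geometric fact, that $(T_N^\mu)^*$ and $(T_N^\nu)^*$ are isometric embeddings onto subspaces on which $T_N^{A,\veps}$ restricts to a copy of $T_N^\veps$ while vanishing on the complements, is already contained in the proof of the stationary case. The only point requiring a little care is the two-sided bookkeeping, namely ensuring that the block of $T_N^{A,\veps}$ mapping $V_\mu^\perp$ into $V_\nu^\perp$ contributes no spurious non-zero singular value; this is exactly what $T_N^{A,\veps}|_{V_\mu^\perp} = 0$ and $T_N^{A,\veps}(\Leb^2(\mu)) \subseteq V_\nu$ rule out. Alternatively, one could reduce to the eigenvalue case by noting that $(T_N^{A,\veps})^* T_N^{A,\veps} = (T_N^\mu)^* (T_N^\veps)^* T_N^\veps\, T_N^\mu$, which shares its non-zero spectrum with $(T_N^\veps)^* T_N^\veps$.
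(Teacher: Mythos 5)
Your proposal is correct and follows exactly the route the paper intends: the paper omits the proof, stating only that it follows "in full analogy" to \Cref{prop:extension_eigenpairs}, and your argument is precisely that analogy carried out — the two isometric embeddings $(T_N^\mu)^*$, $(T_N^\nu)^*$, the block structure of $T_N^{A,\veps}$ relative to $V_\mu\oplus V_\mu^\perp$ and $V_\nu\oplus V_\nu^\perp$, and the two-sided verification of the singular-function relations.
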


\begin{remark}[Approximation of plans $\gamma_N^\mu$ and $\gamma_N^\nu$ by transport maps] \label{rem:extension_choose_map}
  Of course, the optimal plans $\gamma_N^\mu$ and $\gamma_N^\nu$ will in general not necessarily be induced by maps. A sufficient condition for this is, for instance, if $\SpX \subset \R^d$ and $\mu, \nu \ll \Lebesgue$, by virtue of Brenier's theorem \cite{brenier1991polar}. However, for the convergence analysis in Section \ref{subsec:quant_cvg} it is not necessary that the plans are actually optimal, as long as their induced transport costs tend to zero sufficiently fast as $N \to \infty$.
  For instance, by \cite[Theorem 1.32]{santambrogio2015optimal}, when $\SpX$ is a compact subset of $\R^d$, plans induced by maps are dense in the set of all plans $\Pi(\mu,\mu_N)$ as long as $\mu$ has no atoms. For any factor $q>1$ it is therefore always possible to find a plan of the form $\gamma_N^\mu = (\id,\phi_N^\mu)_{\#}\mu$ such that
  \begin{align*}
    \int_{\SpX^2} d^2\,\diff \gamma_N^\mu 
    = 
    \int_{\SpX} d(x,\phi_N^\mu(x))^2\,\diff \mu(x) \leq q \cdot W_2^2(\mu,\mu_N).
  \end{align*}
  Therefore, Proposition \ref{prop:cross_operators} (and its stationary variant, see Section \ref{subsec:stationary}) also hold when $T^{A,\veps}_N$ is constructed with these approximate plans that are induced by maps, if the multiplicative constant is increased slightly.
\end{remark}

\subsection{Out-of-sample embedding} \label{subsec:out_of_sampling}
Eigen- and singular functions for large eigen- and singular values of $T^\veps$ and its discrete approximation $T_N^\veps$ are important tools for analyzing the prominent features of the system dynamics. They can be used in methods such as spectral embedding and spectral clustering and give a coarse-grained description of the system.

For simplicity, in the following discussion we consider the stationary setting (Section \ref{subsec:stationary}), but analogous results can be obtained for the non-stationary setting.
Assume that based on some observed samples $(x_i,y_i)_{i=1}^N$ we have computed $T_N^\veps$, extracted some relevant eigenfunctions numerically and generated a spectral embedding of the samples.
Now, additional samples $(x_i,y_i)_{i=N+1}^{N+M}$ become available and we would like to insert them into the spectral embedding for some subsequent analysis of the system at hand.
It may be impractical to recompute $T_{N+M}^\veps$ and its eigenvectors each time some samples are added, or it may even be intractable for very large $M$. For subsequent analysis tasks it could instead be sufficient if an approximate interpolation of eigenfunctions $u$ of $T_N^\veps$ to the new samples $(x_i)_{i=N+1}^{M+N}$ was available.

Eigenfunctions $u$ of $T_N^\veps$ live in $\Leb^2(\mu_N)$. By \Cref{prop:extension_eigenpairs} (see also Remark \ref{rem:extension_choose_map}) any $u$ can be extended to an eigenfunction $(T_N^\mu)^*u \in \Leb^2(\mu)$ of $T_N^{A,\veps}$ that could be evaluated almost surely at the new positions $(x_i)_{i=N+1}^{M+N}$.
However, the transport plan $\gamma_\mu^N$ or map $\phi^\mu_N$ underlying $T_N^\mu$ is unknown in practice. In addition, the extended function $(T_N^\mu)^*u=u \circ \phi_N^\mu$ is piecewise constant and may therefore be undesirable as an interpolation for spectral embedding.

In this section we propose an alternative interpolation scheme, exploiting the regularity of entropic transport kernels (\Cref{def:entOT_kernel}) and the induced regularized operator kernel $t_N^\veps$ (\Cref{def:empirical_operator}), which is consistent in the limit $N \to \infty$ (and potentially in a suitable joint limit with $\veps \to 0$, see Section \ref{subsec:cv_Teps2T}).
By boundedness and equicontinuity of the family of functions $(t_N^\veps(x,\cdot))_{x \in \SpX}$ (see Section \ref{subsec:preliminary}), 
equation \eqref{eq:TEpsKernelN} defining $T^{\veps}_N$ maps $u \in \Leb^2(\mu_N)$ to a continuous function.
Indeed, $T^{\veps}_N$ can be interpreted as a compact operator from $\Leb^2(\mu_N)$ to $\Cont(\SpX) \hookrightarrow \Leb^2(\mu_N)$.
Let us denote this operator by
\begin{equation*}
  \widetilde{T}_N^\veps : \Leb^2(\mu_N) \to \Cont(\SpX),
  \quad
  u \mapsto 
  \int_{\SpX} u(x) t_N^\veps(x,\cdot)\,\diff \mu_N(x).
\end{equation*}
By definition one has $\widetilde{T}_N^\veps u = T_N^\veps u$ $\mu_N$-almost everywhere, and therefore $\widetilde{T}_N^\veps$ can indeed be interpreted as interpolation of $T_N^\veps$ from $\spt(\mu_N)$ to all of $\SpX$.
Let now $u \in \Leb^2(\mu_N)$ be an eigenfunction of $T_N^\veps$ for some eigenvalue $\lambda \neq 0$. Then by definition $u=\tfrac1\lambda T_N^\veps u$.
We therefore introduce the extension of $u$ to $\SpX$ as
\begin{align} \label{eq:out_of_sample}
  \widetilde{u} \assign \frac1\lambda \widetilde{T}_N^\veps u.
\end{align}
This extension satisfies $\widetilde{u}=u$ $\mu_N$-almost everywhere.
In addition (see \Cref{prop:extension_eigenpairs}), $(T_N^\mu)^*u \in \Leb^2(\mu)$ is an eigenfunction of $T_N^{A,\veps}$ for the same eigenvalue $\lambda$ and $(T_N^\mu)^*u=\tfrac1\lambda (T_N^\mu)^* T_N^\veps u$. The following estimate in the spirit of \Cref{prop:cross_operators} can then be used to control the discrepancy between $\widetilde{u}$ and $(T_N^\mu)^*u$. Combined with results from Section \ref{subsec:SpectralConvergence} this implies asymptotic consistency of the interpolation in the limit $N \to \infty$ (and $\veps \to 0$, when appropriate).

\begin{proposition} \label{prop:interpolation}
  Consider the stationary setting, and let Assumptions  \ref{ass:lip_c} and \ref{ass:min_ball_mass} hold. Let $N\in \N$, $\veps>0$ sufficiently small and $\tau < 1$ such that $\tau \geq N \exp\left(-\tfrac{N}{2}(C_\nu \veps^{\mathcal{D}_\nu})^2\right)$.
  For $u \in \Leb^2(\mu_N)$ set $\varphi \assign (T_N^\mu)^* T_N^\veps u$ and $\widetilde{\varphi} \assign \widetilde{T}_N^\veps u$. Then with probability at least $1-\tau$
  \begin{align*}
    \norm{\widetilde{\varphi}-\varphi}_{\Leb^2(\mu)}
    \lesssim
    \norm{u}_{\Leb^2(\mu_N)} \frac{W_2(\mu_N,\mu)}{\veps^{1+\mathcal{D}_\nu}}.
  \end{align*}
  where the constant depends only on $C_{\nu}$ and $\Lip(c)$.
\end{proposition}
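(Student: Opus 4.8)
The plan is to realize $\varphi$ as a $\gamma^\mu_N$-barycentric average of the continuous function $\widetilde{\varphi}$ and then to reduce the estimate to a Lipschitz-times-Wasserstein bound in the spirit of \Cref{prop:cross_operators}. By construction $\widetilde{T}_N^\veps u$ is the canonical continuous representative of $T_N^\veps u \in \Leb^2(\mu_N)$, so $\widetilde{\varphi} = \widetilde{T}_N^\veps u$ agrees $\mu_N$-almost everywhere with $T_N^\veps u$. For $\mu$-almost every $x$ the disintegration $\gamma^\mu_N(\cdot \mid x)$ is a probability measure carried by $\spt(\mu_N)$, hence absolutely continuous with respect to $\mu_N$; together with the identity $((T^\mu_N)^* v)(x) = \int_\SpX v\,\diff \gamma^\mu_N(\cdot \mid x)$ (cf.~\Cref{prop:inducedTO} and \Cref{def:op_stationary}) this gives
\begin{align*}
  \varphi(x) = \int_\SpX (T_N^\veps u)(y)\,\diff \gamma^\mu_N(y \mid x) = \int_\SpX \widetilde{\varphi}(y)\,\diff \gamma^\mu_N(y \mid x).
\end{align*}
Since $\gamma^\mu_N(\cdot\mid x)$ has unit mass we may subtract $\widetilde{\varphi}(x) = \int_\SpX \widetilde{\varphi}(x)\,\diff\gamma^\mu_N(y\mid x)$ to obtain $\widetilde{\varphi}(x) - \varphi(x) = \int_\SpX \bigl(\widetilde{\varphi}(x) - \widetilde{\varphi}(y)\bigr)\,\diff \gamma^\mu_N(y\mid x)$.

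Next I would apply Jensen's inequality in the $y$-integral, integrate against $\mu$, and use that $\gamma^\mu_N$ is optimal for the quadratic cost $c=d^2$, obtaining
\begin{align*}
  \norm{\widetilde{\varphi}-\varphi}_{\Leb^2(\mu)}^2 \leq \int_{\SpX^2}\bigl(\widetilde{\varphi}(x)-\widetilde{\varphi}(y)\bigr)^2\,\diff\gamma^\mu_N(x,y) \leq \Lip(\widetilde{\varphi})^2\,W_2^2(\mu_N,\mu).
\end{align*}
A Lipschitz constant for $\widetilde{\varphi}$ is read off from $\widetilde{\varphi}(y) = \int_\SpX u(z)\,t_N^\veps(z,y)\,\diff\mu_N(z)$: by the triangle inequality and Cauchy--Schwarz (recall $\mu_N$ is a probability measure),
\begin{align*}
  \Lip(\widetilde{\varphi}) \leq \norm{u}_{\Leb^1(\mu_N)}\sup_{z\in\SpX}\Lip\bigl(t_N^\veps(z,\cdot)\bigr) \leq \norm{u}_{\Leb^2(\mu_N)}\sup_{z\in\SpX}\Lip\bigl(t_N^\veps(z,\cdot)\bigr).
\end{align*}

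It remains to control $\sup_{z\in\SpX}\Lip(t_N^\veps(z,\cdot))$ with high probability. Since $t_N^\veps = (\kernel{\mu_N\mu_N}{\veps}:\pi_N:\kernel{\nu_N\mu_N}{\veps})$, the swapped-marginals version of the second item of \Cref{cor:empirical_lip_bounds}, combined with \Cref{ass:min_ball_mass} for $\nu$ (applicable since $\veps$ is small), shows that on an event of probability at least $1 - N\exp\bigl(-\tfrac{N}{2}(C_\nu\veps^{\mathcal{D}_\nu})^2\bigr) \geq 1-\tau$ one has $\sup_{z\in\SpX}\Lip(t_N^\veps(z,\cdot)) \lesssim \veps^{-1-\mathcal{D}_\nu}$ with a constant depending only on $C_\nu$ and $\Lip(c)$. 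Substituting this into the previous display yields the asserted estimate.

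The only point requiring genuine care is the very first step, namely the identity $\varphi(x) = \int_\SpX \widetilde{\varphi}\,\diff\gamma^\mu_N(\cdot\mid x)$: here one must use that $(T^\mu_N)^*$ evaluates the $\Leb^2(\mu_N)$-class $T_N^\veps u$ only on $\spt(\mu_N)$, where it is pinned down precisely by the continuous representative $\widetilde{\varphi}$, so that $\varphi$ is genuinely a $\gamma^\mu_N$-average of $\widetilde{\varphi}$ and the Lipschitz regularity of $\widetilde{\varphi}$ can be exploited. Once this is in place, the remaining estimates are the same Jensen-plus-Lipschitz-plus-optimal-transport computation already carried out in the proof of \Cref{prop:cross_operators}.
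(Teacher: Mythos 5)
Your proposal is correct and follows essentially the same route as the paper's proof: write $\varphi$ as the $\gamma^\mu_N$-barycentric average of the continuous kernel representation, apply Jensen, exploit the Lipschitz bound on $t_N^\veps(x,\cdot)$ from \Cref{cor:empirical_lip_bounds} together with the optimality of $\gamma^\mu_N$ to produce the $W_2$ factor. The only cosmetic difference is that you extract $\norm{u}_{\Leb^1(\mu_N)}\le\norm{u}_{\Leb^2(\mu_N)}$ before the Jensen step while the paper keeps $\abs{u(x)}^2$ inside the integral until the end; both yield the identical bound.
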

\begin{proof}
  Using Jensen's inequality,
  \begin{align*}
    \norm{\widetilde{\varphi}-\varphi}_{\Leb^2(\mu)}^2
    &=
    \int_\SpX
    \abs{
      \int_\SpX u(x)\,t_N^\veps(x,y)\,\diff\mu_N(x)
      - \int_\SpX\int_\SpX u(x)\,t^\veps_N(x,y')\,\diff\mu_N(x)\,\diff\gamma_{N}^\mu(y'|y)
    }^2
    \diff\mu(y)
    \\&\leq 
    \int_{\SpX^3}
    \abs{u(x)}^2\abs{t^\veps_N(x,y)-t^\veps_N(x,y')}^2\diff\mu_N(x)
    \,\diff\gamma_{N}^\mu(y,y')
    \\&\leq 
    \int_{\SpX^3}
    \abs{u(x)}^2\Lip(t^\veps_N(x,\cdot))^2d(y,y')^2\diff\mu_N(x)\,\diff\gamma_{N}^\mu(y,y')
    \\&\leq
    \norm{u}_{\Leb^2(\mu_N)}^2 \cdot \sup_{x\in \SpX}\Lip(t^\veps_N(x,\cdot))^2 \cdot W^2_2(\mu,\mu_N).
  \end{align*}
  The statement follows since by the assumptions on $N$, $\veps$ and $\tau$, with probability at least $1-\tau$, by \Cref{cor:empirical_lip_bounds} one has
  $\Lip(t^\veps_N(x,\cdot)) \lesssim \veps^{-1-\mathcal{D}_\nu}$.
\end{proof}

\begin{corollary}
  Consider the setting of \Cref{prop:interpolation} and assume that $u \in \Leb^2(\mu_N)$ is an eigenfunction of $T_N^\veps$ for eigenvalue $\lambda \neq 0$. Set $\widetilde{u} \assign \tfrac1\lambda \widetilde{T}_N^\veps u$. Then with probability at least $1-\tau$,
  \begin{align*}
    \norm{\widetilde{u}-(T_N^\mu)^* u}_{\Leb^2(\mu)}
    \lesssim
    \frac{1}{\abs{\lambda}} \norm{u}_{\Leb^2(\mu_N)} \frac{W_2(\mu_N,\mu)}{\veps^{1+\mathcal{D}_\nu}}.
  \end{align*}
\end{corollary}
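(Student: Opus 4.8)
The plan is to derive this directly from \Cref{prop:interpolation} by unwinding the eigenfunction relation. First I would observe that, since $u$ is an eigenfunction of $T_N^\veps$ with eigenvalue $\lambda \neq 0$, one has $u = \tfrac{1}{\lambda} T_N^\veps u$ in $\Leb^2(\mu_N)$; applying the bounded linear operator $(T_N^\mu)^*$ to both sides gives $(T_N^\mu)^* u = \tfrac{1}{\lambda} (T_N^\mu)^* T_N^\veps u = \tfrac{1}{\lambda}\varphi$ with $\varphi \assign (T_N^\mu)^* T_N^\veps u$ exactly as in \Cref{prop:interpolation}. (Alternatively, \Cref{prop:extension_eigenpairs} identifies $(T_N^\mu)^* u$ as the eigenfunction of $T_N^{A,\veps}$ for the same $\lambda$, which is the same statement.) Likewise, by the very definition $\widetilde{u} = \tfrac{1}{\lambda}\widetilde{T}_N^\veps u = \tfrac{1}{\lambda}\widetilde{\varphi}$ with $\widetilde{\varphi} \assign \widetilde{T}_N^\veps u$ as in \Cref{prop:interpolation}.

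Having matched notation, I would then factor out $\tfrac{1}{\abs{\lambda}}$ and quote \Cref{prop:interpolation}:
\[
  \norm{\widetilde{u}-(T_N^\mu)^* u}_{\Leb^2(\mu)}
  = \frac{1}{\abs{\lambda}}\,\norm{\widetilde{\varphi}-\varphi}_{\Leb^2(\mu)}
  \lesssim \frac{1}{\abs{\lambda}}\,\norm{u}_{\Leb^2(\mu_N)}\,\frac{W_2(\mu_N,\mu)}{\veps^{1+\mathcal{D}_\nu}},
\]
where the inequality holds with probability at least $1-\tau$ under precisely the hypotheses already imposed (same $N$, $\veps$, $\tau$), and the constant is inherited from \Cref{prop:interpolation}, depending only on $C_\nu$ and $\Lip(c)$.

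There is essentially no obstacle here; the statement is a bookkeeping corollary of \Cref{prop:interpolation}. The only points worth a moment's care are: (i) the probabilistic event is identical to the one used in \Cref{prop:interpolation} --- namely the event from \Cref{cor:empirical_lip_bounds} on which $\sup_{x}\Lip(t_N^\veps(x,\cdot)) \lesssim \veps^{-1-\mathcal{D}_\nu}$ --- so no further union bound or enlargement of constants is needed; and (ii) $\lambda$ is a fixed (possibly $N$-dependent) nonzero scalar, so dividing by it is unproblematic. Nothing beyond \Cref{prop:interpolation} (and, optionally, \Cref{prop:extension_eigenpairs} for the interpretation of $(T_N^\mu)^*u$) is required.
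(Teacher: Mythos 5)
Your proposal is correct and is exactly the argument the paper intends: the corollary is an immediate consequence of \Cref{prop:interpolation} obtained by substituting the eigenfunction identity $u=\tfrac1\lambda T_N^\veps u$ (so that $(T_N^\mu)^*u=\tfrac1\lambda\varphi$ and $\widetilde u=\tfrac1\lambda\widetilde\varphi$) and factoring out $\tfrac{1}{\abs{\lambda}}$, on the same probabilistic event. No further argument is needed.
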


Finally, we briefly discuss the analogue concept for singular value decomposition in the non-stationary setting.
In complete analogy to the above results one obtains the following interpolation and error estimate.
\begin{corollary}
  Consider the setting of \Cref{prop:interpolation}, but now for the non-stationary setting.
  Let $(v,u) \in \Leb^2(\nu_N) \otimes \Leb^2(\mu_N)$ be a pair of left- and right-singular functions of $T_N^\veps$ for singular value $\lambda > 0$, i.e.
  \begin{align*}
    v 
    &= 
    \frac1\lambda T_N^\veps u, 
    &
    u 
    &= 
    \frac1\lambda (T_N^\veps)^* v.
  \end{align*}
  Introduce the interpolations $\widetilde{v},\widetilde{u} \in \Cont(\SpX)$ of $v,u$ as
  \begin{align*}
    \widetilde{v} 
    &: 
    y \mapsto \frac{1}{\lambda}\langle t_N^\veps(\cdot,y),u \rangle_{\Leb^2(\mu_N)}, 
    &
    \widetilde{u} 
    &: 
    x \mapsto \frac{1}{\lambda}\langle t_N^\veps(x,\cdot),v \rangle_{\Leb^2(\nu_N)}.
  \end{align*}
  Then, with probability at least $1-\tau_{\nu}$ (resp.~$1-\tau_{\mu}$, see \Cref{cor:empirical_lip_bounds}), one has
  \begin{align*}
    \norm{\widetilde{v}-(T_N^\nu)^* v}_{\Leb^2(\nu)} 
    &\lesssim
    \frac1\lambda \norm{u}_{\Leb^2(\mu_N)} \frac{W_2(\nu_N,\nu)}{\veps^{1+\mathcal{D}_\nu}}, 
    &
    \norm{\widetilde{u}-(T_N^\mu)^* u}_{\Leb^2(\mu)} 
    &\lesssim
    \frac1\lambda \norm{v}_{\Leb^2(\nu_N)} \frac{W_2(\mu_N,\mu)}{\veps^{1+\mathcal{D}_\mu}}.
  \end{align*}
\end{corollary}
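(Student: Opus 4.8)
The plan is to replicate, twice and symmetrically, the computation behind \Cref{prop:interpolation} and its corollary. First I would recast both error terms into the template used there: ``continuous‑kernel realization of an operator, minus that operator composed with $(T_N^\mu)^*$ or $(T_N^\nu)^*$''. Writing $\widetilde{T}_N^\veps : \Leb^2(\mu_N) \to \Cont(\SpX)$ for the continuous‑valued version of $T_N^\veps$ from \eqref{eq:TEpsKernelN}, one has $y \mapsto \langle t_N^\veps(\cdot,y),u\rangle_{\Leb^2(\mu_N)} = (\widetilde{T}_N^\veps u)(y)$, so $\widetilde{v} = \tfrac1\lambda \widetilde{T}_N^\veps u$; dually, $x \mapsto \langle t_N^\veps(x,\cdot),v\rangle_{\Leb^2(\nu_N)}$ is the continuous‑kernel realization of $(T_N^\veps)^* v$, well defined and continuous because $(t_N^\veps(\cdot,y))_{y\in\SpX}$ is bounded and equicontinuous by the swapped‑marginals part of \Cref{cor:empirical_lip_bounds} (equivalently \Cref{cor:lip_transfer_kernel}), so $\widetilde{u}$ is $\tfrac1\lambda$ times that function. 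On the other hand, the singular‑function relations $v = \tfrac1\lambda T_N^\veps u$ and $u = \tfrac1\lambda (T_N^\veps)^* v$ give $(T_N^\nu)^* v = \tfrac1\lambda (T_N^\nu)^* T_N^\veps u$ and $(T_N^\mu)^* u = \tfrac1\lambda (T_N^\mu)^* (T_N^\veps)^* v$. Hence it suffices to bound $\norm{\widetilde{T}_N^\veps u - (T_N^\nu)^* T_N^\veps u}_{\Leb^2(\nu)}$ and the analogous expression on the $\mu$‑side, and then divide by $\lambda$.

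For the first bound I would expand $(T_N^\nu)^*$ as $w \mapsto \int_\SpX w(y')\,\diff\gamma_N^\nu(y'|\cdot)$, so that the difference evaluated at $y$ equals $\int_{\SpX^2} u(x)\,(t_N^\veps(x,y') - t_N^\veps(x,y))\,\diff\mu_N(x)\,\diff\gamma_N^\nu(y'|y)$, then apply Jensen's inequality in $(x,y')$, the Lipschitz estimate $\abs{t_N^\veps(x,y')-t_N^\veps(x,y)} \le \Lip(t_N^\veps(x,\cdot))\,d(y,y')$, and the optimality of $\gamma_N^\nu$ for $c = d^2$, arriving at $\norm{\widetilde{T}_N^\veps u - (T_N^\nu)^* T_N^\veps u}_{\Leb^2(\nu)}^2 \le \norm{u}_{\Leb^2(\mu_N)}^2 \cdot \sup_{x\in\SpX}\Lip(t_N^\veps(x,\cdot))^2 \cdot W_2^2(\nu_N,\nu)$, exactly as in the proof of \Cref{prop:interpolation}. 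By the swapped‑marginals case of \Cref{cor:empirical_lip_bounds}, which controls the Lipschitz constant of $t_N^\veps$ in its second slot through the ball masses of $\nu_N$, with probability at least $1-\tau_\nu$ one has $\sup_x \Lip(t_N^\veps(x,\cdot)) \lesssim \veps^{-1-\mathcal{D}_\nu}$, and the first claimed estimate follows. The bound for $\widetilde{u}$ is the identical computation with $\mu \leftrightarrow \nu$, $u \leftrightarrow v$, and differentiation in the first slot of $t_N^\veps$ rather than the second: here $\Lip(t_N^\veps(\cdot,y)) \lesssim \veps^{-1-\mathcal{D}_\mu}$ with probability at least $1-\tau_\mu$, and the optimal plan $\gamma_N^\mu$ contributes $W_2^2(\mu_N,\mu)$.

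I do not expect a substantial obstacle; the only point that needs genuine care is the bookkeeping — keeping straight, in each of the two cases, which argument of $t_N^\veps$ is being differentiated, hence which empirical marginal's small‑ball lower bound is invoked and therefore which of $\tau_\mu,\tau_\nu$ and which effective dimension $\mathcal{D}_\mu,\mathcal{D}_\nu$ enters — together with confirming that $(T_N^\veps)^*$ really admits the continuous kernel representation used to define $\widetilde{u}$, which is just the symmetric counterpart of the equicontinuity property already exploited for $\widetilde{T}_N^\veps$. Everything beyond that is the same Jensen–Lipschitz–optimal‑plan chain that proves \Cref{prop:interpolation}.
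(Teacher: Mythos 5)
Your proposal is correct and follows essentially the same route as the paper: the paper's proof likewise reduces both estimates to the Jensen--Lipschitz--optimal-plan computation of \Cref{prop:interpolation}, obtaining $\norm{\widetilde{v}-(T_N^\nu)^*v}_{\Leb^2(\nu)}^2 \leq \lambda^{-2}\norm{u}_{\Leb^2(\mu_N)}^2 \sup_x \Lip(t_N^\veps(x,\cdot))^2\, W_2^2(\nu_N,\nu)$ and then invoking \Cref{cor:empirical_lip_bounds} with probability $1-\tau_\nu$, the second bound following by symmetry. Your extra care with which slot of $t_N^\veps$ is differentiated and hence which of $\tau_\mu,\tau_\nu$ and $\mathcal{D}_\mu,\mathcal{D}_\nu$ enters is exactly the right bookkeeping and matches the statement.
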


\begin{proof}
  Similar calculations as in the proof of \Cref{prop:interpolation} give
  \begin{align*}
    \norm{\widetilde{v} - (T^{\nu}_N)^{\ast} v}_{\Leb^2(\nu)}^2
    \leq
    \frac{1}{\lambda^2} 
    \norm{u}^2_{\Leb^2(\mu_N)}
    \sup_{x\in \SpX} 
    \Lip(t^{\veps}_N(x,\cdot))^2 
    W_2^2(\nu_N, \nu).
  \end{align*}
  Taking the square root and using \Cref{cor:empirical_lip_bounds}, with probability at least $1 - \tau_{\nu}$ this gives the first result. The second one follows symmetrically.
\end{proof}

\section{Examples and numerical experiments} \label{sec:numerics}
Code for the numerical examples is available at \url{https://github.com/OTGroupGoe/StochasticETO}.
\subsection{Non-convergence for single blur} \label{subsec:exampleNonConv}
In this article we constructed the entropic transfer operator by applying two blurring steps $T^{\veps} = G^{\veps}_{\nu\nu} \circ T \circ G^{\veps}_{\mu\mu}$ (see \eqref{eq:TEps}) as opposed to a single one $\widetilde{T}^{\veps} \assign G^{\veps}_{\nu\nu} \circ T$ as in \cite{EntropicTransfer22}.
We will now give an example for a non-deterministic $T$ (i.e. $T$ is not induced by a time evolution map $F$) where double blurring is required for convergence in Hilbert--Schmidt norm.

Let $\SpX \assign [0,1]$, $\mu \assign\mathcal{U}(\SpX)$, $\nu \assign \frac{1}{2}(\delta_0 + \delta_1)$, $\pi \assign \mu \otimes \nu$ and use squared Euclidean distance cost.
By \eqref{eq:induced_op_disintegration} we get for $u \in \Leb^2(\mu)$ and $y \in \{0,1\}$ that $(T u)(y) = \int_{\SpX} u(x) \,\diff\pi(x|y) = \int_{\SpX} u(x) \,\diff\mu(x)$.
Since $(Tu)(y)$ does not depend on $y$, we have $\widetilde{T}^{\veps} = G^{\veps}_{\nu\nu} \circ T = T$ and $T$ has the constant integration kernel $\tilde{t}^{\veps}(x,y) = 1$ for $(x,y) \in \spt(\pi)$.

\begin{figure}[bt]
  \centering
  		\definecolor{C0}{HTML}{1f77b4}
		\definecolor{C1}{HTML}{ff7f0e}
		\begin{tikzpicture}[x=1cm, y=1cm, scale=3.2]
			\tikzstyle{thin}=[draw, color=C0, line width=0.5];
			\tikzstyle{mid}=[draw, color=C0, line width=2];
			\tikzstyle{thick}=[draw, color=C0, line width=3.5];
			\tikzstyle{annotation}=[draw, line width=1];
			
			\pgfmathsetmacro{\h}{0.8}
			\pgfmathsetmacro{\os}{0.15}
			\pgfmathsetmacro{\sp}{0.2}
			\pgfmathsetmacro{\rb}{0.03}
			\pgfmathsetmacro{\rm}{0.02}
			\pgfmathsetmacro{\rs}{0.005}
			\pgfmathsetmacro{\heady}{0.92}

			\fill[color=black]  (-\sp,0) circle[radius=\rb];
			\fill[color=black] (-\sp,\h) circle[radius=\rb];
			\node[] at (-\sp-0.1,0) {$0$};
			\node[] at (-\sp-0.1,\h) {$1$};
			\node[] at (-\sp-0.1,0.5*\h) {$\nu$};

			\begin{scope}[xshift=0, yshift=0]
				\node[] at (0.5,\heady) {$\widetilde{T}^{\varepsilon}$};
				
				\draw[mid, color=black] (0,-\sp) -- (1,-\sp);
				\foreach \x in {0, 1}
					\draw[style=annotation] (\x, -\sp+0.05) -- (\x, -\sp-0.05) node[below] {$\x$};
				\node[] at (0.5, -\sp-0.15) {$\mu$};
				
				\draw[style=mid] (0,0) -- (1,0) (0,\h) -- (1,\h);
			\end{scope}
			
			\begin{scope}[xshift=1.2cm, yshift=0]
				\node[] at (0.5,\heady) {$T^{N}$};
				
				\draw[fill]
				(0.029684677097052282, -\sp) circle[radius=\rm]
				(0.06794445565580626, -\sp) circle[radius=\rm]
				(0.12741319647284527, -\sp) circle[radius=\rm]
				(0.1589986749301918, -\sp) circle[radius=\rm]
				(0.2390952971190599, -\sp) circle[radius=\rm]
				(0.2715377339757441, -\sp) circle[radius=\rm]
				(0.2999697881019155, -\sp) circle[radius=\rm]
				(0.36962233906779784, -\sp) circle[radius=\rm]
				(0.43584001290170404, -\sp) circle[radius=\rm]
				(0.5569385485983827, -\sp) circle[radius=\rm]
				(0.623221577495637, -\sp) circle[radius=\rm]
				(0.681888149168618, -\sp) circle[radius=\rm]
				(0.7479988073961981, -\sp) circle[radius=\rm]
				(0.8145579830400664, -\sp) circle[radius=\rm]
				(0.8732712436640183, -\sp) circle[radius=\rm]
				(0.8972790486294402, -\sp) circle[radius=\rm];
				
				\foreach \x in {0, 1}
					\draw[style=annotation] (\x, -\sp+0.05) -- (\x, -\sp-0.05) node[below] {$\x$};
				\node[] at (0.5, -\sp-0.15) {$\mu^N$};
				
				\draw[color=C1, fill]
				(0.029684677097052282, \h*0) circle[radius=\rm]
				(0.06794445565580626, \h*1) circle[radius=\rm]
				(0.12741319647284527, \h*0) circle[radius=\rm]
				(0.1589986749301918, \h*1) circle[radius=\rm]
				(0.2390952971190599, \h*0) circle[radius=\rm]
				(0.2715377339757441, \h*0) circle[radius=\rm]
				(0.2999697881019155, \h*0) circle[radius=\rm]
				(0.36962233906779784, \h*1) circle[radius=\rm]
				(0.43584001290170404, \h*0) circle[radius=\rm]
				(0.5569385485983827, \h*1) circle[radius=\rm]
				(0.623221577495637, \h*1) circle[radius=\rm]
				(0.681888149168618, \h*0) circle[radius=\rm]
				(0.7479988073961981, \h*1) circle[radius=\rm]
				(0.8145579830400664, \h*1) circle[radius=\rm]
				(0.8732712436640183, \h*1) circle[radius=\rm]
				(0.8972790486294402, \h*0) circle[radius=\rm]
				
				;
			\end{scope}
			
			\begin{scope}[xshift=2.4cm, yshift=0]
				\node[] at (0.5,\heady) {$G^{\varepsilon}_{\nu^N\nu^N} \circ T^{N}$};
				
				\draw[fill]
				(0.029684677097052282, -\sp) circle[radius=\rm]
				(0.06794445565580626, -\sp) circle[radius=\rm]
				(0.12741319647284527, -\sp) circle[radius=\rm]
				(0.1589986749301918, -\sp) circle[radius=\rm]
				(0.2390952971190599, -\sp) circle[radius=\rm]
				(0.2715377339757441, -\sp) circle[radius=\rm]
				(0.2999697881019155, -\sp) circle[radius=\rm]
				(0.36962233906779784, -\sp) circle[radius=\rm]
				(0.43584001290170404, -\sp) circle[radius=\rm]
				(0.5569385485983827, -\sp) circle[radius=\rm]
				(0.623221577495637, -\sp) circle[radius=\rm]
				(0.681888149168618, -\sp) circle[radius=\rm]
				(0.7479988073961981, -\sp) circle[radius=\rm]
				(0.8145579830400664, -\sp) circle[radius=\rm]
				(0.8732712436640183, -\sp) circle[radius=\rm]
				(0.8972790486294402, -\sp) circle[radius=\rm];
				
				\foreach \x in {0, 1}
				\draw[style=annotation] (\x, -\sp+0.05) -- (\x, -\sp-0.05) node[below] {$\x$};
				\node[] at (0.5, -\sp-0.15) {$\mu^N$};
				
				\draw[color=C1, fill]
				(0.029684677097052282, 0) circle[radius=\rm]
				(0.029684677097052282, \h) circle[radius=\rs]
				(0.06794445565580626, 0) circle[radius=\rs]
				(0.06794445565580626, \h) circle[radius=\rm]
				(0.12741319647284527, 0) circle[radius=\rm]
				(0.12741319647284527, \h) circle[radius=\rs]
				(0.1589986749301918, 0) circle[radius=\rs]
				(0.1589986749301918, \h) circle[radius=\rm]
				(0.2390952971190599, 0) circle[radius=\rm]
				(0.2390952971190599, \h) circle[radius=\rs]
				(0.2715377339757441, 0) circle[radius=\rm]
				(0.2715377339757441, \h) circle[radius=\rs]
				(0.2999697881019155, 0) circle[radius=\rm]
				(0.2999697881019155, \h) circle[radius=\rs]
				(0.36962233906779784, 0) circle[radius=\rs]
				(0.36962233906779784, \h) circle[radius=\rm]
				(0.43584001290170404, 0) circle[radius=\rm]
				(0.43584001290170404, \h) circle[radius=\rs]
				(0.5569385485983827, 0) circle[radius=\rs]
				(0.5569385485983827, \h) circle[radius=\rm]
				(0.623221577495637, 0) circle[radius=\rs]
				(0.623221577495637, \h) circle[radius=\rm]
				(0.681888149168618, 0) circle[radius=\rm]
				(0.681888149168618, \h) circle[radius=\rs]
				(0.7479988073961981, 0) circle[radius=\rs]
				(0.7479988073961981, \h) circle[radius=\rm]
				(0.8145579830400664, 0) circle[radius=\rs]
				(0.8145579830400664, \h) circle[radius=\rm]
				(0.8732712436640183, 0) circle[radius=\rs]
				(0.8732712436640183, \h) circle[radius=\rm]
				(0.8972790486294402, 0) circle[radius=\rm]
				(0.8972790486294402, \h) circle[radius=\rs];
			\end{scope}
			
			\begin{scope}[xshift=3.6cm, yshift=0]	
				\node[] at (0.5,\heady) {$G^{\varepsilon}_{\nu^N\nu^N} \circ T^{N} \circ T^{\mu}_N$};
				
				\draw[mid, color=black] (0,-\sp) -- (1,-\sp);
				\foreach \x in {0, 1}
					\draw[style=annotation] (\x, -\sp+0.05) -- (\x, -\sp-0.05) node[below] {$\x$};
				\node[] at (0.5, -\sp-0.15) {$\mu$};
				
				\draw[color=C1, fill, opacity=0.5]
				(0.0, 0) -- (0.029684677097052282, 0+\os) circle[radius=\rm] -- (0.0625, 0)
				(0.0, \h) -- (0.029684677097052282, \h-\os) circle[radius=\rs] -- (0.0625, \h)
				(0.0625, 0) -- (0.06794445565580626, 0+\os) circle[radius=\rs] -- (0.125, 0)
				(0.0625, \h) -- (0.06794445565580626, \h-\os) circle[radius=\rm] -- (0.125, \h)
				(0.125, 0) -- (0.12741319647284527, 0+\os) circle[radius=\rm] -- (0.1875, 0)
				(0.125, \h) -- (0.12741319647284527, \h-\os) circle[radius=\rs] -- (0.1875, \h)
				(0.1875, 0) -- (0.1589986749301918, 0+\os) circle[radius=\rs] -- (0.25, 0)
				(0.1875, \h) -- (0.1589986749301918, \h-\os) circle[radius=\rm] -- (0.25, \h)
				(0.25, 0) -- (0.2390952971190599, 0+\os) circle[radius=\rm] -- (0.3125, 0)
				(0.25, \h) -- (0.2390952971190599, \h-\os) circle[radius=\rs] -- (0.3125, \h)
				(0.3125, 0) -- (0.2715377339757441, 0+\os) circle[radius=\rm] -- (0.375, 0)
				(0.3125, \h) -- (0.2715377339757441, \h-\os) circle[radius=\rs] -- (0.375, \h)
				(0.375, 0) -- (0.2999697881019155, 0+\os) circle[radius=\rm] -- (0.4375, 0)
				(0.375, \h) -- (0.2999697881019155, \h-\os) circle[radius=\rs] -- (0.4375, \h)
				(0.4375, 0) -- (0.36962233906779784, 0+\os) circle[radius=\rs] -- (0.5, 0)
				(0.4375, \h) -- (0.36962233906779784, \h-\os) circle[radius=\rm] -- (0.5, \h)
				(0.5, 0) -- (0.43584001290170404, 0+\os) circle[radius=\rm] -- (0.5625, 0)
				(0.5, \h) -- (0.43584001290170404, \h-\os) circle[radius=\rs] -- (0.5625, \h)
				(0.5625, 0) -- (0.5569385485983827, 0+\os) circle[radius=\rs] -- (0.625, 0)
				(0.5625, \h) -- (0.5569385485983827, \h-\os) circle[radius=\rm] -- (0.625, \h)
				(0.625, 0) -- (0.623221577495637, 0+\os) circle[radius=\rs] -- (0.6875, 0)
				(0.625, \h) -- (0.623221577495637, \h-\os) circle[radius=\rm] -- (0.6875, \h)
				(0.6875, 0) -- (0.681888149168618, 0+\os) circle[radius=\rm] -- (0.75, 0)
				(0.6875, \h) -- (0.681888149168618, \h-\os) circle[radius=\rs] -- (0.75, \h)
				(0.75, 0) -- (0.7479988073961981, 0+\os) circle[radius=\rs] -- (0.8125, 0)
				(0.75, \h) -- (0.7479988073961981, \h-\os) circle[radius=\rm] -- (0.8125, \h)
				(0.8125, 0) -- (0.8145579830400664, 0+\os) circle[radius=\rs] -- (0.875, 0)
				(0.8125, \h) -- (0.8145579830400664, \h-\os) circle[radius=\rm] -- (0.875, \h)
				(0.875, 0) -- (0.8732712436640183, 0+\os) circle[radius=\rs] -- (0.9375, 0)
				(0.875, \h) -- (0.8732712436640183, \h-\os) circle[radius=\rm] -- (0.9375, \h)
				(0.9375, 0) -- (0.8972790486294402, 0+\os) circle[radius=\rm] -- (1.0, 0)
				(0.9375, \h) -- (0.8972790486294402, \h-\os) circle[radius=\rs] -- (1.0, \h);
				
				\draw[thin]
				(0.0,\h*1) -- (0.0625,\h*1)
				(0.0625,\h*0) -- (0.125,\h*0)
				(0.125,\h*1) -- (0.1875,\h*1)
				(0.1875,\h*0) -- (0.25,\h*0)
				(0.25,\h*1) -- (0.3125,\h*1)
				(0.3125,\h*1) -- (0.375,\h*1)
				(0.375,\h*1) -- (0.4375,\h*1)
				(0.4375,\h*0) -- (0.5,\h*0)
				(0.5,\h*1) -- (0.5625,\h*1)
				(0.5625,\h*0) -- (0.625,\h*0)
				(0.625,\h*0) -- (0.6875,\h*0)
				(0.6875,\h*1) -- (0.75,\h*1)
				(0.75,\h*0) -- (0.8125,\h*0)
				(0.8125,\h*0) -- (0.875,\h*0)
				(0.875,\h*0) -- (0.9375,\h*0)
				(0.9375,\h*1) -- (1.0,\h*1);
				
				\draw[thick]
				(0.0,\h*0) -- (0.0625,\h*0)
				(0.0625,\h*1) -- (0.125,\h*1)
				(0.125,\h*0) -- (0.1875,\h*0)
				(0.1875,\h*1) -- (0.25,\h*1)
				(0.25,\h*0) -- (0.3125,\h*0)
				(0.3125,\h*0) -- (0.375,\h*0)
				(0.375,\h*0) -- (0.4375,\h*0)
				(0.4375,\h*1) -- (0.5,\h*1)
				(0.5,\h*0) -- (0.5625,\h*0)
				(0.5625,\h*1) -- (0.625,\h*1)
				(0.625,\h*1) -- (0.6875,\h*1)
				(0.6875,\h*0) -- (0.75,\h*0)
				(0.75,\h*1) -- (0.8125,\h*1)
				(0.8125,\h*1) -- (0.875,\h*1)
				(0.875,\h*1) -- (0.9375,\h*1)
				(0.9375,\h*0) -- (1.0,\h*0);
			\end{scope}
			
		\end{tikzpicture}
  \caption{Counterexample for convergence with single blurring. 
    The leftmost panel shows the kernel of the true transfer operator $\widetilde{T}^{\veps}$ w.r.t.~$\mu\otimes\nu$ (it is uniform).
    The second panel shows the kernel of the discrete observed operator $T^N$ w.r.t.~$\mu^N \otimes \nu^N$ (here $\nu^N = \nu$).
    The third panel shows how the kernel changes when the single blur operator is applied. A small amount of the mass that was previously mapped to the top row, is now mapped to the bottom row and vice versa.
    The rightmost panel shows the kernel of the fully assembled operator estimate w.r.t.~$\mu\otimes\nu$.
    The kernel oscillates between $2(1-\varsigma)$ and $2\varsigma$ and therefore does not converge towards the kernel of $\widetilde{T}^{\veps}$ in the $\Leb^2$-norm as $N \to \infty$.
    }
  \label{fig:blurring_counterexample}
\end{figure}

We will now construct the empirical operator.
Let $((x_i, y_i))_{i=1}^N \subset \spt(\pi)$ be i.i.d.~samples from the distribution $\pi$.
Similar to \Cref{def:T_extension}, we extend the single-blurred operator to $\Leb(\mu) \to \Leb(\nu)$ by
\begin{align*}
  \widetilde{T}^{A,\veps}_N 
  &= 
  (T^{\nu}_N)^{\ast} \circ G^{\veps}_{\nu_N\nu_N} \circ T_N \circ T^{\mu}_N.
\end{align*}
Similar to $T^{A,\veps}_N$, this operator is not meant to be constructed numerically, but merely serves as an object for theoretical analysis.
For simplicity assume $\nu_N = \nu$, the example also works in the general case $\nu_N \neq \nu$ but is more tedious.
With this assumption, $T^{\nu}_N$ is the identity and $G^{\veps}_{\nu_N\nu_N} = G^{\veps}_{\nu\nu}$, which we compute next.
By the symmetry of the $\nu$ self-transport problem, the corresponding dual $\bar{\alpha}$ (see \Cref{prop:self_transport}) is constant on $\spt(\nu)$. 
Using this together with the property $\int_{\SpX} \kernel{\nu\nu}{\veps}(y,y') \,\diff \nu(y) = 1$, allows us to compute $\kernel{\nu\nu}{\veps}$ straight from its definition \eqref{eq:def_etk}.
For $y, y' \in \spt(\nu) = \{0,1\}$ we get
\begin{align*}
  \kernel{\nu\nu}{\veps}(y,y')
  &= 
  \begin{cases} 
    2(1-\varsigma) & \tn{if } y=y' \\ 
    2\varsigma & \tn{if } y\neq y'
  \end{cases}
  \quad \tn{for}\quad
  \varsigma=\frac{1}{1+\exp(1/\veps)}.
\end{align*}
Finally, we need to determine $T^{\mu}_N$, which is by definition induced by the optimal unregularized transport plan of $\mu$ to $\mu_N$. Since $\mu$ has a Lebesgue density (it is the Lebesgue measure on $[0,1]$), the transport plan has a density $\kernel{\mu\mu_N}{}$ w.r.t.~$\mu\otimes\mu_N$.
Since we are in one dimension, the unregularized transport problem amounts to sorting the input, i.e.~the $q$-th quantile of one measure is assigned to the $q$-th quantile of the other for all $q \in [0,1]$, see \cite[Chapter 2]{santambrogio2015optimal} for more details.
W.l.o.g.~assume that $x_i$ are sorted in strictly increasing order (in particular there are no duplicates, which holds almost surely).
Then the point $x_i$ is transported to the interval~$\big(\tfrac{i-1}{N}, \tfrac{i}{N}\big)$, i.e.~for $x \in \SpX$ and $x_i \in \spt(\mu_N)$ we have
\begin{align*}
  \kernel{\mu\mu_N}{}(x, x_i)
  &=
  \begin{cases}
    N & \tn{if } x \in \big(\tfrac{i-1}{N}, \tfrac{i}{N}\big), \\
    0 & \tn{otherwise}.
  \end{cases}
\end{align*}
The normalization factor $N$ stems from the fact that integrating $\kernel{\mu\mu_N}{}(\cdot, x_i)$ over the interval $\big(\tfrac{i-1}{N}, \tfrac{i}{N}\big)$ with respect to the restricted Lebesgue measure $\mu$ must yield the density 1, since $\kernel{\mu\mu_N}{} \cdot \mu\otimes\mu_N$ is a transport plan.
Putting everything together, we get that $\widetilde{T}^{A,\veps}_N$ has the integration kernel
\begin{align*}
  \tilde{t}^{A,\veps}_N(x,y)
  &=
  \int_{\SpX^2} \kernel{\nu\nu}{\veps}(y,y') \kernel{\mu\mu_N}{}(x,x') \,\diff\pi_N(x',y')
  \\&=
  \begin{cases}
    2(1 - \varsigma) 
    & \tn{if } y = y_i \tn{ where } i \tn{ is uniquely defined by } x \in \big(\tfrac{i-1}{N}, \tfrac{i}{N}\big)
    \\
    2 \varsigma 
    & \tn{otherwise}.
  \end{cases}
\end{align*}
A visualization of $\tilde{t}^{\veps}$ and $\tilde{t}^{A,\veps}_N$ is depicted in \Cref{fig:blurring_counterexample}.
From here it is easy to see that $\big\|{\tilde{t}^{\veps} - \tilde{t}^{A,\veps}_N}\big\|_{\Leb^2(\mu\otimes\nu)}$ does not converge to $0$ as $N \to \infty$, indeed the norm does not even depend on $N$.

The interpretation as to why convergence fails in this case is that with single blurring, in order to estimate each $\nu$-slice $\tilde{t}^{A,\veps}_N(x,\cdot)$, we only use a single sample. 
This is sufficient if the transfer operator is deterministic (as shown in \cite{EntropicTransfer22}), since there is only a single value to approximate.
For probabilistic transfer operators however, this example shows that single blurring does not suffice.
With double blurring all samples in the proximity of $x$ contribute to the approximation $t^{A,\veps}_N(x,\cdot)$ of $t^{\veps}(x,\cdot)$, which allows for convergence in a much more general setting, as shown in \Cref{subsec:quant_cvg}.

\subsection{Numerical workflow and algorithms} \label{subsec:workflow}
For numerical data analysis on dynamical systems the objects of interest are the finite-dimensional operator $T^{\veps}_N$ on the discrete data, and its kernel $t^{\veps}_N$, which can be evaluated on the whole domain.
In this section we outline the corresponding steps and a typical workflow. Some tutorial code and code to reproduce the figures in this article can be found online.\footnote{\url{https://github.com/OTGroupGoe/StochasticETO}}
For simplicity, we consider the stationary setting of Section \ref{subsec:stationary}. Adaptations to the non-staionary setting are straight-forward.

\paragraph{Matrix representations of operators and transport plans.}
Assume that we are given samples $(x_i,y_i)_{i=1}^N$ from $\pi$ on $\SpX \times \SpX$. For now, assume that all $(x_i)_i$ and $(y_i)_i$ are distinct. This holds almost surely if $\pi$ has no atoms and \Cref{rem:duplicates} explains why we may ignore duplicate points even when they occur.

Our goal is to study $T^{\veps}_N$ on $L^2(\mu_N) \to L^2(\mu_N)$ numerically.
For this we equip $L^2(\mu_N)$ with the canonical orthonormal basis given by functions $(\ones_{x_i})_i$ where
$$\ones_{x_i}(x_j) \assign \begin{cases} \sqrt{N} & \tn{if } x_j=x_i, \\ 0 & \tn{otherwise,} \end{cases}$$
and analogously we equip $L^2(\nu_N)$ with the basis $(\ones_{y_i})_i$. For these bases one then has
$$T_N \ones_{x_i} = \ones_{y_i}$$
and therefore the matrix representation $\bm{T}_N$ of $T_N$ in these bases is simply the $N \times N$ identity matrix.

Transport plans between $\mu_N$ and itself (and analogously for $\nu_N$ and itself, or $\mu_N$ and $\nu_N$) can be represented by non-negative matrices $\bm{\pi} \in \R^{N \times N}$ where each row and column of $\bm{\pi}$ sums to $1/N$. The matrix $\bm{\pi}$ corresponding to the optimal entropic plan in \eqref{eq:def_entrOT_primal} has the form given by \eqref{eq:EntropicPD},
$$\bm{\pi}_{i,j}=\kernel{\mu_N\mu_N}{\veps}(x_j,x_i)/N^2 \qquad \tn{where} \qquad \kernel{\mu_N\mu_N}{\veps}(x_j,x_i)=\exp([\alpha(x_j)+\beta(x_i)-c(x_j,x_i)]/\veps)$$
and the factor $1/N^2$ accounts for the masses that $\mu_N$ assigns to the points $x_i$ and $x_j$.
Note that we choose here the convention that the first (row) index of $\bm{\pi}$ corresponds to the output, the second (column) index to the input space, as is standard for matrices, whereas for integration kernels throughout the paper we have adopted the convention that the first argument corresponds to the input and the second argument to the output space.
$\bm{\pi}$ can be obtained efficiently with the Sinkhorn algorithm (see \cite{peyre2019computational} and references therein). Given this matrix $\bm{\pi}$, the matrix representation $\bm{G}_{\mu_N\mu_N}^\veps$ of the operator $G_{\mu_N\mu_N}^\veps$ in the basis $(\ones_{x_i})_i$ is then given by $\bm{G}_{\mu_N\mu_N}^\veps=N \cdot \bm{\pi}$, such that each row and column sums to 1 (and analogously for $G_{\nu_N\nu_N}^\veps$ and $G_{\nu_N\mu_N}^\veps$).

We can therefore obtain a matrix representation of $T_N^\veps=G_{\nu_N\mu_N}^\veps T_N G_{\mu_N\mu_N}^\veps$ by solving two entropic optimal transport problems and then multiplying the two matrices $\bm{T}^\veps_N=\bm{G}_{\nu_N\mu_N}^\veps\,\bm{G}_{\mu_N\mu_N}^\veps$ (of course we may skip the identity matrix $\bm{T}_N$). 
For very large $N$ it would be computationally costly to calculate $\bm{T}^{\veps}_N$ explicitly as a dense matrix, instead one can define it as an abstract linear operator that multiplies by the two blur operators in succession (e.g.~using \texttt{scipy.}\allowbreak{}\texttt{sparse.}\allowbreak{}\texttt{linalg.}\allowbreak{}\texttt{LinearOperator} as in interface).
To save on memory, one may additionally use an abstract representation of $\bm{G}^{\veps}_{\cdot\cdot}$, see also the paragraph on large-scale computations in Section \ref{sec:convection}.

In this fashion, we are now able to construct a numerical representation of $T^\veps_N$ and subsequently extract its dominant eigenpairs or singular values and vectors.

\begin{remark}
\label{rem:duplicates}
If two points $x_i$ and $x_j$, $i\neq j$ are identical, one can merge them into a single point with increased weight in the vector representation of $\mu_N$ and adopt the corresponding basis vector $\ones_{x_i}(x_k)=\sqrt{N/2}$ for $x_k=x_i=x_j$ and zero otherwise. Alternatively, it is possible to ignore this collision and to simply keep both copies $x_i$ and $x_j$: Since the transport cost function $c(x_i,y)=c(x_j,y)$ will be equal for all $y$, by virtue of the entropic regularization, the rows (or columns, depending on convention) in the optimal entropic transport matrix $\pi$ corresponding to $x_i$ and $x_j$ will also be equal. Consequently, all eigen or singular vectors of the matrix representation of $T^{N,\veps}$ will be equal in the rows corresponding to $x_i$ and $x_j$. The increased weight of this point is accounted for by the fact that this row appears twice in the matrix representation. In the same way additional duplicates or duplicates in $(y_i)_i$ may be ignored.
\end{remark}

\paragraph{Sweeping analysis of spectrum.}
When studying a new dynamical system, as a starting point we recommend to compute and visualize the dominant part of the spectrum of $T^\veps_N$ over a range of different $\veps$, such as in Figure \ref{fig:embedding_nonrot} and in \cite{EntropicTransfer22}. It is advisable to start with large $\veps$ and then to decrease $\veps$ gradually to speed up calculation by virtue of $\veps$-scaling techniques, as described in \cite{SchmitzerScaling2019}.
For `very large' $\veps$, $T_N^\veps$ is typically oversmoothed and all eigenvalues except for the one corresponding to the stationary density are close to $0$. For `very small' $\veps$, one typically finds many eigenvalues with absolute value close to $1$, indicative of discretization artefacts. The range where $\veps$ is `very large' depends on the length scales of the system $T$, the regime of `very small' $\veps$ additionally depends on the number $N$ of available data points, and the complexity and (intrinsic) dimensionality of $T$, $\mu$, and $\nu$.
The results of Section \ref{subsec:quant_cvg} provide some guidance for this relation and it is illustrated by the examples below and those in \cite{EntropicTransfer22}. Part of the motivation for this sweeping analysis is to find out where these regimes lie for the given system.

If the original system $T$ exhibits a spectral gap (by which we mean a gap between the absolute values of any two adjacent eigenvalues in the ordered spectrum) due to a time scale separation (see for instance \cite{BittracherTransitionManifolds2018} and references therein) then this spectral gap will also be visible in $T^\veps_N$ for intermediate $\veps$ (if $N$ is sufficiently high).
Such a gap is clearly visible in Figure \ref{fig:embedding_nonrot} (see also \cite[Figures 3, 4, 5, and 7]{EntropicTransfer22}).

\paragraph{Spectral embedding.}
When an intermediate $\veps$ with a spectral gap has been identified, one can use spectral embedding \cite{Coifman2006} to visualize the samples. For instance, sample $x_i$ may be represented by the tuple $(u_k(x_i))_{k \in I}$ where $(u_k)_k$ denotes the eigenfunctions of $T^{\veps}_N$ and $I$ is some index set. We assume that eigenfunctions are enumerated by decreasing absolute value of the eigenvalues. A typical choice is $I=\{2,\ldots,K\}$, i.e.~we skip the trivial constant eigenfunction for eigenvalue $\lambda_1=1$, corresponding to the uniform stationary density, and go up to the $K$-th one. For visualization in 2 or 3 dimensions, one may experiment with different choices of indices (cf.~\cite{Koltai_2020} for some examples). If $\bm{u}_k$ is the $k$-th eigenvector of the matrix representation $\bm{T}^\veps_N$, then $u_k(x_i)=\sqrt{N} \cdot (\bm{u}_k)_i$, where the latter denotes the $i$-th entry of the vector $\bm{u}_k$. The factor $\sqrt{N}$ accounts for the discrepancy of the naive Euclidean inner product on $\R^N$ (or $\C^N$) and the one in $L^2(\mu_N)$ where each point is weighted with a factor $1/N$, since one has $u(x)=\sum_{i=1}^N \ones_{x_i}(x) \cdot \bm{u}_i$ (see above for the definition of the basis functions $\ones_{x_i}$).

\paragraph{Out-of-sample extension.}
As discussed, the kernel $t_N^\veps$ of $T_N^\veps$ can be extended beyond $((x_i,y_j))_{i,j}$ via the regularity properties of entropic dual transport potentials.
For $(x,y) \in \SpX$ we have by \Cref{def:op_stationary}
$$t_N^\veps(x,y) = (\kernel{\mu_N\mu_N}{\veps}:\pi_N:\kernel{\nu_N\mu_N}{\veps})(x,y)
=\sum_{i=1}^N \kernel{\mu_N\mu_N}{\veps}(x,x_i)\, \kernel{\nu_N\mu_N}{\veps}(y_i,y).$$
To evaluate $\kernel{\mu_N\mu_N}{\veps}(x,x_i)=\exp([\alpha(x)+\beta(x_i)-c(x,x_i)]/\veps)$ for $x \notin \{x_j\}_j$ one first computes $\alpha(x)$ via \eqref{eq:Sinkhorn},
$$\alpha(x)=-\veps \log\left( \frac1N \sum_{j=1}^N \exp([\beta(x_j)-c(x,x_j)]/\veps) \right).$$
For $\kernel{\nu_N\mu_N}{\veps}$ one proceeds analogously.
For evaluating $T^\veps_N u$ at some point $x \in \SpX$ one uses
\begin{align*}
    (T^\veps_Nu)(x) & = (G^\veps_{\nu_N\mu_N} T_N G^\veps_{\mu_N \mu_N} u)(x)
    = \int_{\SpX} k_{\nu_N \mu_N}^\veps(z,x) (T_N G^\veps_{\mu_N \mu_N} u)(z) \diff \nu_N(z) \\
    & = \frac1N \sum_{i,j=1}^N k_{\nu_N \mu_N}^\veps(y_i,x) (\bm{G}^\veps_{\mu_N \mu_N})_{ij} \bm{u}_j
\end{align*}
where $\bm{u}$ is again the vector representation of the function $u \in L^2(\mu_N)$ with $u(x_j)=\sqrt{N}\cdot \bm{u}_j$.

\subsection{Stochastic shift on torus} \label{subsec:num_1d}
\begin{figure}[bt]
  \centering
  \includegraphics[width = 16cm]{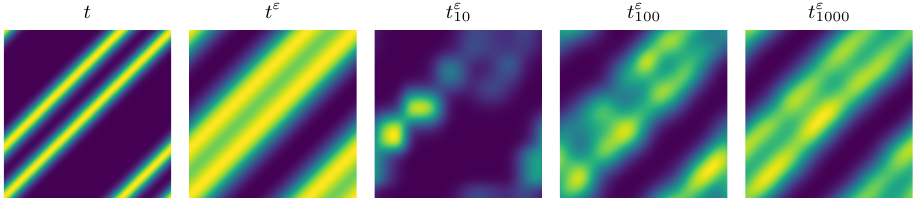}
  \caption{Integral kernels $t$, $t^\veps$ and $t_N^\veps$ for the system \eqref{eq:torus_pi} for $\sigma = 0.05$, $\veps = 0.01$, and various $N$. Yellow indicates high values, dark blue indicates zero; color scales are adjusted to each panel separately for better visibility.}
  \label{fig:general_plot}
\end{figure}

\paragraph{Problem description.}
Similar as in \cite[Section 6.1]{EntropicTransfer22} and \cite[Section 6.1]{beier2024transfer} we use the 1-torus as a transparent toy example to illustrate key properties of stochastic entropic transfer operators.
We focus here on the analysis of the kernel $t^\veps_N$ (see \eqref{eq:TEpsKernelN}) as a function on $\SpX \times \SpX$ and its convergence toward $t^\veps$, as captured by \Cref{prop:prob_bias_bound,prop:prob_var_bound}.
The extension $t^{A,\veps}_N$ (see \eqref{eq:tNAeps}) is obtained from $t^\veps_N$ by an additional piecewise constant approximation step. While this is important to study spectral convergence of $T_N^{A,\veps}$, we ignore this additional step here for simplicity.

Let $\SpX \assign \mathbb{R}/\mathbb{Z}$ be the 1-torus, and $\pi \in \mathcal{P}(\SpX\times \SpX)$ be such that
\begin{align} \label{eq:torus_pi}
  (X,Y) \sim \pi \quad \Leftrightarrow \quad
  \begin{cases}
    X &\sim \mu \assign \mathcal{U}(\SpX)\\
    Y|X=x &\sim \frac{1}{2} \, \widetilde{\mathcal{N}}(x,\sigma^2) + \frac{1}{2} \, \widetilde{\mathcal{N}}(x+0.3,\sigma^2)
  \end{cases}
\end{align}
where $\mathcal{U}(\SpX)$ denotes the uniform distribution and $\widetilde{\mathcal{N}}(m,\sigma^2)$ denotes the wrapped Gaussian distribution with mean $m$ and standard deviation $\sigma$, concretely, for the canonical projection $f : \R \to \R/\Z$ we have $\widetilde{\mathcal{N}}(m,\sigma^2)=f_\# \mathcal{N}(m,\sigma^2)$. By symmetry the marginal distribution $\nu$ of $Y$ is also uniform, and the self-transport potential \eqref{eq:self_Sinkhorn} for $\kernel{\mu\mu}{\veps}=\kernel{\nu\nu}{\veps}$ is constant.
The kernels $t$, $t^\veps$ and $t^\veps_N$ are illustrated in \Cref{fig:general_plot}.

Figure \ref{fig:line_plot} shows $L^2(\mu\otimes\nu)$ distances between $t$, $t^\veps$, and $t^\veps_N$ for various parameters. The distance between $t$ and $t^\veps$ is calculated via discretisation on a regular grid, the others are approximated via Monte-Carlo integration. Plots involving the empirical $t_N^\veps$ show averages over 100 simulations.
We discuss the observations below.

\begin{figure}[bt]
  \centering
  \includegraphics[width = \textwidth]{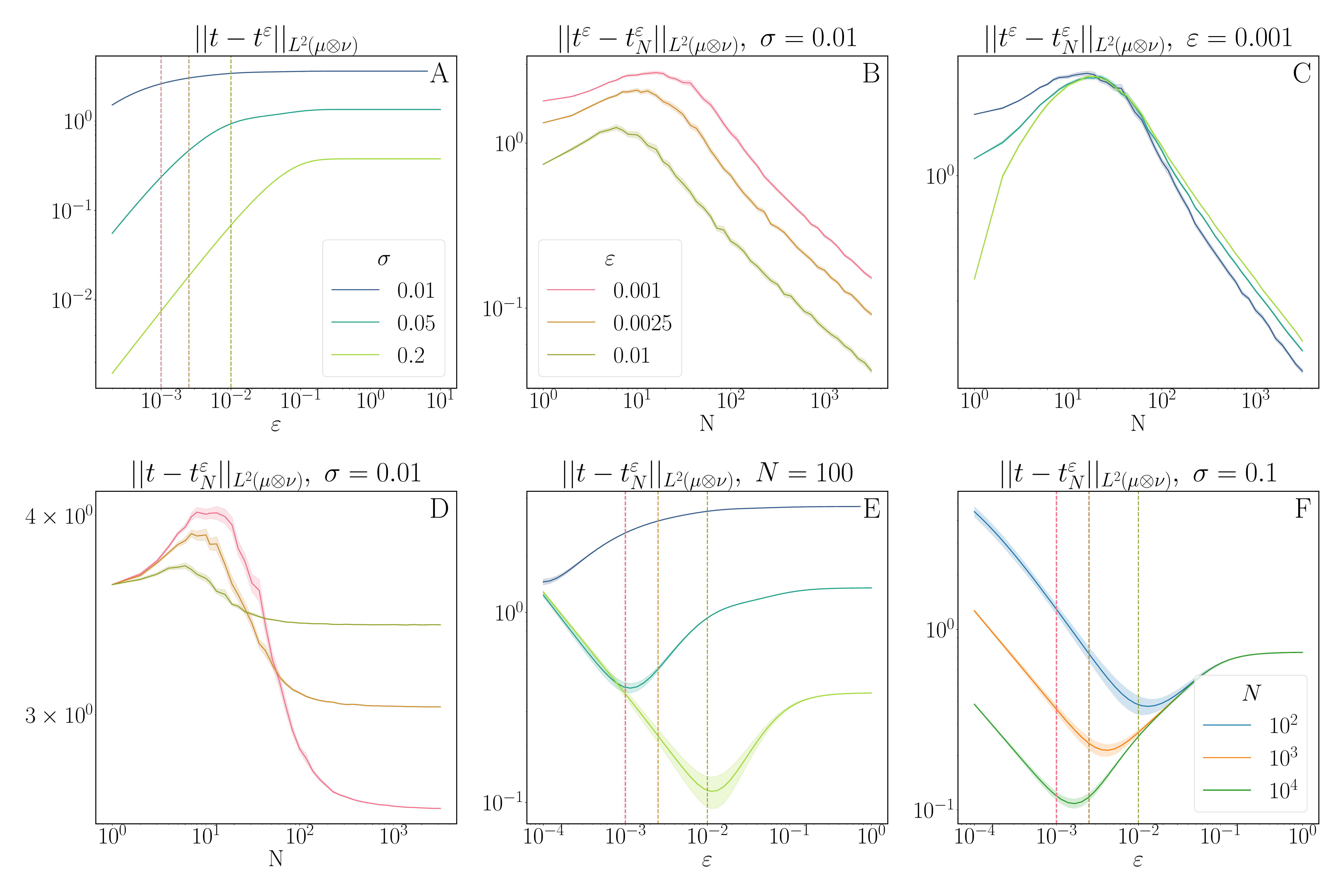}
  \caption{$L^2$ distances between different integral kernels and parameters for the system \eqref{eq:torus_pi}. Colors for encoding $\sigma$ and $\veps$ are consistent in all panels. Vertical dashed lines indicate values of $\veps$ used in other panels. Plots that involve empirical data show the estimated mean with $95\%$ confidence interval (based on $100$ simulations), all y-axes are in log scale.}
  \label{fig:line_plot}
\end{figure}

\paragraph{$\norm{t-t^\veps}_{L^2(\mu\otimes\nu)}$ for varying $\veps$.}
$t^{\veps}_N$ provides an empirical approximation of the regularized $t^\veps$, not of $t$ itself. Hence it is important to understand the difference between $t$ and $t^\veps$, which can be interpreted as the \emph{bias} introduced by convolution with the self-transport kernels. This is shown in Figure \ref{fig:line_plot}A.
Note that the bias is higher when $\pi$ is more concentrated (i.e.~it increases as $\sigma$ decreases).
As predicted by \Cref{prop:Teps2T} the discrepancy vanishes as $\veps \to 0$. Intuitively, this is due to the fact that $G_{\mu\mu}^\veps$ converges to the identity operator as $\veps \to 0$.
On the other hand, as $\veps \to \infty$, the optimal entropic self-transport plans approach the product measures, and consequently $t^\veps$ converges to the constant function $1$.
This is reflected by the plateaus in the plot, which lie at values $\norm{t-1}_{L^2(\mu\otimes\nu)}$.

\paragraph{$\norm{t^\veps - t^\veps_N}_{L^2(\mu\otimes\nu)}$ for varying $N$ and different $\veps$.}
Figure \ref{fig:line_plot}B shows the discrepancy between the empirical $t^{\veps}_N$ and the regularized $t^\veps$, which is related to the \emph{variance} of our estimator. We expect the variance to converge to 0 as $N\to\infty$ approximately with rate $O(1/\sqrt{N})$ by \Cref{prop:prob_bias_bound,prop:prob_var_bound}.
For small $N$, all three lines first increase. For $N=1$ one finds that $t_1^\veps$ is constant and equal to 1, for small $N>1$, $t_N^\veps$ first becomes `spiky' (cf.~Figure \ref{fig:general_plot}) (which has a higher $L^2$ distance to $t^\veps$ than the uniform $t_1^\veps$). Eventually $N$ is sufficiently high to cover the region where $t^\veps$ is substantially non-zero with small blobs on the length scale $\sqrt{\veps}$ and the error starts to decrease.
Therefore, this trend reversal takes longer as $\veps$ decreases (in analogy to a kernel density estimator).
Generally, the variance decreases as the regularization $\veps$ increases.

\paragraph{$\norm{t^\veps - t^\veps_N}_{L^2(\mu\otimes\nu)}$ for varying $N$ and different $\sigma$.}
Figure \ref{fig:line_plot}C is similar to Figure \ref{fig:line_plot}B, but shows different $\sigma$ instead.
The reason for the non-monotonicity is as before. Note that for small $N$ the error is larger for small $\sigma$ (since the true distribution is more concentrated and thus further from the kernel which is constant $1$), whereas for large $N$ this behaviour is reversed (since for small $\sigma$, $t^\veps$ is substantially non-zero only on a smaller region of $\SpX \times \SpX$).

\paragraph{$\norm{t - t^\veps_N}_{L^2(\mu\otimes\nu)}$ for varying $N$, $\veps$, and $\sigma$.}
Figure \ref{fig:line_plot}D-F illustrate the combination of bias and variance in the discrepancy between the empirical $t^\veps_N$ and the true $t$ and the resulting bias-variance trade-off.
In Figure \ref{fig:line_plot}D, with increasing $N$ the error decreases earlier for large $\veps$ (small variance) but then plateaus at a higher value (high bias), whereas for small $\veps$ it takes longer to decrease (high variance) but ultimately reaches a lower level (small bias).
In Figure \ref{fig:line_plot}E, with increasing $\veps$ the error first decreases (decreasing variance), and ultimately increases (increasing bias). For large $\sigma$, the decrease lasts longer, since the bias is lower (see Figure \ref{fig:line_plot}A).
Likewise, in Figure \ref{fig:line_plot}F, the error first decreases and then increases with increasing $\veps$. The decrease reaches the lowest level for large $N$, since the variance is lower.

\paragraph{Out-of-sample extension of eigenfunctions.}
\begin{figure}[bt]
  \centering
  \includegraphics[width = 16cm]{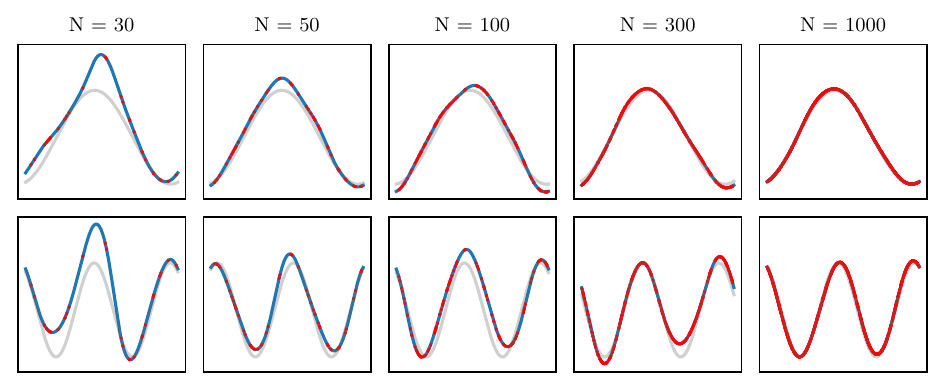}
  \caption{The second and fourth dominant eigenfunctions (real part)for the system \eqref{eq:torus_pi_simple} for $\sigma = 0.01$, $\veps = 0.01$ on $(x_i)_i$ (red points), out-of-sample extension (blue line), and true eigenfunctions (grey line, aligned over the ambiguous phase shift).}
  \label{fig:out_of_sample}
\end{figure}
Figure \ref{fig:out_of_sample} shows the real parts of the dominant non-trivial eigenfunctions of $T^\veps_N$ and their out-of-sample extension via \eqref{eq:out_of_sample}, indicating that the discrete eigenfunctions converge to the limiting eigenfunctions and that the extension yields a meaningful continuous interpolation.
Note that here we use a simplified model instead of \cref{eq:torus_pi} for better understanding:
\begin{align}
\label{eq:torus_pi_simple}
    X \sim \mathcal{U}(\mathcal{X}), \qquad Y|X=x\sim \widetilde{\mathcal{N}}(x,\sigma^2)
\end{align}
By arguments similar to \cite[Proposition 3]{EntropicTransfer22} one can prove that the eigenfunction of $T^\varepsilon$ are Fourier modes, which is consistent with our simulations, as can be seen from Figure \ref{fig:out_of_sample}.
We will also use this method in Section \ref{sec:convection}.

\paragraph{Higher dimensions.}
\begin{figure}[bt]
  \centering
  \includegraphics[width = 16cm]{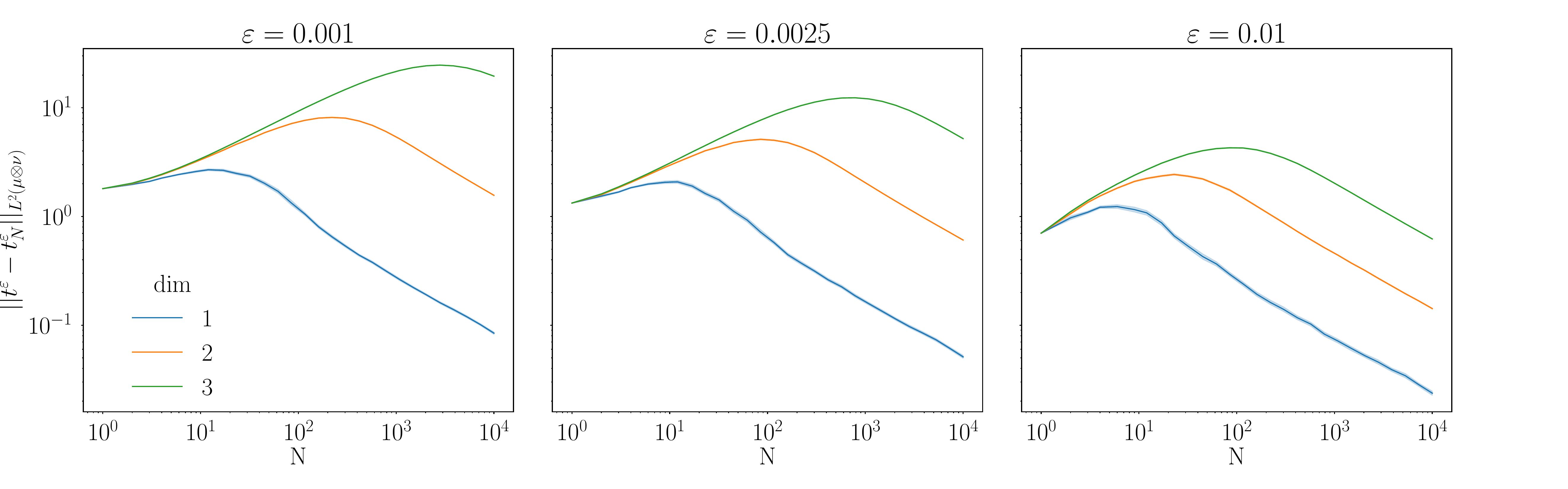}
  \caption{Variance $\norm{t^\veps - t^\veps_N}_{L^2(\mu\otimes\nu)}$ for the system \eqref{eq:torus_pi} for different dimensions $m$ and regularization $\veps$. Plots show estimated mean with with $95\%$ confidence interval. $\sigma = 0.01$.}
  \label{fig:dim_lineplot}
\end{figure}
Now let $\SpX \assign \mathbb{R}^d/\mathbb{Z}^d$ and $\tilde{\pi} \in \prob(\mathbb{R}/\mathbb{Z} \times \mathbb{R}/\mathbb{Z})$ be as in \eqref{eq:torus_pi}. Set $\pi \in \prob(\SpX \times \SpX)$ to
\begin{align*}
  \pi = \tilde{\pi}\otimes\Big(\mathcal{U}(\mathbb{R}/\mathbb{Z} \times \mathbb{R}/\mathbb{Z})\Big)^{\otimes (d-1)},
\end{align*}
i.e.~for a random variable pair $(X,Y)$ with joint law $\pi$, the first components follow the `shift and blur' pattern of \eqref{eq:torus_pi} and the other dimensions are simply uniformly distributed. Figure \ref{fig:dim_lineplot} shows the $L^2$ distance between $t^\veps$ and $t^N_\veps$ for varying $N$ and different dimension $d$. In accordance with \Cref{prop:prob_var_bound,prop:prob_bias_bound} the distance decreases with rate $O(N^{-1/2})$, but the constant increases with $d$.

\subsection{Comparison with Ulam's method} \label{sec:Ulam}
In this subsection we provide a comparison between entropic transfer operators and Ulam's method. For this we consider a shift on the torus, embedded into a higher-dimensional ambient space, with additional noise. While initially a simple system, with increasing dimensionality and noise level it becomes more and more difficult to extract its dynamic structure from data.
\paragraph{System description.}
Let $\mathcal{X} \assign \mathbb{R}/\mathbb{Z}$, and $\pi\in \mathcal{P}(\mathcal{X}\times\mathcal{X})$ such that
\begin{align*}
    (\tilde{X},\tilde{Y}) \sim \pi \quad \Leftrightarrow \quad
    \begin{cases}
        \tilde X &\sim \mathcal{U}(\SpX)\\
        \tilde Y| \tilde X &= \, \tilde X + \frac{1}{5}.
    \end{cases}
\end{align*}
Let $\emb : \mathcal{X} \to \mathbb{R}^2 ,\  \tilde x \mapsto (\cos{(2\pi\tilde x)} , \sin (2\pi\tilde x))$ be the embedding of the one-torus into $\mathbb{R}^2$. For $d \in \N$, $d\geq 2$, let $f_d : \mathbb{R}^2 \to \mathbb{R}^d ,\ (x_1,x_2) \mapsto (x_1,x_2,0,...,0)$ be the canonical embedding of $\mathbb{R}^2$ in $\mathbb{R}^d$. 
Sample $A_{n,k}, B_{n,k} \sim \mathcal{N}(0,1)$ for $k \in \llbracket 1,10\rrbracket$ and $n \in \llbracket 1, d\rrbracket$, denote $\mathcal{F}_{n}(x) := \sum_{k=1}^{10}\frac{A_{n,k}}{k}\cos(2\pi k x) + \frac{B_{n,k}}{k}\sin (2\pi k x)$. I.e.~$\mathcal{F}_n$ are randomly weighted combinations of the first $10$ Fourier modes. Additionally by $R \in \text{SO}(d)$ denote a (uniform) random rotation operator, and let $\tau = 0.2$ be an arbitrarily chosen damping parameter. 
Then
\begin{align*}
    \Emb & : \mathcal{X}\to\mathbb{R}^d, & \tilde{X} & \mapsto R\Big( f_d\circ \emb(\tilde{X}) + (\tau^2 \mathcal{F}_1(\tilde{X}), \tau^2 \mathcal{F}_2(\tilde{X}), \tau \mathcal{F}_3(\tilde{X}),..., \tau \mathcal{F}_d(\tilde{X}))\Big)
\end{align*}
maps the torus to a rotated, distorted circle in $\mathbb{R}^d$. Note that it has less distortion in the two first dimensions. See \Cref{fig:3d_ring} for an example when $d = 3$.
\begin{figure}[bt]
    \centering
	\includegraphics[width = 6cm]{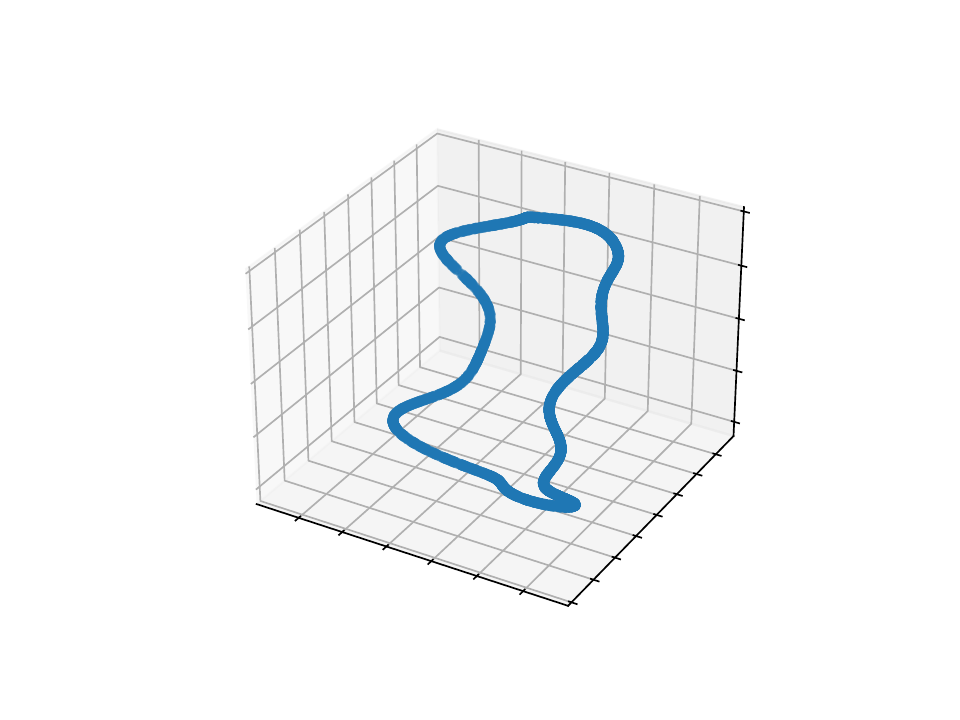}
    \caption{Image of operator Emb when $d = 3$}
    \label{fig:3d_ring}
\end{figure}
Finally we add some random normal noise. For $Z, Z' \sim \mathcal{N}(0, I_d)$ and $\sigma > 0$, define
\begin{align*}
    (X,Y) \assign \left(\Emb(\tilde{X}) + \sigma Z, \Emb(\tilde{Y}) + \sigma Z'\right) \in \mathbb{R}^d.
\end{align*}
We take the joint law of these two random variables as the joint law $\pi$ of our dynamical system.
The blur is implemented differently as in Section \ref{subsec:num_1d} and only applied after the embedding, so that the dimension of the support of the law of $(X,Y)$ is $2d$, making the system more challenging to analyze, especially as $d$ and $\sigma$ are increased.

We expect that the spectrum of the resulting system has eigenvalues approximately at angles $2\pi \cdot k/5$, $k \in \Z$, according to the shift from $\tilde{X}$ to $\tilde{Y}$, with eigenfunctions being approximately given by Fourier modes along the ring. Due to the noise, the eigenvalues for modes with higher frequency will have a damped amplitude.

\paragraph{Results.}
In \Cref{fig:Ulam_ETO} we show simulation results for the estimated eigenvalues by Ulam's method and entropic transfer operators for varying parameters:
\begin{itemize}
    \item For entropic transfer operators we choose $10$ values for the entropic regularization constant $\veps$ equally spaced from $0.01$ to $0.1$. For Ulam's method we discretize $\R^d$ by equal-sized hyper cubes with side length $2\sqrt{\veps}$, such that the spatial resolution of both methods is roughly comparable.
    \item We choose the noise parameter $\sigma \in \{0.05, 0.1, 0.2, 0.4\}$.
    \item We choose the dimension $d \in \{2, 10\}$, note that $d = 2$ means we only have the main dimensions. 
    \item We calculate the first $10$ leading eigenvalues.
\end{itemize}
\begin{figure}[bt]
    \centering
	\includegraphics[width = 10cm]{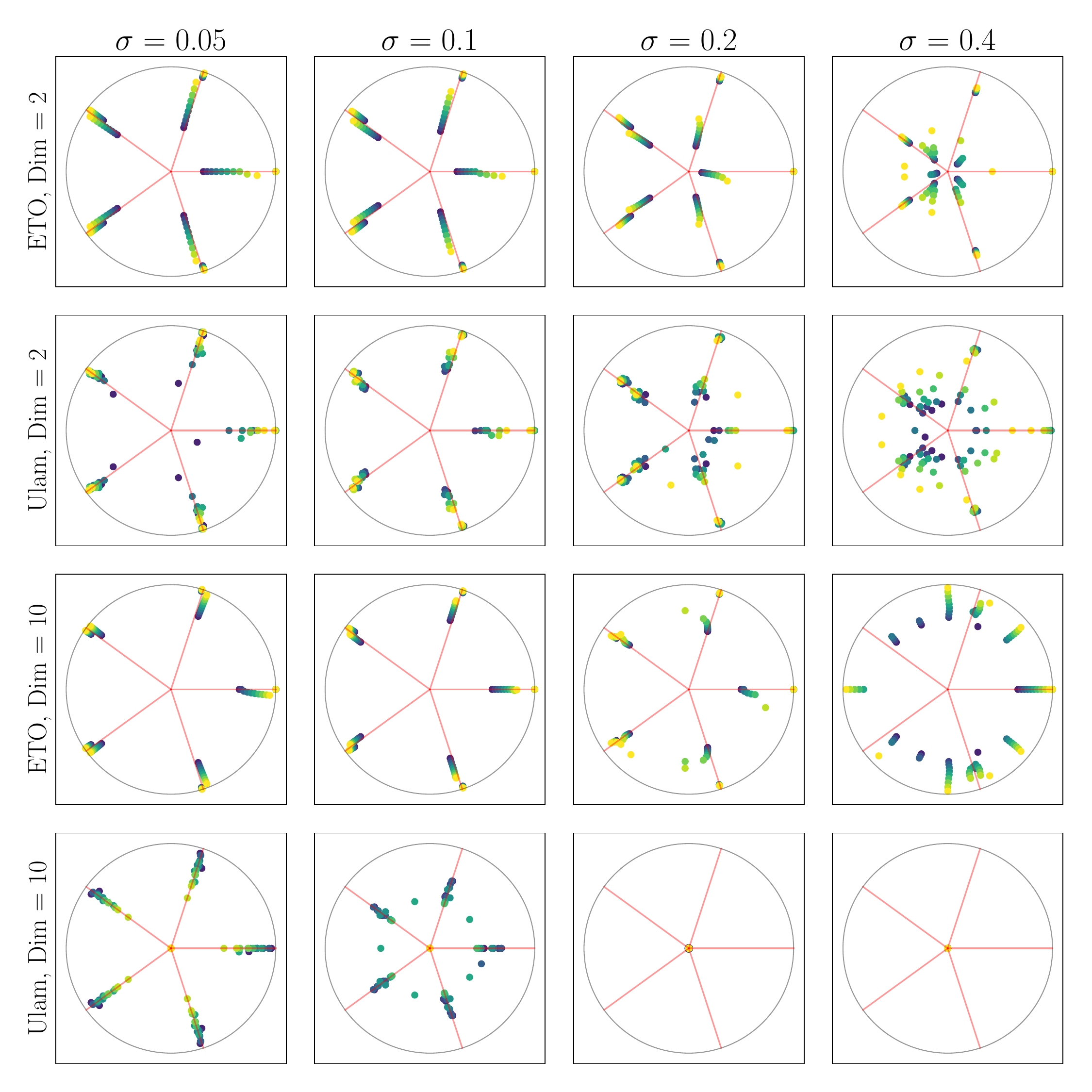}
    \caption{First 10 leading eigenvalues in the example of Section \ref{sec:Ulam}, estimated by Ulam's method and entropic transfer operators, for varying $\veps$ (encoded by color, dark blue is larger), noise level $\sigma$, and ambient dimension $d$. $N = 500$. See text for more details.}
    \label{fig:Ulam_ETO}
\end{figure}
For $d = 2$ and small noise $\sigma$ both approaches work well. For the entropic transfer operator, the spectrum depends more smoothly on $\veps$, the separation between the first 5 and the second 5 eigenvalues is cleaner, and the spectrum is still clean at $\sigma=0.2$ where Ulam's method already exhibits a few artifacts.
For $d = 10$, entropic transfer operators still work well up to including $\sigma=0.2$, whereas Ulam's method already produces a corrupted spectrum at $\sigma=0.05$ (which completely collapses to $0$ for $\sigma=0.2,~0.4$). In conclusion, the mesh-free regularization of entropic transfer operators, compared to the binning in Ulam's method, introduces less artefacts and is more robust in higher dimensions.

\subsection{Rayleigh-B\'enard convection data} \label{sec:convection}

\paragraph{Description of the dataset.}
In this section we consider an example from fluid dynamics and analyze a dataset of a turbulent Rayleigh--B\'enard convection that was previously studied in \cite{Koltai_2020} (see \cite{weiss2011DataStationary,weiss2011DataRotating,weiss2011Aspect} for more details on the experiment, the data, and the physical motivation, concretely, we consider run 1003261, as described in \cite{weiss2011DataStationary}).
The experimental setup consists of a cylinder filled with water that is heated at the bottom and cooled at the top. The aspect ratio between the diameter of the cylinder $D$ and its height $L$ was $D/L=0.5$. The average water temperature is $40$\textdegree{}C, the temperature difference between top and bottom plate is $19.9$\textdegree{}C.
The fluid temperature is measured with $24$ thermistors that are embedded into the cylinder wall in three layers at heights $L/4$, $L/2$, and $3L/4$, $8$ per layer, with evenly distributed angles along the cylinder circumference. These 24 measurements give a rough characterization of the fluid temperature profile.
Measurements were recorded approximately once every $3.4$s for a total duration of approximately $12$ days. In total $300\,362$ sets of $24$ measurements have been recorded.
Data from the first one or two hours is discarded to be sure that only such data points were considered in the analysis where the system was in a statistical equilibrium.

Now $\SpX$ is (a compact subset) of $\R^{24}$, a single state $x \in \SpX$ is given by $x=(t_{l,k})_{l \in \{\tn{b,m,t}\}, k \in \{0,\ldots,7\}}$
where $t_{l,k}$ denotes the temperature measurement (in degrees Celsius) in layer $l$ (the letters stand for bottom, middle, and top) at the azimuthal position $\theta_k \assign 2\pi \cdot k/8$.
In the selected regime of physical parameters large-scale circulation rolls form to transport heat through the cylinder and typical configurations are either a single large roll state (SRS) or a `double roll' state (DRS).
The experimental setup and the single rolls are sketched in \cite[Figure 1]{Koltai_2020}.

A rough physical summary of the system state is given by specifying whether the system is in a SRS or a DRS and by the roll orientation. This can be approximately extracted from a measurement $(t_{l,k})_{l \in \{\tn{b,m,t}\}, k \in \{0,\ldots,7\}}$ as follows (see \cite{weiss2011DataStationary} for more details): first, a cosine curve is fitted to the temperatures in each layer, i.e.~a least squares regression problem is solved to approximate
\begin{equation} \label{eq:convection_fit}
  t_{l,k} \approx \frac{1}{8}\sum_{k'=0}^7 t_{l,k'} + A_l \cos\left(\theta_k-\psi_l \right)
\end{equation}
for each layer $l \in \{\tn{b,m,t}\}$, where $A_l$ and $\psi_l$ are the amplitude and phase of the cosine profile respectively.
Examples of a temperature measurement and corresponding fitted curves are shown in \cite[Figure 1]{Koltai_2020}.
In the SRS, the three phases $\psi_l$ are expected to be similar, in the DRS the phase difference between bottom and top layers should approximately be $\pi$. For simplicity, we will assume that some $x \in \SpX$ is in SRS if the absolute phase difference between top and bottom is less than $\pi/2$ (up to multiples of $2\pi$) and in DRS otherwise.
Of course this will misclassify some states, including such that are neither in SRS nor DRS. More sophisticated classification rules are discussed in \cite{weiss2011DataStationary}. However, for the purpose of demonstrating that the subsequent transfer operator analysis is consistent with the physical interpretation of states, the above simplified rule is sufficient.

Let $(z_t)_{t=1}^T \subset \SpX$ be the sequence of measured states. We extract from this a collection of observed transitions $((x_i,y_i))_{i=1}^N$ by setting
\begin{equation} \label{eq:convection_xy_extraction}
  x_i \assign z_{t_0+s \cdot i} \quad \tn{and} \quad y_i\assign z_{t_0+s \cdot i+l}
\end{equation}
for admissible values of $i$. Here $t_0 \in \N$ can be used to discard initial measurements, before the system has reached statistical equilibrium. The value $s \in \N$ is a sub-sampling parameter (stride) which can be chosen $>1$ to reduce computational load, and to reduce the dependency between the considered samples (however the latter is not necessary as we expect the system to be ergodic). Finally, $l \in \N$ denotes the time lag between the entries of each pair $(x_i,y_i)$ and setting $l>1$ effectively corresponds to studying the $l$-th power of the transfer operator relative to the setting $l=1$.

\paragraph{Overview of \cite{Koltai_2020} and comparison.}
In \cite{Koltai_2020} diffusion maps are used to obtain a two-dimensional embedding of the points $(z_t)_t$.
This embedding is approximately disk-shaped and physically meaningful in the sense that the radius and azimuthal coordinate of an embedded point reflect the amplitude $A_{\tn{m}}$ and orientation $\psi_{\tn{m}}$ of the SRS states and DRS states are clustered near the center of the disk.
The embedding does not yet consider information on the observed transitions $((x_i,y_i))_i$. This information is processed in a subsequent step by applying Ulam's method on the embedding, i.e.~the embedding space is partitioned into boxes and the transition rates between the boxes are estimated.
These rates represent a discretized regularized version of the transfer operator. Due to the high dimension of $\SpX$ it is not possible to apply Ulam's method directly on the original data.

This pipeline requires one to carefully choose several parameters.
The bandwidth for the diffusion maps must be selected (there are established procedures for this choice). Then, if suitable eigenvectors for a meaningful low-dimensional embedding can be identified, a box discretization scale for Ulam's method must be chosen. If the number of boxes is too high, the number of available samples might not suffice to robustly estimate all relevant transition rates. If the number of boxes is too low, many $(x_i,y_i)$ might end up in the same box and thus dynamical features of the system remain invisible. Changes and distortions in the embedding will directly influence the estimation of densities and rates.

As we will demonstrate below, with entropic transfer operators one can perform an analysis similar to the combination of diffusion maps with Ulam's method as in \cite{Koltai_2020} but simpler in the following sense:
the estimation of the transfer operator is performed directly on the original data (observed transitions between the temperature measurements, $((x_i,y_i))_i$), not on an intermediate embedding. No box discretization is necessary.
Only a single parameter (the entropic regularization $\veps$) must be set. Similar to the bandwidth in diffusion maps, this parameter has a transparent interpretation as a spatial blur scale, results are relatively robust with respect to small changes, and reasonable values can be determined via a preliminary exploratory analysis (see below). It is also feasible to perform the analysis at multiple scales.

A common computational bottleneck of the approach in \cite{Koltai_2020} and entropic transfer operators is the handling of large matrices of size $N \times N$, related to running Sinkhorn's algorithm to assemble the discrete entropic transfer operator, or to extract dominating eigenpairs from graph Laplacian or transfer matrices.
As a remedy, in \cite{Koltai_2020} the diffusion map embedding is only computed on a small subset of the samples and then extrapolated to the full dataset via an out-of-sample extension. The complexity of Ulam's method depends primarily on the number of boxes, which is much smaller than the number of available samples.
Below we will show that subsampling and out-of-sample extension can also be applied to entropic transfer operators. This is particularly useful to obtain a first understanding of the dataset and to determine an appropriate (range of) value(s) for $\veps$.
We will also demonstrate that with modern GPU hardware and suitable software it is now also possible to perform the full analysis on the whole dataset in reasonable time and without memory issues.

After obtaining a meaningful embedding and an estimate of the transfer operator, \cite{Koltai_2020} carefully discusses the physical interpretation of these results. Such an analysis is beyond the scope of the present article.

\paragraph{Exploratory analysis.}

\begin{figure}[bt]
  \centering
  \includegraphics[width=\textwidth]{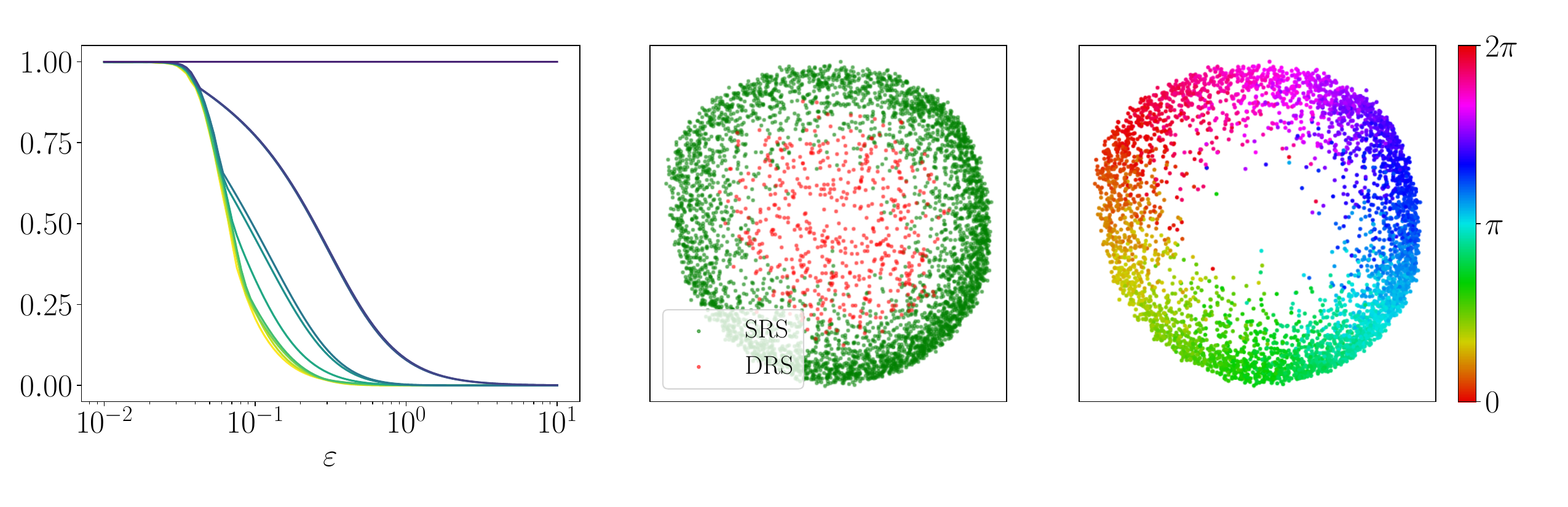}

  \caption{Left: 10 largest (in absolute value) eigenvalues of $T_N^\veps$ for different $\veps$ for $t_0=2000$, $s=60$, $l=1$. Middle and right: spectral embedding of points $(x_i)_i$ based on two sub-dominant eigenvectors $(u_2,u_3)$ of $T_N^\veps$ at $\veps=0.1$. Color represents SRS/DRS classification (middle) and SRS roll orientation $\psi_{\tn{m}}$ (right, only SRS states are shown). }
  \label{fig:embedding_nonrot}
\end{figure}

\begin{figure}[bt]
  \centering
  \includegraphics[width = 0.7\textwidth]{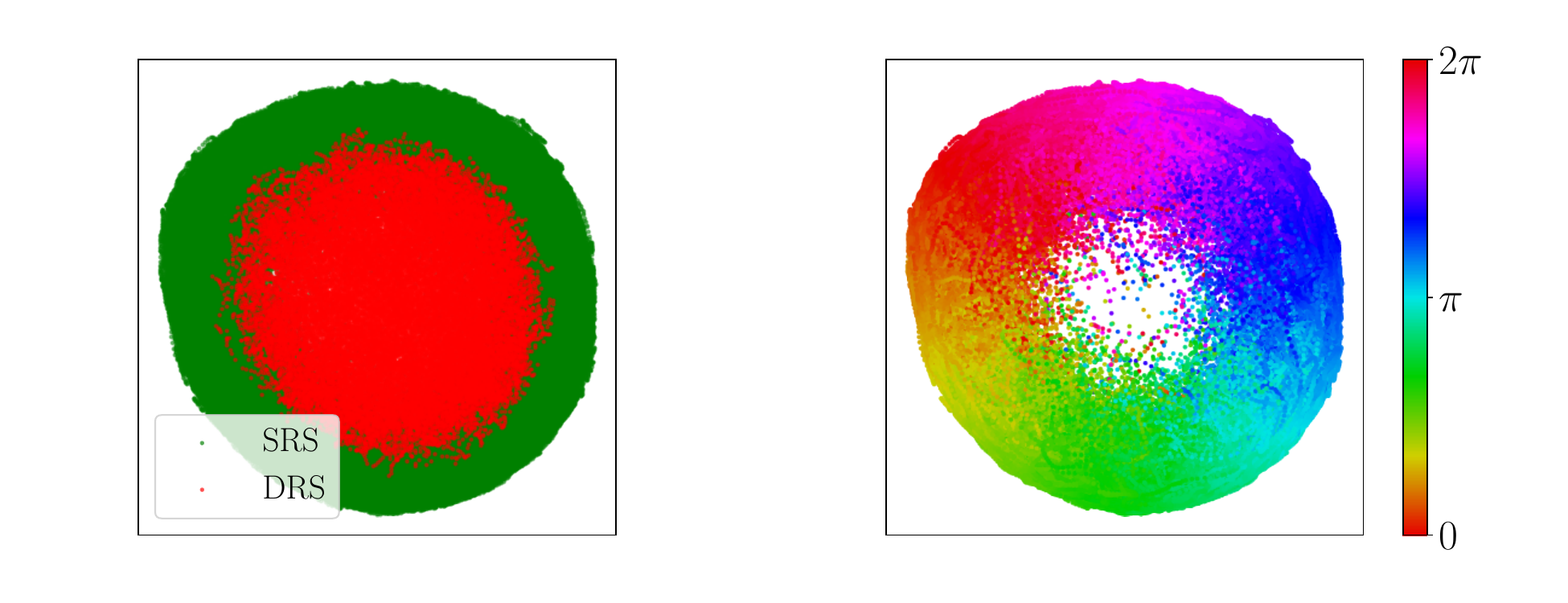}
  \caption{Out-of-sample extension of the embedding of Figure \ref{fig:embedding_nonrot} to the full dataset.} 
  \label{fig:embedding_nonrot_oos}
\end{figure}

We now perform a first exploratory analysis of the data by considering a small subset. We set $t_0=2000$, $s=60$, and $l=1$ in \eqref{eq:convection_xy_extraction}, resulting in $N=5007$ observed transitions.
On this small subset we compute $T^\veps_N$ (in the stationary variant) for various $\veps \in [10^{-2},1]$ and extract the 10 largest (in absolute value) eigenvalues $(\lambda_1,\ldots,\lambda_{10})$. We find that all these eigenvalues are real, consistent with an approximate diffusion with zero drift. The eigenvalues are shown in Figure \ref{fig:embedding_nonrot}, left.
As expected, $\lambda_1$ is always 1, corresponding to the equilibrium distribution. For $\veps=10^{-2}$ all extracted eigenvalues are approximately one, suggesting that at this small blur scale the system has many approximately disconnected subsystems. For $\veps=1$ the blur has reduced all non-dominant eigenvalues to close to zero. (See \cite[Sections 5 and 6.1]{EntropicTransfer22} for a detailed discussion on the effect of $\veps$ on the spectrum.)
The two largest sub-dominant eigenvalues $(\lambda_2,\lambda_3)$ are almost equal and they decay substantially slower than the rest.
We now fix $\veps=0.1$ where $\lambda_2 \approx \lambda_3 \approx 0.77$ are still somewhat close to 1, and well separated from the smaller eigenvalues ($\lambda_4 \approx 0.48$), and use the corresponding eigenfunctions $u_2,u_3$ for a spectral embedding (shown in Figure \ref{fig:embedding_nonrot}, middle and right).
Similar to \cite[Figure 4]{Koltai_2020} the embedding is disk shaped, with DRS states near the center, and SRS states on a ring around with the angle encoding the roll orientation.
Also similar to \cite[Figure 3]{Koltai_2020} the higher order eigenmodes seem to roughly correspond to those of a disk, with higher azimuthal and radial modes (not shown here). The obtained higher order modes are also roughly consistent with the modes for the transfer operator estimated via Ulam's method shown in \cite[Figures 8,10]{Koltai_2020}, but a precise correspondence is probably not to be expected due to the substantially different numerical strategy.
Using the out-of-sample extension described in Section \ref{subsec:out_of_sampling} we can then add the full dataset into the embedding. As shown in Figure \ref{fig:embedding_nonrot_oos} this extension is also consistent with the physical parameters of the states.

Similar to \cite[Figure 2]{Koltai_2020}, for smaller $\veps$ (e.g.~0.06) and small lag and stride in \eqref{eq:convection_xy_extraction}, some higher order modes would occasionally capture transient events, where the trajectory briefly departs quite far from the dominant disc structure.
For some values of $\veps$, stride $s$ and lag $l$ we found a few isolated points in the spectral embeddings, occasionally also captured as spurious eigenmodes. Upon closer inspection we found that in these points some of the temperature values were set to 10, which was caused by a faulty relay. In total, $17$ datapoints seem to be affected by this. Fortunately, such spurious eigenmodes are easy to identify, since the corresponding eigenvector will usually be approximately binary, with most values close to zero, and only a few isolated values being substantially non-zero. A related phenomenon was also reported in \cite[Section 6.2]{EntropicTransfer22} for isolated datapoints in regions where the attractor has a low density.

\paragraph{Large-scale computations.}

\begin{figure}[bt]
  \centering
  \includegraphics[width = \textwidth]{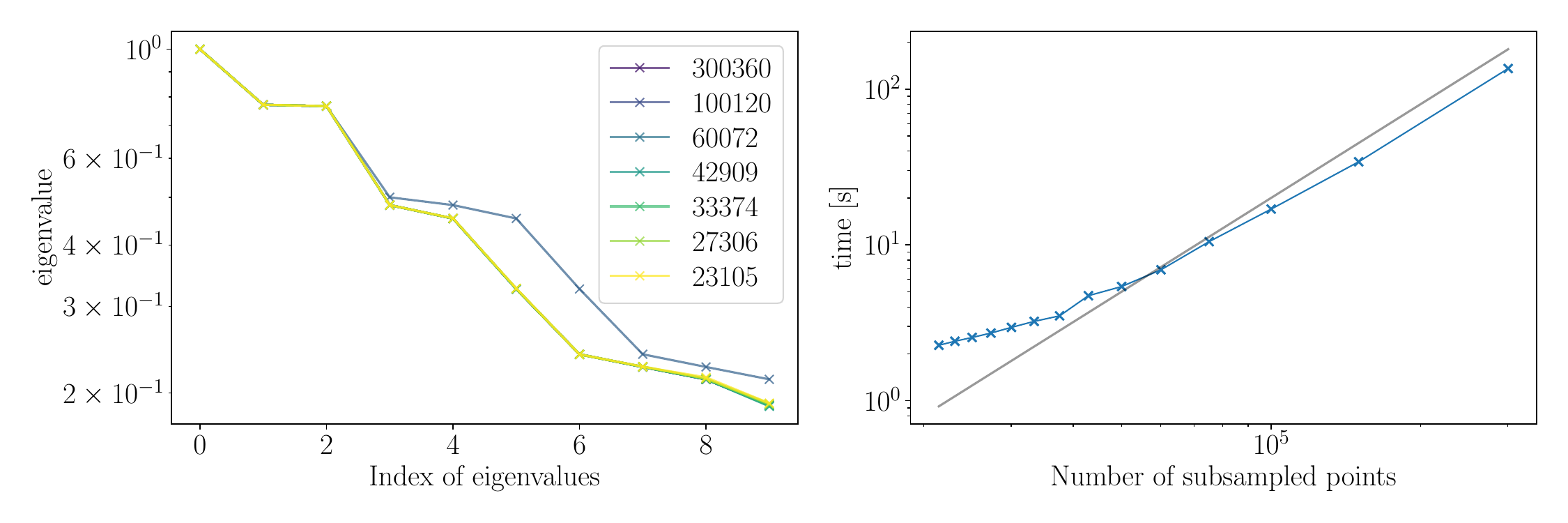}
  \caption{Left: 10 largest eigenvalues for different $N$ represented by color (obtained via $s \in \llbracket 1, 14\rrbracket$ in \eqref{eq:convection_xy_extraction}). All eigenvalues are real. Right: runtimes for the computation of these spectra. Grey line shows $O(N^2)$ trend line. See text for details.}
  \label{fig:embedding_nonrot_large}
\end{figure}

To reduce computational complexity, above we have only computed the entropic transfer operator and its dominant eigenfunctions for $N \approx 5000$ samples.
This allowed us to quickly extract the dominant spectrum for various $\veps$ to get an impression of the relevant length scales of the dataset, to subsequently obtain a reasonable embedding at an appropriate value for $\veps$, and to extend this embedding to the remaining datapoints in a meaningful way.
But to estimate the transfer operator itself, only a small fraction of data was used. It might be that a substantially more accurate picture could be obtained by using the full dataset.
In \cite{Koltai_2020} all samples were used to estimate the transfer operator in a second stage by applying Ulam's method to the diffusion map embedding. 
This was computationally tractable since only a relatively small number of boxes was be used for Ulam's method.
Of course it would be possible to apply the same strategy here.
However, with modern hard- and software the full dataset can also be tackled directly. GPUs are optimized for fast operations on matrices. 
For $N\approx 300\,000$, a matrix of size $N \times N$ in float32 precision would still occupy 360GB of memory and can therefore not be handled in a naive way. 
Fortunately, the matrices involved in representing $T_N^\veps = G_{\nu_N\nu_N}^\veps T_N G_{\mu_N\mu_N}^\veps$ have a very specific structure. 
For the canonical choice of basis on $\Leb^2(\mu_N)$ given by functions $u_i(x_j)=\sqrt{N}\delta_{ij}$ (and likewise on $\Leb^2(\nu_N)$), $T_N$ is represented by the identity matrix and the entropic transport matrices have a structure according to \eqref{eq:EntropicPD} with $c$ in turn being a simple function of the arrays of coordinates $(x_i)_i$ and $(y_i)_i$. 
Such structures can be represented as lazy tensors, e.g.~in the KeOps library \cite{Keops}. 
Their memory footprint only scales linearly in $N$, they can be used efficiently in GPU matrix operations, and they can be efficiently interfaced with the sparse eigenfunction extraction routines of scipy. 
This approach also does not rely on coarse-to-fine strategies as described in \cite{SchmitzerScaling2019} which only work in low dimensions, and it will scale without issues at least to dimensions on the order of 100.
In this way we were able to extract the 10 dominant eigenfunctions of $T_N^\veps$ on the whole dataset ($N=300\,361$) in less than 8 minutes on a MIG 2g.10g partition of an NVIDIA A100-SXM4-40GB GPU. 
(Of course the precise runtime will depend on the available hardware and the numerical precision. 
We ran 20 Sinkhorn iterations per transport kernel, which resulted in relative $\Leb^1$ marginal errors of approximately $10^{-4}$.)
The dominant spectra for various $N$ are shown in Figure \ref{fig:embedding_nonrot_large}, left. 
We observe that the first seven eigenvalues are virtually identical for all $N \geq 5007$ (with the exception of one spurious eigenvalue appearing for $N=60\,073$, see previous paragraph), suggesting a fast and robust convergence of the dominant part of the spectrum, even though the dimension of $\SpX$ is 24.
The runtime seems to scale approximately quadratic in $N$ (indicating that the number of matrix multiplications remained constant with respect to $N$), see Figure \ref{fig:embedding_nonrot_large}, right.
The last two paragraphs indicate that a first robust analysis of the dataset can be performed efficiently on a small subset, and it is also feasible to perform a more complete analysis with the appropriate numerical tools.

\paragraph{Analysis of the rotating tank.}
\begin{figure}[bt]
  \centering
  \includegraphics[width = 0.8\textwidth]{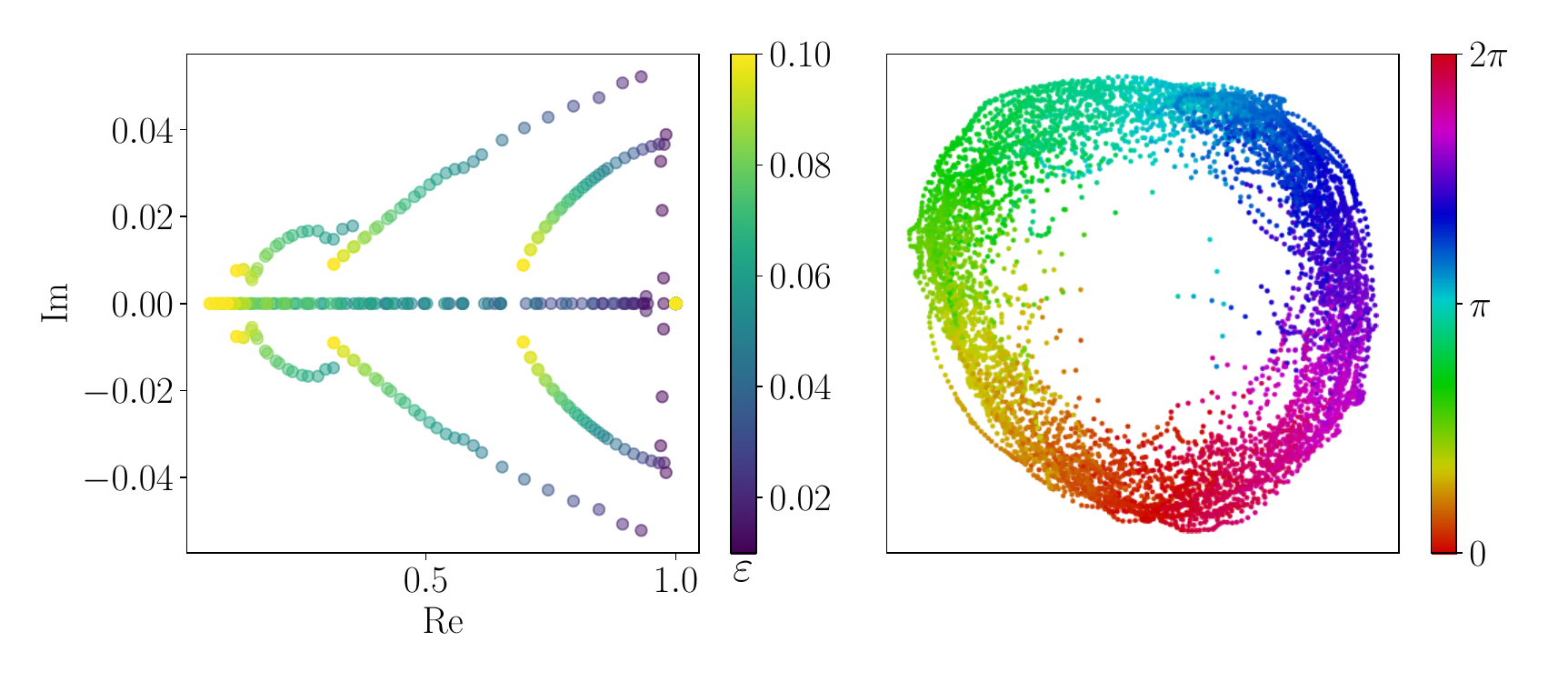}
  \caption{Left: 10 largest eigenvalues for the rotating experiment, color stands for different $\veps$.  Right: spectral embedding based on real and imaginary part of $u_2$ for $\veps=0.1$, with color encoding the roll orientation $\psi_{\tn{m}}$.}
  \label{fig:embedding_rot}
\end{figure}

\begin{figure}[bt]
  \centering
  \includegraphics[width = 0.8\textwidth]{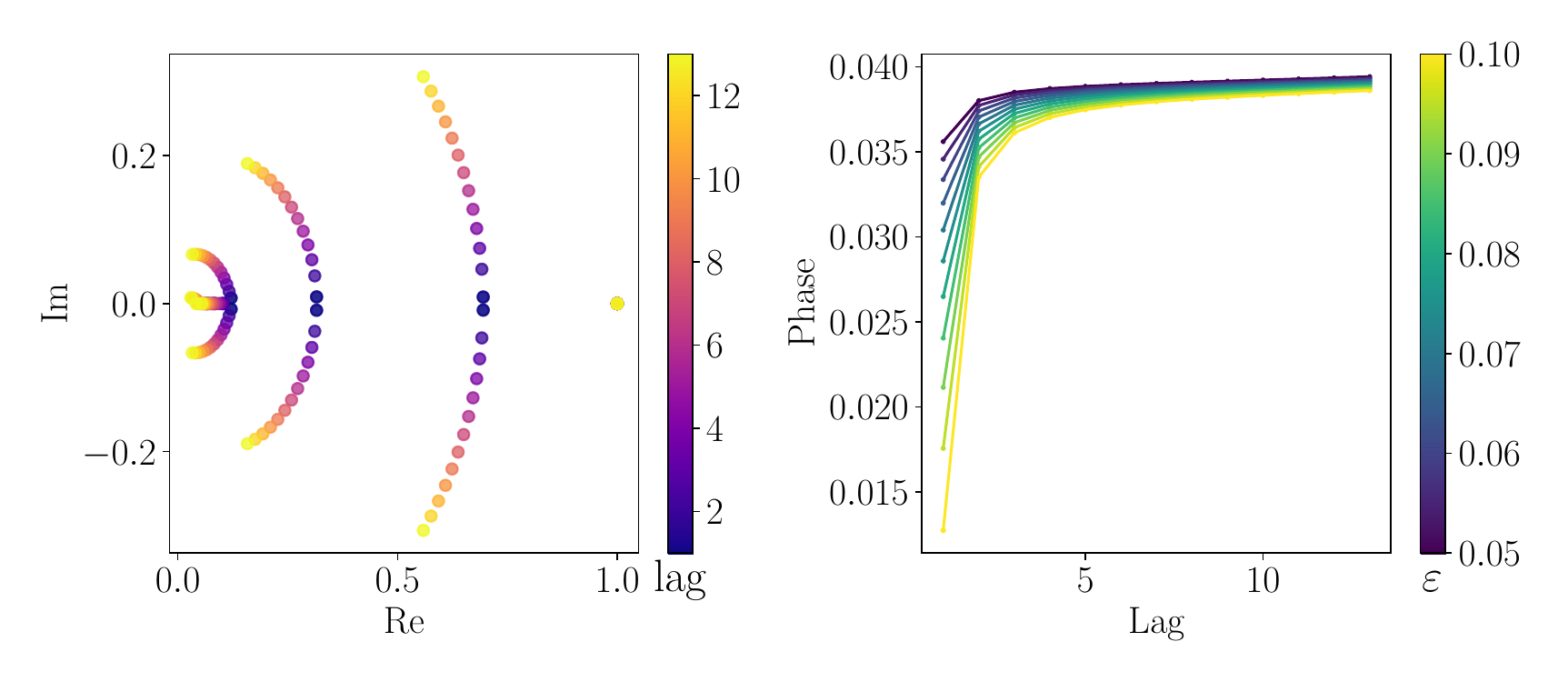}
  \caption{Left: 10 largest eigenvalues for the rotating experiment when $\veps = 0.1, \ s = 1$, and lag $l \in \{1,\ldots,13\}$ (encoded as color). Right: Phase $\arg(\lambda_2^l)/l$ when $s = 1$, for different $\veps$ (encoded as color) and $l \in \{1,\ldots,13\}$ . Color encodes the value used for the regularization parameter $\veps$. The bias decreases with decreasing $\veps$ and increasing $l$.}
  \label{fig:embedding_rot_phase}
\end{figure}

Finally, we consider a dataset of a second experiment (also studied in \cite{Koltai_2020}), where the cylinder was rotated with angular velocity $\omega_{\tn{tank}}=0.88 \tn{rad}/\tn{s}$ (at a temperature difference of $15.9$\textdegree{}C). 
Measurements were again taken approximately with constant frequency of one per $3.4\tn{s}$.
Due to the Coriolis force one expects a relative rotation between the water roll and the tank wall. 
Indeed, fitting cosine profiles and extracting the phase as in \eqref{eq:convection_fit}, we obtain mean and median velocities of $\omega_{\tn{roll,mean}}=0.013 \tn{rad}/\tn{s}$ and $\omega_{\tn{roll,median}}=0.011 \tn{rad}/\tn{s}$ for the middle layer phase $\psi_{\tn{m}}$.
The discrepancy between mean and median is due to asymmetry in the distribution. One reason for this asymmetry is that the drift is linked to the SRS, which occasionally briefly disappears.
On this level of precision the values are consistent with a mean drift of $0.012 \tn{rad}/\tn{s}$ obtained in \cite{Koltai_2020} by a more careful analysis.
Discarding once more the first $2000$ measurements, a total of 7279 datapoints was available, which can easily be analysed in full (i.e.~we set $t_0=2000$, $s=1$ in \eqref{eq:convection_fit}, different values for $\veps$ and $l$ will be used below).

The dominating spectra for lag $l=1$ and various $\veps$ are shown in Figure \ref{fig:embedding_rot}, left. 
Due to the rotation we now expect a non-zero drift of the roll orientation, which is reflected by non-real eigenvalues. 
Since entries of $T_N^\veps$ are real, non-real eigenvalues appear in conjugate pairs.
A spectral embedding based on the real and imaginary part of the eigenvector $u_2$ is shown in Figure \ref{fig:embedding_rot}, right.
The embedding is roughly ring shaped with only few samples lying near the center. 
As before, the angle is in good correspondence with the roll orientation $\psi_{\tn{m}}$. 
We find that $u_{\{3,4\}}$ and $u_{\{5,6\}}$ correspond roughly to higher order Fourier modes on the ring (not shown). 
Hence, the system appears to behave approximately like a stochastic shift on a torus, see Section \ref{subsec:num_1d} and \cite[Section 5]{EntropicTransfer22}.

We observe in Figure \ref{fig:embedding_rot}, left, that the phase of the subdominant eigenvalue $\lambda_2$ of $T_N^\veps$ depends on the regularization $\veps$. 
Apparently the regularization adds some bias towards smaller phases. 
This bias can be reduced by decreasing $\veps$, but this cannot be done arbitrarily, since eventually discretization artefacts emerge \cite[Section 5]{EntropicTransfer22}.
Alternatively one may increase the lag $l$. Let $\lambda_2^l$ be the sub-dominant eigenvalue for the choice $l$. 
It corresponds to transitions over $l$ discrete time steps, so by increasing $l$ the movement of the system becomes larger compared to the regularization strength. 
One might then expect that $\arg(\lambda_2^l)/l$ yields a more robust estimate of the phase of the eigenvalue $\lambda_2=\lambda_2^{l=1}$. 
This is confirmed in Figure \ref{fig:embedding_rot_phase}. 
The phase for the subdominant eigenvalue approaches $\approx 0.039 \tn{rad}$, which corresponds to a phase velocity of $0.011 \tn{rad}/\tn{s}$ (based on the time delta $3.4\tn{s}$ between measurements), consistent with the above estimates for the angular velocity based on the direct estimation of the roll phase drift.
This demonstrates that entropic transfer operators are able to extract information on the dominant features of a dynamical system. 
They can also be applied in scenarios where a direct extraction of meaningful features is not as obvious as in the case of roll orientation.

\section*{Acknowledgements}
This work was supported by the German Research Foundation (DFG) through the CRC 1456, `Mathematics
of Experiment', projects A03 (CS and BS) and C06 (HB and BS), project SCHM 3462/3-1 `Entropic transfer operators for data-driven analysis of dynamical systems' (BS and TS) and the Emmy Noether-Programme (BS).

\bibliographystyle{plain}
\bibliography{ref.bib}

\end{document}